\definecolor{dblue}{rgb}{0.09,0.32,0.44} %22-84-113
\newtheorem {theorem}{Theorem}
\newtheorem {lemma}{Lemma}
\newtheorem {corollary}{Corollary}
\newtheorem {proposition}{Proposition}
\newtheorem {conjecture}{Conjecture}
\newtheorem {remark}{Remark}
\def \B {\mathbb B}
\def \E {\mathbb E}
\def \K {\mathbb K}
\def \N {\mathbb N}
\def \P {\mathbb P}
\def \R {\mathbb R}
\def \T {\mathbb T}
\def \Z {\mathbb Z}
\def\cC{\mathcal{C}}
\def\cF{\mathcal{F}}
\def\cG{\mathcal{G}}
\def\cL{\mathcal{L}}
\def\vareps{\varepsilon}
\def\va{\mathbf{a}}
\def\vl{\mathbf{l}}
\def\vp{\mathbf{p}}
\def\vx{\mathbf{x}}
\def\vy{\mathbf{y}}
\newcommand{\probab}[1]{\ensuremath{\mathbf{P}\big(#1\big)}}
\newcommand{\expect}[1]{\ensuremath{\mathbf{E}\big(#1\big)}}
\newcommand{\condprobab}[2]{\ensuremath{\mathbf{P}\big(#1\bigm|#2\big)}}
\newcommand{\condexpect}[2]{\ensuremath{\mathbf{E}\big(#1\bigm|#2\big)}}
\newcommand{\condvar}[2]{\ensuremath{\mathbf{Var}\big(#1\bigm|#2\big)}}
\newcommand{\condcov}[3]{\ensuremath{\mathbf{Cov}\big(#1,#2\bigm|#3\big)}}
\newcommand{\probabz}[1]{\ensuremath{\mathbf{P}_{0}\left(#1\right)}}
\newcommand{\expectz}[1]{\ensuremath{\mathbf{E}_{0}\left(#1\right)}}
\def\clap#1{\hbox to 0pt{\hss#1\hss}}
\def\one{\ensuremath\mathbbm{1}}
\newcommand{\abs}[1]{\ensuremath\left|{#1}\right|}
\def \wt {\widetilde}
\def\wh{\widehat}
\newcommand{\lb}{\left(}
\newcommand{\rb}{\right)}
\newcommand{\lbr}{\left\{}
\newcommand{\rbr}{\right\}}
\newcommand{\be}[1]{\begin{equation}\label{#1}}
\newcommand{\ee}{\end{equation}}
\newcommand{\case}[1]{C{\small ASE}\,#1.}
\date{\today}
\title
{Split-and-Merge in Stationary Random Stirring on  Lattice 
Torus}
\author{
{\sc Dmitry Ioffe$^*$ and B\'alint T\'oth$^{\dagger,\$}$}
\\[8pt]
$^{*}$Technion -- Israel Institute of Technology, IL
\\
$^{\dagger}$University of Bristol, UK
\\
$^{\$}$R\'enyi Institute, Budapest, HU
}
\begin{document}

\maketitle

\begin{center}
{\em 
{
We dedicate this paper to Joel Lebowitz on the occasion of his 90th birthday 
\\
with deep respect for his scientific and moral accomplishment.
}
}
\end{center}

\begin{abstract}
{
We show that in any dimension $d\ge1$, the cycle-length process of stationary random stirring (or, random interchange) on the lattice torus converges to the canonical  Markovian \emph{split-and-merge} process with the invariant (and reversible) measure given by the Poisson-Dirichlet law $\mathsf{PD(1)}$, as the size of the system grows to infinity. In the case of transient dimensions, $d\ge 3$, the problem is motivated by attempts to understand the onset of long range order in quantum Heisenberg models via random loop representations of the latter.
}	
\end{abstract}

\section{Introduction and Result}

{
\subsection{General introduction}
\label{sub: General introduction}

{
Representations of { the} Bose gas in terms of random permutations date back to the classic \cite{feynman-53}, where the Feynman-Kac approach was first used in the context of quantum statistical physics. Since, due to Holstein-Primakoff transformations, quantum spin systems are reformulated as { the} lattice Bose gas with interactions, the Feynman-Kac approach can be transferred to the quantum Heisenberg models, too. An early version of representation of the spin-$\frac12$ quantum Heisenberg ferromagnet in terms of random permutations appears in the unjustly forgotten paper \cite{powers-76}. 

It looks like the stochastic permutation (or, random loop) approach to { the} Bose gas and quantum spin systems, based on Feynman-Kac, became main stream objects in mathematically rigorous quantum statistical physics and probability in the early nineties, with independent and essentially parallel works  where the Bose gas in continuum space \cite{suto-93}, the quantum Heisenberg ferromagnet on $\Z^d$ \cite{conlon-solovej-91, toth-91, toth-93}, and the quantum Heisenberg antiferromagnet in $\Z^1$ \cite{aizenman-nachtergaele-94}, had been considered, via random loop representations. 
{The latter paper contains a derivation of a general, 
	Poisson processes based,  functional integral  representations  of quantum spin states 
on finite graphs.}
We refer to \cite{ioffe-09} for a more recent  exposition of this general approach. \\

The random stirring (a.k.a. random interchange) process on a finite connected graph is a process of random permutations of its vertex-labels where elementary swaps are appended according to independent Poisson flows of rate one on unoriented edges. The process was first introduced by T. E. Harris, in \cite{harris-72} and since then, due to its manifold relevance and intrinsic beauty,  has been the object of abundantly many research papers. In particular, it turned out that the asymptotics of the cycle structure dynamics of random stirring on the $d$-dimensional discrete tori $\T^N$, as $N\to\infty$, is of paramount importance for understanding the emergence of so-called off-diagonal long range order in the spin-$\frac12$ isotropic quantum Heisenberg ferromagnet (for dimensions $d\ge3$) -- a Holy Grail of mathematically rigorous quantum statistical physics. For details, see \cite{toth-93} or the surveys \cite{ueltschi-11, ueltschi-13}. 
% We shall briefly return to this point in concluding section~\ref{sub:HM} of the introduction. 

The main and best known conjecture in this context (see \cite{toth-93}) states that, for dimensions $d\ge3$, there exists a positive and finite critical time $\beta_c=\beta_c(d)$ beyond which cycles of macroscopic size of the random stirring emerge. 
For { a} precise formulation see Conjecture~\ref{con:toth} in section~\ref{sub:conjectures} below. 

{
Note that in the Feynman-Kac (a.k.a. imaginary time) setting the time parameter corresponds to inverse temperature. Accordingly, the critical value of time, $\beta_c$, corresponds,  in physical terms, to critical inverse temperature. This is reflected by our choice of notation.
}

Inspired by the exhaustive analysis of the Curie-Weiss mean field version of the problem by Schramm, cf \cite{schramm-05}, and supported by numerical evidence, a refinement of this conjecture (see \cite{ueltschi-11}) claims that beyond the critical time ${ \beta_c}$, the macroscopically scaled cycle lengths converge in distribution to the Poisson-Dirichlet law $\mathsf{PD(1)}$. For { a} precise formulation see Conjecture~\ref{con:ueltschi} in section~\ref{sub:conjectures} below.

The work presented in this note is primarily motivated by the following further refinement of the above conjectures. On the time scale of the random stirring process, due to the macroscopic number of edges connecting different cycles of macroscopic size, respectively, connecting different sites on the same cycle of macroscopic size,  the cycle structure of the permutation changes very fast. However, looking at a time-window of inverse macroscopic order  around  a fixed time ${\tau>\beta_c}$ and slowing down the time scale accordingly, we expect to see the cycles join and break up like in the canonical split-and-merge process. Somewhat refining Schramm's arguments,  \cite{schramm-05}, this can be proven in the Curie-Weiss mean field setup. In the $d$-dimensional setup, however, this seems to be a serious challenge, formulated as Conjecture~\ref{con:our} in section~\ref{sub:conjectures} below. The point is that in this scaling limit the underlying $d$-dimensional geometry is smeared out by the (expected) close-to-uniform spreading of the various macroscopic cycles. 

The main result of this note is formulated in Theorem~\ref{thm:coupl} and its Corollary~\ref{cor:weaklimit} in section~\ref{sub:result}, which settles Conjecture~\ref{con:our} for ${ \tau=\infty}$. That is, we prove that in the stationary regime of random stirring on $\T^N$, indeed, the appropriately rescaled and slowed down cycle-length process converges in distribution  to the canonical split-and-merge process, which has $\mathsf{PD(1)}$ as its unique stationary (and also reversible) law. 

}
\subsection{Notation}

Let $\Omega$ be the set of ordered partitions of $1$, 
\begin{align*}
\Omega
:= 
\left\{ 
\vp = (p_i)_{i\ge1}:
p_i\in[0,1], \ \ \  p_1 \geq p_2 \geq\dots\geq 0, \ \ \  \sum_{i} p_i =1
\right\}
\end{align*}
endowed with the $\ell^1$-metric 
\begin{align}
\label{ellone}
d(\vp, \vp^{\prime}):=\sum_i\abs{p_i-p^{\prime}_i}, 
\end{align}
which makes $\Omega$ a complete separable metric space. 

Given $N\in\N$, let $\Sigma^{N}$ be the symmetric group of all permutations of $\{1,\dots, N\}$ and 
\begin{align}
\notag
\Omega^{N}
:= 
&
\left\{ 
\vl=(l_i)_{i\ge1}: l_i\in\N, \ \ \ l_1\ge l_2\ge \dots \ge 0, \ \ \ \sum_{1}  l_i =N
\right\}
\\ 
\label{OmN}
= 
&
\left\{ 
\va=(a_k)_{k\ge1}: a_k\in\N, \ \ \ \sum_{k}  k a_k =N
\right\}.
\end{align}
The identification between the two representations of $\Omega^{N}$ is done through the formulas
\begin{align*}
a_k=\#\{i: l_i = k\}, 
\qquad
l_i=\max\{k: \sum_{k^\prime\geq k} a_{k^\prime}\geq i\}. 
\end{align*}
We embed naturally $\Omega^{N}\subset\Omega$ as 
\begin{align}
\label{OmNalt}
\Omega^{N}
= 
&
\left\{ 
\vp\in\Omega: p_i=\frac{l_i}{N}, \ \ \ l_i\in\N, \ \ \ l_1\ge l_2\ge \dots \ge 0, \ \ \ \sum_{1}  l_i =N 
\right\}, 
\end{align}
The three representations in \eqref{OmN} and \eqref{OmNalt} are naturally identified as three encodings of the same set $\Omega^{N}$. We will think about them as being the same and will use the three representations freely interchangeably. 

Given $\sigma\in\Sigma^{N}$ denote by $\cC(\sigma)=(\cC_i(\sigma))_{i\geq1}$ the cycle decomposition of the permutation $\sigma$, listed in decreasing  order of their sizes, so that in case of ties the order of cycles is given by the decreasing lexicographic order of their largest element. The cycle lengths of the permutation $\sigma\in\Sigma^{N}$ are encoded in the three (equivalent) maps: $\vl, \va, \vp: \Sigma^{N}\to\Omega^{N}$
\begin{align*}
l_i(\sigma)
:= 
\abs{\cC_i(\sigma)};
\qquad
a_i(\sigma)
:=
\#\{k:\abs{\cC_k(\sigma)}=i\};
\qquad
p_i(\sigma)
:=
\frac{\abs{\cC_i(\sigma)}}{N}. 
\end{align*}  
Let $\mu^{N}$ be the uniform distribution on $\Sigma^{N}$ and $\pi^{N}$ the probability distribution (on $\Omega^{N}$
) of the ordered cycle lengths of a uniformly sampled permutation from $\Sigma^{N}$: 
\begin{align*}
\pi^{N} (\vl)
:=
\mu^{N}(\sigma: \vl(\sigma)=\vl),
\quad
\pi^{N} (\va)
:=
\mu^{N}(\sigma: \va(\sigma)=\va),
\quad
\pi^{N} (\vp)
:=
\mu^{N}(\sigma: \vp(\sigma)=\vp).
\end{align*}
By Ewens's formula (see e.g.  \cite{arratia-barbour-tavare-03}) we have 
\begin{align}
\label{Ewens}
\pi^{N} (\va)
=
\left( \prod_{j}j^{a_j} a_j!\right)^{-1}, 
\end{align}
which transfers to $\pi^{N}(\vl)$ and $\pi^{N}(\vp)$ by the one-to-one identification of the three representations of $\Omega^{N}$ on the right hand side of \eqref{OmN}, \eqref{OmNalt}. Considering $\Omega^{N}$ as embedded in $\Omega$ (see \eqref{OmNalt}) the sequence of probability measures $\pi^{N}$ converges weakly to 

\noindent 
{\bf The Poisson-Dirichlet measure}  $\pi$ of parameter $\theta=1$ on $\Omega$. This is the distribution of the decreasingly ordered sequence $(\xi_j)_{j\geq 1}$, where 
\begin{align}
\label{xi-representation}
\xi_{j}:= 
\frac {\chi_j}{\sum_{k\geq1}\chi_k}, 
\qquad
(\chi_j)_{j\geq1}
 \sim 
 { \mathsf{PPP}}
 (m(dt)), 
\qquad
m(dt)= t^{-1}e^{-t}dt. 
\end{align}
Above 
{ $\mathsf{PPP}$} 
stands for  Poisson Point Process. See e.g.  Section~7 in \cite{ueltschi-11} for a concise exposition. 
{ We will also refer { to} the Poisson-Dirichlet law of parameter $\theta=1$, as $\mathsf{PD(1)}$.}

\subsection{Random Stirring on the $d$-dimensional torus}

The dimension $d$ will be fixed for ever in this note, and therefore it will not appear explicitly in notation. For $n\in \N$ and $N = n^d$ let  $\T^{N}:= (\Z/n)^d$ be the $d$-dimensional lattice torus of linear size $n$ and, accordingly, of volume $N$, and  $\B^{N}$ the set of nearest neighbour unoriented edges $\mathsf{b}$ of $\T^{N}$. We think about the vertices of the graph $\T^{N}$ as being listed in a fixed lexicographic order.

The random stirring (or, random transposition) process on $\T^{N}$ is the continuous time Markov process $t\mapsto\tilde \eta^{N}(t)$ on the { finite} state-space $\Sigma^{N}$, generated by independent Poisson flows (of rate one) of elementary transpositions $\tau_\mathsf{b}$ along the unoriented edges $\mathsf{b}\in\B^{N}$. Its infinitesimal generator, acting on functions $f:\Sigma^{N}\to\R$,  is 
\begin{align*}
\cL^{N}  
f (\sigma) 
= 
\sum_{\sf{b}\in \B^{N}} 
\left( f (\tau_\mathsf{b} \sigma) - f (\sigma)\right).
\end{align*}
The uniform distribution of permutations,  $\mu^{N}$, is the unique invariant measure of the Markov process $t\mapsto \eta^{N}(t)$ which is also reversible under this measure. 

In the sequel we shall work with appropriately rescaled (slowed down time) version $\eta^N$ of $\tilde\eta^N$, 
\begin{equation*}
\eta^N (t ) = \tilde\eta^N \lb \frac{t}{Nd}\rb . 
\end{equation*}
By construction  $\eta^N$ has unit total jump rates at any $\sigma\in \Sigma^N$.
We will consider the stationary process $t\mapsto\eta^{N}(t)$, with one-dimensional marginal distributions $\mu^{N}$.  
\smallskip 
%}

\noindent
{\bf The process $\xi^N$.}
The main object of our note is the process of normalized and ordered cycle lengths of the 
{stationary} 
random stirring $\eta^{N}(t)$,  
\begin{align}
\label{xiN}
\xi^{N}(t):=\vp(\eta^{N}(t))
\end{align}
The process $t\mapsto \xi^{N}(t)$ takes values in $\Omega^{N}$ and it is stationary, with one dimensional marginals $\pi^{N}$, cf \eqref{Ewens}. However, it is by no means Markovian. As long as $N$ is finite, it reflects the geometry of the graph $\T^{N}$. Our result, Theorem \ref{thm:coupl} states, however, that, as $N\to\infty$, the process $\xi^{N}(t)$ stays close in distribution to a reversible Markovian coagulation-fragmentation process $t\mapsto \zeta^{N}(t)\in\Omega^{N}$ defined in the next subsection. Thus the process $\xi^{N}(t)$ inherits from its Markovian sibling $\zeta^{N}(t)$ the weak convergence to the canonical split-and-merge process $t\mapsto \zeta(t)\in\Omega$, also defined below. 

\subsection{Split-and-Merge}

The canonical split-and-merge process is a continuous time coagulation-fragmentation Markov process $t\mapsto \zeta(t)\in\Omega$ whose instantaneous jumps are either mergers of two different partition elements of size $p^{\prime}$ and $p^{\prime\prime}$ into one element of size $p^{\prime}+p^{\prime\prime}$ happening with rate $p^{\prime}p^{\prime\prime}$, or splitting of a partition element of size $p$ into two parts of sizes $p^{\prime}$ and $p^{\prime\prime}=p-p^{\prime}$, uniformly distributed in $[0,p]$, with rate $p^2$. Note, that the total rate of coagulation \emph{and} fragmentation events is exactly $1$. The { action of the} infinitesimal generator of the process { on bounded continuous}  $f:\Omega\to\R$,  is 
\begin{equation*}
\cG f (\vp) 
= 
2
\sum_{ i< j}p_i p_j 
\left( f (\mathsf{M}_{ij}\vp) - f (\vp )\right) 
+ 
\sum_i p_i^2 
\int_0^1 
\left( f (\mathsf{S}_i^u \vp ) - f (\vp )\right) du, 
\end{equation*}   
where, for $1 \leq i<j$,  the map $\mathsf{M}_{ij}:\Omega\to\Omega$ merges the partition elements $p_i$ and $p_j$ into one of size $p_i+p_j$, and  subsequently  rearranges the partition elements in decreasing order, whereas, for $1 \leq i$ and $u\in[0,1)$,  the map $\mathsf{S}_i^u:\Omega\to\Omega$ splits the partition element $p_i$ into two pieces of size $up_i$, respectively, $(1-u)p_i$  and  subsequently  rearranges the partition elements in decreasing order. 
{ Since the total rate of mergers and splittings is 
\begin{align*}
2
\sum_{ i< j}p_i p_j 
+ 
\sum_i p_i^2 
=1, 
\end{align*}
there is no technical issue with the path-wise construction of this process or with the identification of the domain of definition of its infinitesimal generator $\cG$.}
This canonical process is much studied and well understood. In particular, it is a known fact  -- see \cite{mayerwolf-zeitouni-zerner-02}, \cite{diaconis-mayerwolf-zeitouni-zerner-04} --   that the Poisson-Dirichlet measure $\pi$ on $\Omega$ is the unique stationary measure for the process $t\mapsto\zeta(t)$ which is also reversible under this measure. 
\smallskip

\noindent
{\bf The process $\zeta^N$}.
Given $N\in\N$, we define the { finite state space}  Markov process $t\mapsto\zeta^{N}(t)\in\Omega^{N}$ as a discrete (in space) approximation of $t\mapsto\zeta(t)\in\Omega$. It is the coagulation-fragmentation process of partition elements of size $k/N$, $k\in\{1,\dots,N\}$, where elements of size $k^{\prime}/N$ and $k^{\prime\prime}/N$ merge into an element of size  $(k^{\prime}+k^{\prime\prime})/N$ with rate $k^{\prime}k^{\prime\prime}/(N(N-1))$ and a partition element of size $k/N$ splits into two elements of sizes $k^{\prime}/N$ and $k^{\prime\prime}/N$, with
 { $k^{\prime} =1, \dots , k-1$ and}  $k^{\prime\prime}=k-k^{\prime}$, with rate $k/(N(N-1))$. Its infinitesimal generator, acting on functions $f:\Omega^{N}\to\R$, is 
\begin{equation}
\label{GN}
\begin{split}
\cG^{N} f (\vp)
=
&
\phantom{ + }
\frac{2 N}{N-1}  
\sum_{i< j} p_i p_j 
\left( f(\mathsf{M}_{ij}\vp) - f(\vp)\right) 
\\ 
&
+
{ 
\frac{1}{N-1}
\sum_{i} 
p_i
%\frac{1}{Np_i}
\sum_{k=1}^{Np_i-1} \left( f (\mathsf{S}_i^{k/(Np_i)} \vp) - f(\vp)\right)
}
\\ 
&{
:=  
\sum_{i< j} U^N_{{i,j}} (\vp)
\left( f(\mathsf{M}_{ij}\vp) - f(\vp)\right) + 
\sum_{j} 
\sum_{k=1} V_{j,k}^N (\vp )\left( f (\mathsf{S}_j^{k/(Np_j)} \vp) - f(\vp)\right)
}.
\end{split}
\end{equation}
 For future reference let us record the exact expressions for the jump rates above as
 \smallskip 
 
 \noindent
{\bf Mean-field (see Remark~\ref{rem:MF} below) jump rates.}
{ $U^{N}_{i,j}, V^{N}_{j,k}: \Omega^N\to [0,1]$, }
\be{eq:UVNrates} 
U^{N}_{i,j}(\vp )
:=
\frac{2N\one_{\{i<j\}}}{N-1} 
p_i p_j, 
\quad{\rm and}\quad
V^{N}_{j,k}(\vp )
:=
\frac{1}{N-1} p_j \one_{\{ 1\leq k< Np_j \}}.
\ee
Note that the total rate of mergers and splittings of the process $\zeta^{N}$  is also exactly 1. Indeed, 
\begin{align*}
\frac{2N}{N-1}\sum_{i<j}p_ip_j + \frac{1}{N-1}
{ \sum_i}\, 
p_i (Np_i-1) 
=1, 
\end{align*} 
which is just the combinatorial identity for the complete  probability of
	sampling two integers from $\lbr 1, \dots , N\rbr$ without replacement. 
{The process $\zeta^N$ with generator \eqref{GN}}  is also well understood and, in particular, it is known that Ewens's measure $\pi^{N}$ of \eqref{Ewens} is its (unique) stationary and reversible distribution
\cite{mayerwolf-zeitouni-zerner-02, diaconis-mayerwolf-zeitouni-zerner-04}.  

\begin{remark} 
	\label{rem:MF}
The process $t\mapsto \zeta^{N}(t)$ is actually the cycle length process of Curie-Weiss mean field random  stirrings. That is, 
{
\begin{align*}
\zeta^{N}(t)
= 
\vp\lb \wt\nu^{N}\lb \frac{2t}{N (N-1)}\rb \rb, 
\end{align*}
where $t\mapsto \wt\nu^{N}(t)$ is the 
{stationary} 
random stirring process on the complete graph $\K^{N}$ with unit stirring rate per unoriented edge.} However, this representation of the process $t\mapsto\zeta^{N}(t)$ will not be used later in this note.  
\end{remark} 

It is a well established fact -- see \cite{mayerwolf-zeitouni-zerner-02}, \cite{diaconis-mayerwolf-zeitouni-zerner-04} --  that, on any compact time interval $t\in[0,T]$,  the sequence of processes $t\mapsto\zeta^{N}(t)$  converges in distribution to the process  $t\mapsto\zeta(t)$, 
{as $N\to\infty$,}
in $\Omega$ endowed with the $\ell^1$-metric \eqref{ellone}.

\subsection{Result} 
\label{sub:result}

The results reported in this note are the following. 
 
\begin{theorem} 
\label{thm:coupl}
Let $d$ be fixed and $N=n^d$, $n \in \N$.  There exists a sequence $N\mapsto T^{*}(N)$ with $\lim_{N\to\infty}T^{*}(N)=\infty$ and a coupling (that is: joint realization on the same probability space) of the \emph{stationary} processes $t\mapsto \eta^{N}(t)$ and  $t\mapsto \zeta^{N}(t)$, with $\eta^{N}(0)\sim \mu^{N}$ and $\zeta^{N}(0)=\xi^{N}(0)$,  such that for any $\delta>0$
\begin{align}
\label{coupl}
\lim_{N\to\infty}
\probab{\max_{0\le t\le T^{*}(N)} d(\xi^{N}(t), \zeta^{N}(t))>\delta}
=0. 
\end{align}
\end{theorem}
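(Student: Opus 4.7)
The plan is to construct an explicit coupling of the stationary processes $\eta^{N}$ and $\zeta^{N}$ driven by a common Poisson mechanism, exploiting the elementary but crucial fact that under the stationary measure $\mu^{N}$ the infinitesimal rates governing $\xi^{N}$ coincide \emph{in expectation} with those of $\cG^{N}$. Indeed, conditionally on $\xi^{N}(t)=\vp$ the permutation $\eta^{N}(t)$ is uniform among permutations with cycle profile $\vp$, so the cycles are embedded in $\T^{N}$ uniformly at random among all placements of the prescribed sizes. Writing $M^{N}_{ij}(\eta^{N}(t))$ for the number of edges of $\B^{N}$ with endpoints in $\cC_{i}(\eta^{N}(t))$ and $\cC_{j}(\eta^{N}(t))$, a direct two-point count gives
\begin{align*}
\condexpect{\frac{M^{N}_{ij}(\eta^{N}(t))}{Nd}}{\xi^{N}(t)=\vp}
=
\frac{2\,l_{i}l_{j}}{N(N-1)}
=
U^{N}_{i,j}(\vp),
\end{align*}
and an analogous identity holds for the internal splitting-edge counts $S^{N}_{j,k}$ and the rates $V^{N}_{j,k}$. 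Thus, averaged over $\mu^{N}$, the non-Markovian generator of $\xi^{N}$ matches $\cG^{N}$ exactly.

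Building on this, the coupling I propose is the following. Use a unit-rate Poisson clock; at each event time $t$ pick a uniformly random edge $b=\{x,y\}\in\B^{N}$, apply $\tau_{b}$ to $\eta^{N}$, and perform the \emph{matched} operation in $\zeta^{N}$: if $x$ and $y$ lie in distinct cycles of $\eta^{N}(t^{-})$ with size-ranked indices $i\ne j$, apply $\mathsf{M}_{ij}$ to $\zeta^{N}(t^{-})$; if $x,y$ lie in the same cycle $\cC_{i}(\eta^{N}(t^{-}))$, read off the cyclic displacement of $x,y$ to define a split index $k\in\{1,\dots,l_{i}-1\}$ and apply $\mathsf{S}_{i}^{k/(Np_{i})}$. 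Because the pointwise conditional rates of these matched operations differ from the rates $U^{N}_{i,j},V^{N}_{j,k}$ appearing in $\cG^{N}$, one attaches to $\zeta^{N}$ an independent family of \emph{repair} Poisson clocks --- firing the additional mergers/splits that $\eta^{N}$ missed, and, via an acceptance/rejection step, thinning those that $\eta^{N}$ executed too often --- so that the marginal law of $\zeta^{N}$ is genuinely the mean-field split-merge process. The $\ell^1$-distance $D^{N}(t):=d(\xi^{N}(t),\zeta^{N}(t))$ then changes only at repair events or at matched events whose size-ranked labels in the two processes no longer align, with each jump bounded by $4/N$.

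The quantitative step is to control the cumulative rate of the bad events. Setting
\begin{align*}
\varepsilon^{N}(t)
:=
\sum_{i<j}
\left|
\frac{M^{N}_{ij}(\eta^{N}(t))}{Nd}-U^{N}_{i,j}(\xi^{N}(t))
\right|
+
\sum_{j,k}
\left|
\frac{S^{N}_{j,k}(\eta^{N}(t))}{Nd}-V^{N}_{j,k}(\xi^{N}(t))
\right|,
\end{align*}
stationarity of $\eta^{N}$ together with a concentration estimate for the number of bichromatic edges under a uniformly random colouring of $\T^{N}$ with colour-class sizes $(l_{i})$ --- a standard Hoeffding / negative-association bound for sampling without replacement --- yields a quantitative fixed-time bound $\expect{\varepsilon^{N}(t)}=o(1)$. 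The compensator of $D^{N}$ is then controlled by $\varepsilon^{N}$ (plus a drift contribution coming from mislabelled matched events which is itself bounded by $D^{N}$), and a Dynkin-martingale plus Doob's maximal inequality gives an estimate of the form $\expect{\max_{0\le t\le T}D^{N}(t)}\le C(T)\cdot o_{N}(1)$; choosing $T^{*}(N)\to\infty$ slowly enough that the right-hand side still vanishes completes the proof.

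The main difficulty I foresee is precisely in upgrading the fixed-time concentration of $\varepsilon^{N}$ to a trajectory statement on $[0,T^{*}(N)]$: the process $\xi^{N}$ is not Markovian in its own right and the naive union bound over a time grid is wasteful. My expectation is that the argument must separate the macroscopic cycles (of size $\Theta(N)$), for which the relative fluctuation of $M^{N}_{ij}$ is $O(N^{-1/2})$ and a Gronwall-type bootstrap propagates the coupling as long as the large-block structures of $\xi^{N}$ and $\zeta^{N}$ remain close, from the microscopic cycles (of size $O(1)$), whose relative edge-count fluctuations are $O(1)$ but each of which contributes only $O(1/N)$ to $D^{N}$, so that only a rough a priori control of their combined total mass is required. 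Making these two regimes cooperate --- without having to restart the bootstrap and thereby shortening $T^{*}(N)$ --- is where the real technical weight of the proof should lie.
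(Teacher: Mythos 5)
Your overall architecture --- a coupling driven by a common Poisson clock on edges, with acceptance/rejection thinning plus an independent family of repair clocks to make the $\zeta^N$-marginal the Markov process $\cG^N$, and a Gr\"onwall/martingale argument driven by the stationary mismatch rate --- is indeed the route the paper takes (see Subsection~2.1 for the coupling and Subsection~2.5 for exactly the Gr\"onwall-style alternative you sketch). Your identification of the conditional mean of the merge rates with $U^N_{i,j}$ is also correct and matches Lemma~\ref{lem:condexp-phipsi}.

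However, there is a genuine gap at the split-rate concentration step, and it is precisely the point where the paper does something non-obvious that your plan omits. You claim that ``Hoeffding / negative-association for sampling without replacement'' gives $\expect{\varepsilon^N(t)}=o(1)$ for the combined merge and split mismatch rate. That is true for the merge terms $M^N_{ij}/(Nd)$ (total bichromatic edge counts), but it fails for the per-displacement split rates $S^N_{j,k}/(Nd)$ (the paper's $Y^N_{j,k}$). For a macroscopic cycle of size $l_j=\alpha N$, the conditional mean of $Y^N_{j,k}$ is $\xi^N_j/(N-1)\approx \alpha/N$, while its conditional standard deviation (from \eqref{eq:var-psi}, or directly from a Poisson-type count of $O(1)$ nearest-neighbour pairs at cyclic displacement $k$) is $\approx\sqrt{\alpha}/N$, \emph{the same order as the mean}. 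Summing $\condexpect{|Y^N_{j,k}-V^N_{j,k}|}{\xi^N}$ over $k\in\{1,\dots,\alpha N-1\}$ therefore gives order $\alpha N\cdot\sqrt{\alpha}/N=\alpha^{3/2}$, and summing over $j$ gives something bounded but \emph{not} vanishing. So your $\varepsilon^N$ is $\Theta(1)$, not $o(1)$, and the coupling runs off the rails after $O(1)$ time, independently of $N$.

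The paper's fix --- the key new idea you are missing --- is the smoothing of the splitting rates over a displacement window of width $M$: it replaces $Y^N_{j,k}$ by $Z^N_{j,k}=\sum_l w_{N\xi^N_j}(k,l)Y^N_{j,l}$ in \eqref{ZNd-var}, with the averaging weights of \eqref{w-def}. Because the $Y^N_{j,l}$ are essentially uncorrelated across $l$ (covariance survives only on the diagonal $l\sim l'$ on the same bond, cf.\ \eqref{eq:var-psi}), averaging over $\sim M$ values knocks the conditional variance down by a factor $M$, yielding the bound \eqref{eq:var-bound} and ultimately $\sum_k\condexpect{|\wt Z^N_{j,k}|}{\xi^N}\lesssim \xi^N_j\sqrt{M/N+\xi^N_j/M}$, which is $o(1)$ when $M\to\infty$ with $M/N\to 0$. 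The price paid is that a matched split now couples displacement $k$ in $\xi^N$ to a possibly different $l$ with $|k-l|\le M$ in $\zeta^N$, so (by \eqref{split-dist}) the $\ell^1$-distance can \emph{increase} by $2M/N$ at each such event; choosing $M=\sqrt N$ balances the two effects and gives the $N^{-1/4}$ in \eqref{condexpect Z bound}. A secondary inaccuracy in your write-up: the jumps of $D^N(t)$ at repair events are not bounded by $4/N$; by \eqref{merge-mis} and \eqref{split-mis} they can be of order $\xi^N_j+\zeta^N_j=O(1)$, and are instead controlled through their (small) rate, not their size.
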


\noindent
{\bf Note:}
In the coupling of Theorem \ref{thm:coupl} the marginal processes $t\mapsto \eta^{N}(t)$ and $t\mapsto \zeta^{N}(t)$ are stationary but the coupled pair $t\mapsto \left(\eta^{N}(t), \zeta^{N}(t)\right)$ is not. 

\begin{corollary}
\label{cor:weaklimit}
On any compact time interval $t\in[0,T]$
\begin{align*}
\xi^{N}(\cdot)
\Rightarrow
\zeta(\cdot),
\end{align*}
as $N\to\infty$, where $\Rightarrow$ denotes weak convergence in the space of c.a.d.l.a.g. trajectories in $\Omega$, endowed with the Skorohod topology based on the distance  \eqref{ellone}. 
\end{corollary}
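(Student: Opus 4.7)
\textbf{Plan for Corollary~\ref{cor:weaklimit}.} The plan is to deduce the weak convergence directly from Theorem~\ref{thm:coupl} combined with the classical weak convergence of the mean-field coagulation-fragmentation process $\zeta^N(\cdot)$ to the canonical split-and-merge process $\zeta(\cdot)$, via a standard Slutsky-type argument in the Skorohod space.

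First I would fix a compact time horizon $T>0$ and observe that, since $T^{*}(N)\to\infty$, for all $N$ large enough one has $T^{*}(N) \ge T$. Consequently, restricting \eqref{coupl} to $[0,T]$ yields
\begin{align*}
\sup_{0\le t\le T} d\bigl(\xi^{N}(t), \zeta^{N}(t)\bigr) \toprob 0, \qquad N\to\infty .
\end{align*}
Since the uniform distance on $D([0,T], \Omega)$ dominates the Skorohod $J_1$-distance $d_{\mathrm{Sk}}$, the same convergence in probability holds in the Skorohod metric, i.e.\ $d_{\mathrm{Sk}}(\xi^N, \zeta^N) \toprob 0$.

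Next, under the coupling of Theorem~\ref{thm:coupl} the initial state $\zeta^{N}(0) = \xi^{N}(0)$ has distribution $\pi^{N}$ (by stationarity and Ewens's formula \eqref{Ewens}), and $\pi^{N}$ converges weakly to the Poisson-Dirichlet measure $\pi$ on $(\Omega, d)$. Therefore the weak convergence result of Mayer-Wolf--Zeitouni--Zerner and Diaconis--Mayer-Wolf--Zeitouni--Zerner quoted above applies to give
\begin{align*}
\zeta^{N}(\cdot) \Rightarrow \zeta(\cdot) \qquad \text{in } D([0,T], \Omega) ,
\end{align*}
where $\zeta(\cdot)$ is the canonical split-and-merge process started from $\pi$ (and hence stationary).

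Finally, I would conclude by invoking the elementary fact that in a Polish space, if $Y_N \Rightarrow Y$ and the metric distance $d_{\mathrm{Sk}}(X_N, Y_N) \toprob 0$, then also $X_N \Rightarrow Y$. Applied with $X_N = \xi^N(\cdot)$, $Y_N = \zeta^N(\cdot)$ and $Y = \zeta(\cdot)$, this yields the claim. There is no substantive obstacle in this transfer step; the entire difficulty of the result is concentrated in Theorem~\ref{thm:coupl}, whose coupling controls the two marginals uniformly on windows of size $T^*(N)\uparrow\infty$. The only point worth noting is that the coupled pair $(\eta^N, \zeta^N)$ is itself not stationary, but this plays no role in the weak-convergence argument since each marginal \emph{is} stationary and only pathwise proximity on $[0,T]$ is used.
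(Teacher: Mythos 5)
Your proof is correct and follows the same route the paper implicitly takes: combine the coupling estimate of Theorem~\ref{thm:coupl} (valid on $[0,T]$ once $T^{*}(N)\ge T$) with the established weak convergence $\zeta^{N}(\cdot)\Rightarrow\zeta(\cdot)$ from \cite{mayerwolf-zeitouni-zerner-02,diaconis-mayerwolf-zeitouni-zerner-04}, and transfer via a converging-together (Slutsky-type) lemma in the Skorohod space. You simply spell out the details -- domination of the Skorohod metric by the uniform metric, the identification of the initial law as $\pi^N\to\pi$, and the irrelevance of the joint pair's non-stationarity -- that the paper leaves to the reader.
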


\subsection{Conjectures}
\label{sub:conjectures}

In the following three conjectures 
the random stirring process $t\mapsto 
{\wt\eta}
^{N}(t)$ starts from the initial state $
{\wt\eta}
^{N}(0)=\mathsf{id}$ rather than being stationary
{and runs on the original time scale of unit stirring rate per edge.}  
We use subscript $0$ in  $\probabz{\cdot}$ to stipulate this initial condition. 

The conjectures are formulated in their 
increasing 
order of complexity: each being a natural refinement of the previous one. 

The basic and best known conjecture in the context of random stirrings on $\T^{N}$ is the "long cycle conjecture" originating in the stochastic representation of the spin-$\frac12$ quantum Heisenberg ferromagnet of T\'oth \cite{toth-93}. Affirmative settling of part (ii) of this conjecture would be essentially equivalent to proving existence of off-diagonal long range order at low temperatures for the isotropic spin-$\frac12$ quantum Heisenberg ferromagnet, in dimensions $d\geq3$ \  -- \ a Holy Grail of mathematically rigorous quantum statistical physics. For details see \cite{toth-93}. 

\begin{conjecture}
\label{con:toth}
(i) 
In dimension $d=2$, for any $t\in[0,\infty)$, any $i\in\{1,2,\dots\}$ and any $\varepsilon>0$
\begin{align}
\label{nogiant}
\lim_{N\to\infty}
\probabz{p_i(
{ \wt\eta}
^{N}(t))\ge \vareps}
=0.
\end{align}
(ii) 
In dimension $d\geq 3$ there exists ${ \beta_c}={ \beta_c}(d)\in(0,\infty)$, such that if $t\in[0,{ \beta_c})$ then \eqref{nogiant} holds, while if  $t\in({ \beta_c}, \infty)$ then  for any $i\in\{1,2,\dots\}$ and $\varepsilon$ sufficiently small
\begin{align*}
\varliminf_{N\to\infty}
\probabz{p_i(
{ \wt\eta}
^{N}(t))\ge \vareps}
>0.
\end{align*}
Furthermore, the function
\begin{equation*} 
m (t ) = \lim_{k\to\infty}\lim_{N\to\infty}
{
\sum_{i=1}^k 
}\, 
\expectz{p_i(
 \wt\eta^{N}(t))}
\end{equation*}
is a well defined  non-decreasing continuous function from  $[ 0,\infty)$ to $[0, 1]$, such that $m({ \beta_c})=0$,  $m(t)>0$ for $t>{ \beta_c}$, and $\lim_{t\to\infty}m(t)=1$. 
\end{conjecture}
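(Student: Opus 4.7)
Conjecture~\ref{con:toth} is widely recognized as the central open problem in this area, and any genuine proof would essentially yield off-diagonal long-range order for the $d$-dimensional quantum Heisenberg ferromagnet. With that caveat, my plan splits the statement into a subcritical part, a supercritical part (which is the principal obstacle), the two-dimensional half of (i), and the regularity claims for $m(t)$, attacking each separately.

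For the subcritical half of both (i) and (ii) I would use a branching / cluster-expansion argument. The edges bearing at least one swap during $[0,t]$ form a Bernoulli bond percolation of density $1-e^{-t}$ on $\T^N$, and every cycle is contained in the connected component of one of its vertices. Tracing the cycle through a fixed vertex as a chronological exploration of that component, and combining a Peierls bound with a tree approximation of the exploration, should yield an exponentially decaying tail for the cycle length uniformly in $N$, valid for all $t$ below some explicit $\beta_0(d)>0$. This already establishes \eqref{nogiant} for small $t$ in every dimension and, taken together with the monotonicity discussed below, gives one half of the phase transition in (ii) with some $\beta_c \ge \beta_0(d)$.

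The supercritical regime of (ii) is where the real difficulty lies. The most natural route is the walk representation: tracing the cycle through $x$ by chronological application of swaps realises $\cC_x$ as the trace of a (biased) simple random walk on $\T^N$. In $d\ge 3$ the transience of this walk heuristically explains why, above some critical swap density, the trace escapes to macroscopic distance before closing up and therefore produces a giant cycle. Turning this into a uniform-in-$N$ lower bound appears to require either an infrared bound transferred from the associated quantum spin system via reflection positivity, or a lace expansion controlling the walk's self-interaction. Neither is within reach at present, and I expect this step to be the main obstacle to a complete proof.

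The planar case (i) at arbitrarily large $t$ carries a structurally similar obstacle in the opposite direction: recurrence of the two-dimensional walk is the heuristic reason cycles remain sub-macroscopic, but a rigorous proof is of Mermin--Wagner type and equally beyond current technology. I would first push the subcritical cluster bound up to the onset of bond percolation of the swap graph, and then attempt a multiscale renormalisation argument showing that even after the swap graph percolates, the traced exploration inside a macroscopic swap-cluster does not produce cycles of diameter comparable to $n$. Conditional on the two phase-boundary inputs, the regularity of $m(t)$ is comparatively soft: monotonicity follows from a thinning coupling realising $\wt\eta^N(t)$ for all $t$ on a single probability space; continuity from equicontinuity in $t$ of the finite-volume approximants $m_N(t):=\sum_{i\le k}\expectz{p_i(\wt\eta^N(t))}$ using the uniform jump-rate bound; and $\lim_{t\to\infty} m(t)=1$ by coupling $\wt\eta^N(t)$ to its stationary counterpart and invoking the $\mathsf{PD}(1)$ identity $\sum_i p_i =1$. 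Sharpness of the threshold, $m(\beta_c)=0$ together with $m(t)>0$ just above, would require an Aizenman--Barsky-type differential inequality for the expected cycle-length generating function, whose applicability is again conditional on the supercritical bound.
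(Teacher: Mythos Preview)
The paper does not prove this statement: it is explicitly labelled a \emph{conjecture} and described in the text as ``a Holy Grail of mathematically rigorous quantum statistical physics.'' There is therefore no paper proof to compare against. Your proposal is candid about this --- you yourself say the supercritical bound and the planar case are ``beyond current technology'' --- so what you have written is a research outline, not a proof.

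Even at the heuristic level there is a concrete gap worth flagging. You write that monotonicity of $m(t)$ ``follows from a thinning coupling realising $\wt\eta^N(t)$ for all $t$ on a single probability space.'' The natural coupling (a single Poisson process of swaps, restricted to $[0,t]$) does realise all the $\wt\eta^N(t)$ simultaneously, but it does \emph{not} make the cycle lengths, or the partial sums $\sum_{i\le k}p_i(\wt\eta^N(t))$, monotone in $t$: appending a transposition can split a cycle as well as merge two. So monotonicity of $m$ is not a soft consequence of coupling; it is part of the conjectural content. The paper in fact emphasises that the relation between cycles and the underlying bond-percolation clusters is delicate --- the mass $m(t)$ is strictly smaller than the percolation-cluster density --- which is another warning that percolation monotonicity does not transfer directly.

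In short: there is nothing to compare, the proposal is openly incomplete on the hard parts, and the ``soft'' regularity claims for $m(t)$ are less soft than you suggest.
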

{ 
The quantity $m(t)$ is the total fraction of sites belonging to cycles of macroscopic size, in the thermodynamical limit $N\to\infty$.
The probability that at least one transposition occurs across a bond ${\mathsf b}$ by time $t$ is $ = 1- {\rm e}^{-t}$, which may be viewed as a percolation probability across ${\mathsf b}$.   Evidently, at time $t$ macroscopic size permutation loops can lie only inside the corresponding 
 macroscopic connected clusters. 
In particular, $\beta_c (d)$ should be at least as large as 
$-\log \lb 1- q_c (d ) \rb$, where $q_c ( d)$ is the critical value for the Bernoulli nearest neighbour bond percolation on 
${\mathbb Z}^d$. 
Unlike in the Curie-Weiss mean field setting studied by Schramm \cite{schramm-05}, on $\Z^d$ the mass function $m(t)$ which appears in Conjecture~\ref{con:toth}  is 
strictly smaller 
than the density of the unique macroscopic-size 
percolation cluster, see the proof in the recent paper
\cite{muhlbacher-19}.
}
\smallskip 

Based on the mean-field (Curie-Weiss) results of Schramm \cite{schramm-05} and compelling numerical evidence Ueltschi et al. \cite{ueltschi-11, ueltschi-13} have formulated a refined version of  Conjecture~\ref{con:toth}, which not only affirms appearance of cycles of macroscopic size beyond a critical stirring time, but claims that the joint distribution of cycle lengths, rescaled by the total amount of gel,  weakly converges to the Poisson-Dirichlet measure $\pi$, just like in the mean field (Curie-Weiss) setting proved by Schramm \cite{schramm-05}. 
 
\begin{conjecture}
\label{con:ueltschi}
Assume $d\geq 3$ and let ${\beta_c}$ {and $m$}  be as in Conjecture \ref{con:toth} (ii)
and ${ \tau> \beta_c}$. 
For any $k\in \N$ and 
for any  bounded and continuous  function $f (\xi )= f (\xi_1, \dots , \xi_k)$, 
\begin{align*}
\lim_{N\to\infty}
\expectz{f(m({ \tau})^{-1}\vp(
\wt\eta^{N}({ \tau})))}
=
\int_{\Omega} fd\pi.
\end{align*}
\end{conjecture}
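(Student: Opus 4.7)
The plan is to leverage Theorem~\ref{thm:coupl} and its Corollary~\ref{cor:weaklimit}, which identify the canonical split-and-merge as the limiting dynamics of cycle lengths in the \emph{stationary} regime, and to combine them with a macroscopic-cycle input at time $\tau$. Concretely, I would follow a three-step route: (i) establish Conjecture~\ref{con:toth}(ii), so that at time $\tau>\beta_c$ a positive fraction $m(\tau)$ of sites lies in cycles of macroscopic size; (ii) upgrade the coupling of Theorem~\ref{thm:coupl} to the \emph{non-stationary} trajectory $s\mapsto \wt\eta^N(\tau+s/(Nd))$ issued from $\wt\eta^N(0)=\mathsf{id}$, showing that on a window $s\in[0,T^{*}(N)]$ with $T^{*}(N)\to\infty$ the rescaled cycle-length process is still well approximated by a split-and-merge chain $\zeta^N$ with initial datum $\vp(\wt\eta^N(\tau))$; (iii) exploit the ergodicity and reversibility of the canonical split-and-merge process with respect to $\mathsf{PD(1)}$ to conclude that the rescaled macroscopic cycle lengths $m(\tau)^{-1}\vp(\wt\eta^N(\tau))$ converge in distribution to $\pi$, which by continuity and boundedness of $f$ yields convergence of the integrals.

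For step (ii), the key is to identify where stationarity actually enters the proof of Theorem~\ref{thm:coupl}. If, as in the mean-field case analysed by Schramm \cite{schramm-05}, the comparison with the split-and-merge generator $\cG^N$ only requires that macroscopic cycles be spread out approximately uniformly across $\T^N$, then the assumption $\eta^N(0)\sim \mu^N$ can be replaced by an a priori \emph{homogenization} estimate for large cycles, valid in the post-critical regime $\tau>\beta_c$. Quantitatively, one would need to show that, conditionally on the cycle-length profile at time $\tau$, the probability that a freshly applied swap $\tau_{\mathsf{b}}$ joins cycles $\cC_i,\cC_j$ is asymptotically $\propto p_ip_j$ (mean-field rate \eqref{eq:UVNrates}), and analogously for intra-cycle edges producing splittings. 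Such a property should follow from transience and uniform occupation estimates for the simple random walk on $\Z^d$, combined with a multi-scale argument showing that each macroscopic cycle has density $\sim p_i$ in every mesoscopic ball. Step (iii) is then standard: one applies Corollary~\ref{cor:weaklimit} in rescaled form (replacing total mass $1$ by $m(\tau)$) together with the mixing of $\cG$ towards its unique reversible equilibrium $\pi$ \cite{mayerwolf-zeitouni-zerner-02, diaconis-mayerwolf-zeitouni-zerner-04}, using that $T^{*}(N)\to\infty$ provides a window of arbitrarily many relaxation times on the slow time scale.

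The main obstacle is unequivocally step (i): Conjecture~\ref{con:toth}(ii) is itself the celebrated open problem driving this entire program, and its resolution for $d\ge3$ appears to require genuinely new ideas far beyond those employed in the present note (e.g.\ reflection positivity, infrared bounds, or a non-perturbative rigorous random-loop analogue of spontaneous symmetry breaking). A secondary but still serious technical difficulty lies inside step (ii): extracting the uniform spatial spreading of macroscopic cycles plays the role of the ``exchangeability'' that stationarity supplies for free in Theorem~\ref{thm:coupl}; the quantitative control needed to propagate this spreading over a macroscopic window $T^{*}(N)$ while simultaneously running the coupling is far from automatic and would likely force one to combine the line of attack of this note with sharp cluster-decomposition estimates, possibly in an entropy or relative-entropy framework adapted to the post-critical regime.
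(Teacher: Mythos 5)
The statement you were asked to prove, Conjecture~\ref{con:ueltschi}, is precisely that—a conjecture. The paper does not prove it and does not claim to. What the paper actually establishes (Theorem~\ref{thm:coupl} and Corollary~\ref{cor:weaklimit}) is the \emph{stationary} analogue, which the authors explicitly describe as the special $\tau=\infty$ case of the further refinement Conjecture~\ref{con:our}. Working in the stationary regime is exactly what lets the authors bypass the two obstructions you flag: (a) at $\tau=\infty$ the one-dimensional marginal is Ewens's measure $\pi^N$, so macroscopic cycles are present for free without invoking Conjecture~\ref{con:toth}(ii); and (b) stationarity under $\mu^N$ gives the conditional multinomial/exchangeability structure (Lemmas~\ref{lem:condexp-phipsi} and~\ref{lem:cond-rates}) needed to match the instantaneous rates $X^N_{i,j}, Z^N_{j,k}$ to the mean-field rates $U^N_{i,j}, V^N_{j,k}$, again for free. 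So there is no paper proof to compare your proposal against.

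Your outline is an honest assessment of what a genuine proof at finite $\tau$ would require, and you correctly identify that its step (i) is Conjecture~\ref{con:toth}(ii) itself, which is wide open for $d\geq 3$, and that step (ii) would require replacing stationarity by an a priori homogenization/spreading estimate for macroscopic cycles in the post-critical regime. Neither ingredient is available. One further remark on your step (iii): mixing of $\zeta$ on a window of length $T^*(N)$ (in slowed-down time) only yields $\mathsf{PD(1)}$ statistics at times $\tau + O(T^*(N)/(Nd))$, not at $\tau$ itself. Since $T^*(N)/(Nd)\to 0$ and $m$ is conjecturally continuous this is a removable shift, but making it rigorous again requires macroscopic mass at a time slightly \emph{before} $\tau$, i.e.\ once more Conjecture~\ref{con:toth}(ii). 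So the logical dependence of your route on the long-cycle conjecture is even tighter than your write-up already concedes, and the proposal should be read as a plausible program contingent on that open problem, not as a proof.
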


The work presented in this note is primarily motivated by the following further refinement of the above conjecture. { As indicated above the mass function $m (t )$ lives and grows on the time 
scale of the random stirring process $\wt\eta^{N}$. 
 On the other hand, 
%} 
%On the time scale of the random stirring process 
the cycle structure of the permutation changes very fast
on this time scale}, due to the macroscopic number of edges connecting different cycles of macroscopic size, respectively, connecting different sites on the same cycle of macroscopic size. However, looking at a time-window of order $N^{-1}$ around  $\tau>{ \beta_c}$ and slowing down the time scale accordingly, we expect to see the cycles join and break up like in the canonical split-and-merge process $\zeta$. 

\begin{conjecture}
\label{con:our}
Under the conditions and notation of Conjecture \ref{con:ueltschi}, for $\tau> \beta_c$,
\begin{align*}
\left(t\mapsto m(\tau)^{-1}\vp(
{ \wt\eta}
^{N}(\tau+(Nd)^{-1}t))\right)
\Rightarrow
\left(t\mapsto \zeta(t)\right).
\end{align*}
\end{conjecture}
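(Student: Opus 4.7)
My plan is to adapt the coupling strategy behind Theorem~\ref{thm:coupl} to the transient supercritical regime $\tau>\beta_c$. Concretely, I would attempt to build, on a single probability space, a joint realisation of the slowed-down random-stirring process $t\mapsto \wt\eta^N(\tau+(Nd)^{-1}t)$ and of a mean-field coagulation--fragmentation process $t\mapsto\zeta^N(t)$ of the type in \eqref{GN}, initialised with $\zeta^N(0):=m(\tau)^{-1}\vp(\wt\eta^N(\tau))$ (restricted to its macroscopic coordinates and suitably renormalised to lie in $\Omega$), and show that
\begin{align*}
\lim_{N\to\infty}\probab{\max_{0\le t\le T}d\bigl(m(\tau)^{-1}\vp(\wt\eta^N(\tau+(Nd)^{-1}t)),\zeta^N(t)\bigr)>\delta}=0
\end{align*}
for every $T>0$, $\delta>0$. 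Given such a coupling, the already known convergence $\zeta^N\Rightarrow\zeta$ to the canonical split-and-merge then closes the argument, exactly as Corollary~\ref{cor:weaklimit} closes Theorem~\ref{thm:coupl}.

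Two inputs are needed. The first is Conjecture~\ref{con:ueltschi}, which fixes the initial state correctly by delivering joint convergence of the top coordinates of $m(\tau)^{-1}\vp(\wt\eta^N(\tau))$ to a $\mathsf{PD(1)}$ sample. The second, dynamical input is a \emph{spatial equidistribution} of the macroscopic cycles of $\wt\eta^N(\tau)$ on $\T^N$: writing $\cC_i,\cC_j$ for two distinct macroscopic cycles of sizes $Np_i$ and $Np_j$, one needs
\begin{align*}
\frac{\#\{\mathsf{b}\in\B^N:\mathsf{b}\text{ joins }\cC_i\text{ and }\cC_j\}}{\abs{\B^N}}\longrightarrow 2p_ip_j,\qquad
\frac{\#\{\mathsf{b}\in\B^N:\mathsf{b}\subset\cC_i\}}{\abs{\B^N}}\longrightarrow p_i^2,
\end{align*}
complemented by the ``uniform traversal'' statement that, for a uniformly chosen edge $\mathsf{b}=\{u,v\}\subset\cC_i$, the cyclic distance from $u$ to $v$ along $\cC_i$ normalised by $Np_i$ is asymptotically uniform on $[0,1)$. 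Granted these, the instantaneous rates at which pairs of macroscopic cycles merge and at which a single macroscopic cycle splits at position $u$ match exactly the mean-field rates $U^N_{i,j}$ and $V^N_{j,k}$ of \eqref{eq:UVNrates}; the vanishing-mass contribution of microscopic cycles can be absorbed into an error term, and a martingale comparison analogous to the one used for Theorem~\ref{thm:coupl} closes the coupling estimate.

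The hard part, and precisely the reason the statement remains a conjecture, is the spatial equidistribution itself at finite $\tau>\beta_c$. In the stationary regime exploited by Theorem~\ref{thm:coupl} the Ewens measure is invariant under conjugation by all of $\Sigma^N$, so macroscopic cycles are uniformly spread on $\T^N$ by pure symmetry; at finite $\tau$ no such symmetry survives, and the cycles live inside the random Poisson-stirring percolation clusters of edges that have fired by time $\tau$, with their geometry strongly coupled to that cluster structure. A plausible route is to combine (a) a comparison with Bernoulli bond percolation on $\T^N$ at density $1-e^{-\tau}$, whose unique giant cluster above the percolation threshold is asymptotically uniformly distributed, with (b) an internal mixing estimate for macroscopic cycles within a giant cluster, possibly via reflection-positivity or via exchangeability of cycle labels conditional on the permutation skeleton. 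Once spatial equidistribution is in hand, the generator-comparison and coupling steps should follow the lines of the proof of Theorem~\ref{thm:coupl}, with the extension from $\tau=\infty$ to finite $\tau>\beta_c$ being the main conceptual novelty.
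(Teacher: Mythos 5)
You have correctly recognised that the statement in question is an open conjecture: the paper does \emph{not} prove it, and explicitly notes that Theorem~\ref{thm:coupl} and Corollary~\ref{cor:weaklimit} settle only the special case $\tau=\infty$. There is therefore no proof in the paper to compare your proposal against; what you have written is a programme for attacking the general $\tau>\beta_c$ case, and it is consistent with what the paper itself suggests a proof would have to look like.

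Your diagnosis of the obstacle is accurate and well put. At $\tau=\infty$ the stationary measure $\mu^N$ is uniform on $\Sigma^N$, hence conjugation-invariant, so conditionally on the cycle-length vector $\xi^N$ the assignment of vertices of $\T^N$ to cycles is exactly multinomial (this is what underlies Lemma~\ref{lem:condexp-phipsi} and Lemma~\ref{lem:cond-rates}), and the concentration estimates \eqref{condexpect X bound}, \eqref{condexpect Z bound} follow. At finite $\tau>\beta_c$ the law of $\wt\eta^N(\tau)$ lives on permutations compatible with the time-$\tau$ stirring percolation cluster, has no conjugation symmetry, and the ``spatial equidistribution'' and ``uniform traversal'' statements you require are precisely what is missing and what would have to replace the multinomial calculations of the Appendix. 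Your suggested two-step route (comparison with Bernoulli percolation plus an internal mixing estimate for cycles inside the giant cluster) is speculative but plausible; the paper offers no competing mechanism, and the cited result \cite{muhlbacher-19} (that $m(t)$ is strictly smaller than the percolation density) already indicates that the Bernoulli comparison alone will not suffice.

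One point you might make more explicit: at finite $\tau$ the total sol mass $1-m(\tau)$ is itself macroscopic, so one also needs to control the gel--sol exchange (macroscopic cycles merging with or shedding microscopic fragments) on the slowed-down time scale, and to argue that the normalisation constant $m(\tau)$ remains effectively frozen while the gel composition churns. In the $\tau=\infty$ case $m\equiv 1$, so this issue does not arise; it is an additional, nontrivial part of what separates Conjecture~\ref{con:our} from the result actually proved. Modulo these acknowledged gaps, your sketch is a faithful extension of the paper's coupling strategy, not a genuinely different route.
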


Corollary \ref{cor:weaklimit} is the special $\tau=\infty$ case of this conjecture. 

\subsection{Random loops in { the} quantum Heisenberg model} 
\label{sub:HM}
Let { ${\mathbf P}_{0}^{\beta}$ be the restriction of the random stirring measure ${\mathbf P}_{0}$ with initial condition $\mathsf{id}$ to the time interval 
	$[0, \beta ]$. 
Given a permutation  $\eta \in \Sigma^N$ let $\ell (\eta )$ denote the number of different cycles of $\eta$. 
}
 In the language of section~\ref{sub:conjectures} the 
isotropic spin-$\frac12$ Heisenberg { ferromagnet} at inverse temperature 
$\beta$ corresponds to a random stirring  $t\mapsto {\wt\eta}^{N}(t)$ 
on the time interval $[0, \beta ]$ subject to the  modified 
path measures
${\mathbf P}_{0}^{\theta , \beta}  \lb {\cdot}\rb $;  
\be{eq:Ptheta} 
{\mathbf P}_{0}^{\theta , \beta} 
\lb {\rm d}{\wt\eta}
^{N}\rb \propto 
\theta^{{ \ell} (\wt\eta^{N} (\beta ))} 
{ 
{\mathbf P}_{0}^\beta 
}
\lb {\rm d}{\wt\eta}
^{N}\rb , 
\ee
with $\theta =2$. Measures ${\mathbf P}_{0}^{\theta , \beta}$ 
with other values of $\theta\neq 2$ are perfectly well defined. 
{
As noted in \cite{ueltschi-13}, integer values $\theta=2,3,4,\dots$ are related to stochastic representations of quantum spin systems with spin $s=\frac{\theta-1}{2}$ 
%on $\Z^d$, 
%with nearest neighbour interactions, 
{ with pair interactions}, 
which for $s=\frac12$ are exactly the isotropic ferromagnetic Heisenberg models, but for $s\geq 1$ are of more complex form. See \cite{ueltschi-13} for a fuller discussion.  (Fractional values of $\theta$ do not correspond to quantum spin systems.) 

On the other hand, as it was discovered and discussed in 
\cite{ueltschi-13},  in the 
{
$\theta=2$, or,
} 
spin-$\frac12$ case there is a whole family of modified stirring processes ${\mathbf P}_{0 ,{\mathsf u}}$ which interpolate between the ferromagnetic and anti-ferromagnetic models at the anisotropy parameter ${\mathsf u} \in [0,1]$. { In the  notation 
	of \cite{ueltschi-13} our  random stirring measure could be recorded as 
$	 
{\mathbf P}_{0}
= {\mathbf P}_{0 , {\mathsf 1}}
$. 
}
{This way \cite{ueltschi-13} provided an alloy of the random loop representations of the ferromagnetic (${\mathsf u} = 1$) and antiferromagnetic (${\mathsf u} = 0$) Heisenberg models, cf \cite{toth-93}, respectively, \cite{aizenman-nachtergaele-94}.}

In the { Curie-Weiss} mean field case, phase transition and Poisson-Dirichlet structure of ${\mathbf P}_{0 ,{\mathsf u}}${, for $\theta=1$ and ${\mathsf u}\in[0,1]$,} was worked out recently in \cite{BKLM18}, extending the study of the pure random stirring case, { $\theta=1$} and  ${\mathsf u}=1$, in  \cite{schramm-05}. { However}, even in the mean-field case { (Curie-Weiss)},   there are no direct  matching results for ${\mathbf P}_{0, {\mathsf u}}^{\theta , \beta}$ when $\theta \neq 1$.  The point is that for $\theta\neq 1$ the family of measures $\lbr {\mathbf P}_{0, {\mathsf u}}^{\theta , \beta}\rbr$ has  polymer structure: Namely,  ${\mathbf P}_{0,{\mathsf u}}^{\theta,\beta}$ is not a relativization of ${\mathbf P}_{0, {\mathsf u}}^{\theta , \beta^\prime}$ for $\beta <\beta^\prime$. In fact, under ${\mathbf P}_{0, {\mathsf u}}^{\theta , \beta}$ the process $\wt\eta^N$ is a continuous time Markov chain with time inhomogeneous jump rates $ J_{\eta , \eta^\prime}^{\theta , \beta }(t);\, t\in [0, \beta],$ given by 
\be{eq:j-rates} 
J_{\eta , \eta^\prime}^{\theta , \beta}(t) = \frac{h^{\theta , \beta}  (t, \eta^\prime)}{h^{\theta , \beta}  (t , \eta)} 
J_{\eta , \eta^\prime},\ \ {\rm where} \ \ h^{\theta , \beta}  (t , \eta) = 
{
{\mathbf E}_{0, {\mathsf u}} \lb \theta^{{ \ell} (\wt\eta^N (\beta ))} \, \big|\, \wt \eta (t ) = \eta 
\rb
}
, 
\ee
{ and  where $J_{\eta , \eta^\prime}$ are jump rates of  the modified stirring process ${\mathbf P}_{0, {\mathsf u}}$ (that is at $\theta = 1$)}.

In this respect, although Conjecture~1 is expected to hold as is,  it is not obvious what should be a proper reformulation of Conjectures~2-3 of the previous section for the family of measures $\lbr 
  {\mathbf P}_{0, {\mathsf u}}^{\theta , \beta}\rbr$. For instance, even if we assume Conjecture~1 and take $\beta > \beta_c$, { it is not clear what should be an adequate  answer to the following question: } Is it indeed  reasonable to expect that, for $t>\beta_c$  jump rates 
  $J_{\eta , \eta^\prime}^{\theta , \beta}(t)$ in  \eqref{eq:j-rates} are essentially constant  on 
  slowed down time scales of order $1/N$? 
  
  Furthermore, 
  it is not even clear what should be a proper formulation 
  of the stationary dynamics at $\beta = \infty$. As it was noted 
  in Section VIII of  \cite{ueltschi-13} the modified uniform measure   
  $\mu^{N, \theta} (\eta ) \propto  \theta^{{ \ell} (\eta )} \mu^N (\eta )$ is reversible with respect to the dynamics with jump rates 
  \be{eq:Jtheta} 
  J_{\eta ,\eta^\prime} \sqrt{\frac{\theta^{{ \ell} (\eta^\prime)}}{\theta^{{ \ell} (\eta )}}}  , 
  \ee
  but it is not clear whether jump rates \eqref{eq:Jtheta} could be recovered, as an appropriate limit, from 
  \eqref{eq:j-rates}. If, on the other hand, we take 
  \eqref{eq:Jtheta} as the definition of modified jump rates 
  for the random stirring on the lattice torus $\T^{N}$, 
  then, at least in the ${\mathsf u}=1$ case, there is 
  a straightforward adaptation of all the techniques and 
  ideas we develop below, which leads to a modification 
  of Theorem~\ref{thm:coupl} with  limiting  asymmetric split and merge 
  dynamics which is reversible with respect to the Poisson-Dirichlet law $\mathsf{PD(\theta )}$.  

\section{Proofs}

Our proof of Theorem~\ref{thm:coupl} is based on a coupling construction  which is developed in Subsection~\ref{sub:coupling}. This construction paves the way for a careful control of mismatch rates between processes $\zeta^N (t )$ and $\xi^N (t ) = {\mathbf p} (\eta^N (t ))$ which start at time zero at the same configuration sampled from Ewens's measure $\pi^N$ in \eqref{Ewens}; as developed in Subsections \ref{sub:mismatch}-\ref{sub:concluded}. 
{ We would like to stress  that the coupling which we construct here  \emph{is not} just a basic coupling where processes jump together with maximal possible rates. Splitting components of the dynamics happen to be too singular, and we need to introduce smoothing parameter $M$ as in \eqref{split-choice}.  $M\to\infty$ plays a crucial role in our main variance estimate \eqref{eq:var-bound} below. In this way we permit small 
alterations of the distance between the two processes
we try to couple, and control probabilities of big mismatches. This is, arguably, a novel idea, and we prefer to give full detail on the level of developing direct  upper bounds on probabilities of big mismatches.

In the concluding Subsection~\ref{sub:Gronwall} we sketch an alternative proof  via  Gr\"onwall's inequality, which gives an asymptotically  vanishing upper bound on $\expect{ \max_{s\leq t} d\lb \xi^N (s) , \zeta^N (s )\rb}$. This alternative proof is based on the very same coupling constructions and mismatch and variance estimates as developed in 
Subsections~\ref{sub:mismatch}-\ref{sub:comp} and a fully worked-out version would be of comparable length and complexity as the proof presented below.
} 

\subsection{Construction of coupling}
\label{sub:coupling}

All processes constructed below are piecewise constant and c.a.d.l.a.g. The ingredients of the construction are the following fully independent objects: 

\begin{itemize}

\item
The initial state $\eta^{N}(0)\sim\mu^{N}$ distributed uniformly on $\Sigma^{N}$. 

\item
A collection of i.i.d. Poisson processes of rate $(dN)^{-1}$, $\big(\nu_{\mathsf{b}}(t): \ \mathsf{b}\in\B^{N}\big)$. Their sum $\nu(t):= \sum_{\mathsf{b}\in\B^{N}} \nu_{\mathsf{b}}(t)$ is a Poisson process of rate $1$.  Denote $\theta_0=0$, $\theta_n$ the time of the $n$-th jump of the cumulative process $\nu(t)$ and by $\beta_n\in\B^{N}$ the edge on which the event occurred. 

\item 
Another Poisson process $\nu^{\prime}(t)$ of rate $1$. Denote $\theta^{\prime}_0=0$ and $\theta^{\prime}_n$ the time of the $n$-th jump of process $\nu^{\prime}(t)$. For later use let $\nu^{\prime\prime}(t):= \nu(t)+\nu^{\prime}(t)$ (a Poisson process of rate 2), and $(\theta^{\prime\prime}_n)_{n\geq0}$ the jump-times of this process (that is: the ordered sequence of $\{\theta_n : n\geq0\}\cup\{\theta^{\prime}_n: n\geq0\}$.)

\item
Two independent sequences of i.i.d. $\mathsf{UNI}([0,1])$ random variables,  $\alpha_n$, $\alpha^{\prime}_n$,  $n\geq1$ serving as source of extra randomness at the jump times $\theta_n$ and $\theta^\prime_n$, when needed.

\end{itemize}

\noindent
First we construct the slowed-down random stirring $t\mapsto \eta^{N}(t)$ as follows: 

\begin{enumerate}[-]

\item
$\eta^{N} (0)$ is sampled uniformly from $\Sigma^{N}$.

\item
$\eta^{N} (t)$ is constant in the intervals $[\theta_{n-1}, \theta_n)$, $n\geq1$. 

\item
At times $\theta_{n+1}$, $n\geq0$, $\eta^{N} (t)$ jumps from its actual value $\eta^{N}(\theta_{n})$ to $\eta^{N}(\theta_{n+1})=\tau_{\beta_{n+1}}\eta^{N}(\theta_{n})$.  

\end{enumerate}

\noindent 
Summarizing: $\eta^{N} (t)=\tau_{\beta_{n}}\dots \tau_{\beta_{1}} \eta^{N}(0)$ for $t\in[\theta_{n}, \theta_{n+1})$. As indicated in \eqref{xiN} we denote $\xi^{N}(t):=\vp(\eta^{N}(t))$.

In order to construct the process $t\mapsto \zeta^{N}(t)$ coupled to $t\mapsto \eta^{N}(t)$ we need some further notation. Let 
\begin{align*}
\cC^{N}_{i}(t)
:=
\cC_i(\eta^{N}(t)), 
\qquad
\qquad
\xi^{N}_{i}(t)
:=
\frac{\abs{\cC_i(\eta^{N}(t))}}{N}.
\end{align*}
For $1\leq i<j$ and an unordered pair of sites $\mathsf{b}$, let 
$\{ \cC_i \stackrel{\mathsf{b}}{\longleftrightarrow}\cC_j\}$ 
denote the event (in $\Sigma^{N}$) that the bond $\mathsf{b}{ \in \B^{N}}$ connects the cycles $\cC_i$ and $\cC_j$, and hence, under the transposition $\tau_\mathsf{b}$, they would merge into one cycle of length $\abs{\cC_i}+\abs{\cC_j}$. Similarly, For $1\leq i$, $1\leq k$  and an unordered pair of sited $\mathsf{b}{ \in \B^{N}}$, let 
$\{ \cC_i \stackrel{\mathsf{b, k}}{\longleftrightarrow}\cC_i\}$ 
denote the event that $1\leq k < \abs{\cC_i}$ and the bond $\mathsf{b}$ connects two elements of the cycle $\cC_i$ separated by exactly $k$-steps along the cycle. Note, that in this notation the events  
$\{ \cC_i \stackrel{\mathsf{b, k}}{\longleftrightarrow}\cC_i\}$ 
and 
$\{ \cC_i \stackrel{\mathsf{b, \abs{\cC_i}-k}}{\longleftrightarrow}\cC_i\}$ are the same. We introduce the indicators 
\begin{align}
\label{phi}
\varphi^{N}_{i,j,\mathsf{b}}(t)
&:= { \varphi^{N}_{i,j,\mathsf{b}}(\eta^N (t)):=}
\one_{\{i<j\}}
\one_{\{ 
\cC_{i}^N(t) 
\stackrel{\mathsf{b}}{\longleftrightarrow}
\cC_{j}^N(t)
\}},
\\
\label{psi}
\psi^{N}_{i,k,\mathsf{b}}(t) 
&:=
{ \psi^{N}_{i,k,\mathsf{b}}(\eta^N (t))
:=}
\big(\frac12 \one_{\{k\not=N\xi^N_i ( t )/2\}}+ \one_{\{k=N\xi^N_i ( t )/2\}} \big)
\one_{\{ 
\cC_{i}^N(t) 
\stackrel{\mathsf{b},k}{\longleftrightarrow}
\cC_{i}^N(t)
\}}. 
\end{align}
and the variables
\begin{align}
\label{XNd-var}
&
X^{N}_{i,j}(t) { := X^{N}_{i,j}(\eta^N (t))}
:=
\frac{1}{d N} 
\sum_{\mathsf{b} \in \B^{N}}
\varphi^{N}_{i,j,\mathsf{b}}(t),
\\
\label{YNd-var}
&
Y^{N}_{j,k}(t){ := Y^N_{j,k}(\eta^N (t))}
:=
\frac{1}{d N} 
\sum_{\mathsf{b} \in \B^{N}}
\psi^{N}_{j,k,\mathsf{b}}(t),
\\
\label{ZNd-var}
&
Z^{N}_{j,k}(t){ := Z^{N}_{j, k}(\eta^N (t))}
:=
\sum_{l}
w_{N\xi^N_j}(k,l)
Y^{N}_{j,l}(t)
=
\frac{1}{d N} 
\sum_{\mathsf{b} \in \B^{N}}
\sum_{l}
w_{N\xi^N_j}(k,l)
\psi^{N}_{j,l,\mathsf{b}}(t),
\end{align}
where the weights $w_{m}(k,l)$ are defined for $m\geq2$, $1\leq k,l\leq m-1$ and $M\in\N$ as follows:
\begin{align}
\label{w-def}
\begin{aligned}
&
\text{if } m<M+2:
&&
w_{m}(k,l):= 
\one_{\{1\leq k,l <m\}}
\frac{1}{m-1},
\\
&
\text{if } m\geq M+2:
&&
w_{m}(k,l):= 
\one_{\{1\leq k,l <m\}}
\times
\\
& 
&&
\hskip1cm
%\phantom{w_{m}(k,l):= }
\begin{cases}
\displaystyle
1-\frac{\#\{l^\prime \in[1, m-1]: \abs{l^{\prime}-k}\in[1,  M]\}}{2M+1}
&\text{if } \abs{k-l}=0, 
\\
\displaystyle
\frac{1}{2M+1} 
&\text{if } \abs{k-l}\in[1,M], 
\\
\displaystyle 
0 
&\text{if } \abs{k-l}>M.
\end{cases}
\end{aligned}
\end{align}
Note that $w_{m}(k,l)=w_{m}(l,k)$ and $\sum_{k}w_{m}(k,l)=1$. 

The variables $X_{i,j}^N$ and $Y_{j,k}^N$ in 
	\eqref{XNd-var} and \eqref{YNd-var} describe instantaneous rates at which loops merge and split under the $\eta^N$-dynamics. 
	More precisely,  $X^{N}_{i,j}(t)$ is the instantaneous rate of merging 
{ $\cC_i^N(t)$} 
and 
{ $\cC_j^N(t)$}, 
and $Y^{N}_{i,k}(t)$ is the instantaneous rate of splitting 
{ $\cC_i^N(t)$} 
into two cycles of length $k$, respectively, 
{$\abs{\cC_i^N(t)}-k$}. 
Furthermore,  
\begin{equation*}
	\sum_{i, j}
	X^{N}_{i,j}(t)
	+
	\sum_{j}
	\sum_{k}
	Y^{N}_{j,k}(t)
	\equiv 1 
\end{equation*}	
The  proof of Theorem~\ref{thm:coupl} boils down to  verifying  that under the stationary  dynamics these rates are, 
in an  appropriate sense, close to the mean-field rates \eqref{eq:UVNrates}. Small cycles and exact splittings are harder to control.  Therefore, the  variables $Z^{N}_{j,k}$ represent 
cutoffs and randomization (or, in other words,  smoothening) of splitting rates $Y_{j,k}^N$ and they are designed 
in order  to facilitate the control of the $d$-distance in 
\eqref{coupl}. Note, however, that the total rate of splitting  is
preserved: For any cycle 
{ $\cC_j^N$}, 
\begin{align*}
\sum_{k}
Y^{N}_{j,k}(t)
\equiv 
\sum_{k}
Z^{N}_{j,k}(t)
\end{align*}

The parameter $M$ will be later chosen so that $1\ll M\ll N$, as $N\to\infty$.

Given the ingredients listed above, we construct the process $t\mapsto \zeta^{N}(t)$ as a piece-wise constant c.a.d.l.a.g. process on $\Omega^N$, as follows.

\begin{enumerate}[-]

\item
Start with $\zeta^{N}(0)=\xi^{N}(0)=\vp(\eta^{N}(0))$.

\item
Keep $\zeta^{N} (t)=\zeta^{N}(\theta^{\prime\prime}_{n})$  constant in the intervals $[\theta^{\prime\prime}_{n}, \theta^{\prime\prime}_{n+1})$, $n\geq0$. Recall  the
mean field rates \eqref{eq:UVNrates} and  let
\begin{align}
\label{UVN-var}
U^{N}_{i,j}(t)
:= U^{N}_{i,j}\lb \zeta^N (t )\rb 
%\frac{2N\one_{\{i<j\}}}{N-1} 
%\zeta^{N}_{i}(t)\zeta^{N}_{j}(t), 
\qquad\qquad
V^{N}_{j,k}(t)
:= V^{N}_{j, k}\lb \zeta^N (t )\rb .
%\frac{1}{N-1} \zeta^{N}_{j}(t) \one_{\{ 1\leq k< %N\zeta^{N}_{j}(t) \}}.
\end{align}

\item
At times $\theta^{\prime\prime}_{n+1}$, $n\geq0$, $\zeta^{N} (t)$ jumps from its actual value $\zeta^{N}(\theta^{\prime\prime}_{n})$ as follows.

\begin{enumerate}[$\circ$]

\item
If $\theta^{\prime\prime}_{n+1}=\theta_m$ for some $m\geq1$ then

\begin{enumerate}[$\circ\circ$]

\item
If at time $\theta_m$, in the random stirring process $\eta^{N}$, the cycles $\cC^{N}_i$ and $\cC^{N}_j$ merge, then 
\begin{align}
\label{merge-choice}
\zeta^{N}(\theta^{\prime\prime}_{n+1})
=
\begin{cases}
\mathsf{M}_{ij} \zeta^{N}(\theta^{\prime\prime}_{n})
&
\mathrm{w. \ prob.}
\qquad
\displaystyle
\frac{\min \left\{X^{N}_{i,j}(\theta^{\prime\prime}_{n}), U^{N}_{i,j}(\theta^{\prime\prime}_{n})\right\}}
{X^{N}_{i,j}(\theta^{\prime\prime}_{n})},
\\[15pt]
\zeta^{N}(\theta^{\prime\prime}_{n})
&
\mathrm{w. \ prob.}
\qquad
\displaystyle
\frac{\left(X^{N}_{i,j}(\theta^{\prime\prime}_{n}) - U^{N}_{i,j}(\theta^{\prime\prime}_{n})\right)_+}
{X^{N}_{i,j}(\theta^{\prime\prime}_{n})},
\end{cases}
\end{align}

\item
If at time $\theta_m$, in the random stirring process $\eta^{N}$, the cycle $\cC^{N}_i$ splits into two cycles of lengths $k$, respectively, $\abs{\cC^{N}_i}-k$  then 
\begin{align}
\label{split-choice}
\zeta^{N}(\theta^{\prime\prime}_{n+1})
=
\begin{cases}
\mathsf{S}_{i}^{l/(N\zeta_i^{N}(\theta^{\prime\prime}_{n}))} 
\zeta^{N}(\theta^{\prime\prime}_{n})
&
\mathrm{w. \ prob.}
\\
&
\displaystyle
\frac{w_{N\xi^N_i(\theta^{\prime\prime}_{n})}(k,l)+w_{N\xi^N_i(\theta^{\prime\prime}_{n})}(N\xi^N_i(\theta^{\prime\prime}_{n})-k,l)}{2}
\times
\\
&
\displaystyle
\frac{\min\left\{Z^{N}_{i,l}(\theta^{\prime\prime}_{n}),V^{N}_{i,l}(\theta^{\prime\prime}_{n})\right\}}
{Z^{N}_{i,l}(\theta^{\prime\prime}_{n})},
\\[15pt]
\zeta^{N}(\theta^{\prime\prime}_{n})
&
\mathrm{w. \ prob.}
\\
&
\displaystyle
\sum_{l}
\frac{w_{N\xi^N_i(\theta^{\prime\prime}_{n})}(k,l)+w_{N\xi^N_i(\theta^{\prime\prime}_{n})}(N\xi^N_i(\theta^{\prime\prime}_{n})-k,l)}{2}
\times
\\
&
\displaystyle
\frac{\left(Z^{N}_{i,l}(\theta^{\prime\prime}_{n})-V^{N}_{i,l}(\theta^{\prime\prime}_{n})\right)_+}
{Z^{N}_{i,l}(\theta^{\prime\prime}_{n})}.
\end{cases}
\end{align}
Note, that the first alternative of \eqref{split-choice} makes sense only if $l<N \zeta_i^{N}(\theta^{\prime\prime}_{n})$. This, however, does not cause any formal problem in the above algorithm, as the probability of that alternative becomes $0$ if $l\geq N\zeta_i^{N}(\theta^{\prime\prime}_{n})$, see \eqref{UVN-var}. 

\end{enumerate}

Use the $\mathsf{UNI}([0,1])$-distributed random variable $\alpha_m$ to decide between the choices in \eqref{merge-choice}, respectively, \eqref{split-choice}.

\item
If $\theta^{\prime\prime}_{n+1}=\theta^{\prime}_m$ for some $m\geq1$ then
\begin{align}
\label{compensate-choice} \ \zeta^{N}(\theta^{\prime\prime}_{n+1})
=
\begin{cases}
\mathsf{M}_{ij} \zeta^{N}(\theta^{\prime\prime}_{n})
&
\mathrm{w. \ prob.}
\qquad
\displaystyle
\left(U^{N}_{i,j}(\theta^{\prime\prime}_{n})-X^{N}_{i,j}(\theta^{\prime\prime}_{n})\right)_+,
\\[15pt]
\mathsf{S}_{i}^{l/(N\zeta_i^{N}(\theta^{\prime\prime}_{n}))} \zeta^{N}(\theta^{\prime\prime}_{n})
&
\mathrm{w. \ prob.}
\qquad
\displaystyle
\left(V^{N}_{i,l}(\theta^{\prime\prime}_{n})-Z^{N}_{i,l}(\theta^{\prime\prime}_{n})\right)_+,
\\[15pt]
\zeta^{N}(\theta^{\prime\prime}_{n})
&
\mathrm{otherwise}.
\end{cases}
\end{align}
Use the $\mathsf{UNI}([0,1])$-distributed random variable $\alpha^{\prime}_m$ to decide between the choices in \eqref{compensate-choice}. 

\end{enumerate}

\end{enumerate}

From this construction it is clear that
\begin{enumerate}[$\circ$]

\item 
The jumps
$\zeta^N \to \mathsf{M}_{ij}\zeta^N$
occur with rate
\begin{align*}
\quad X^{N}_{i,j}(t)
\frac{\min \left\{X^{N}_{i,j}(t), U^{N}_{i,j}(t)\right\}}
{X^{N}_{i,j}(t)}
+
\left(U^{N}_{i,j}(t)-X^{N}_{i,j}(t)\right)_+
=
U^{N}_{i,j}(t) ,
\end{align*}

\item
The jumps
$\zeta^N \to \mathsf{S}_{i}^{l/(N\zeta_i^N)}\zeta^N$ 
occur with rate 
\begin{align*}
&
\sum_{k}
Y^{N}_{i,k}(t) 
\frac{w_{N\xi^N_i(\theta^{\prime\prime}_{n-1})}(k,l)+w_{N\xi^N_i(\theta^{\prime\prime}_{n-1})}(N\xi^N_i-k,l)}{2}
\cdot
\frac{\min \left\{Z^{N}_{i,l}(t), V^{N}_{i,l}(t)\right\}}
{Z^{N}_{i,l}(t)}
\\
&\qquad\qquad
+
\left(V^{N}_{i,l}(t)-Z^{N}_{i,l}(t)\right)_+
=
V^{N}_{i,l}(t), 
\end{align*}

\end{enumerate}

\noindent
not depending on the path $t\mapsto \eta^{N}(t)$,  and thus $t\mapsto \zeta^N(t)$ is exactly the Markovian split-and-merge process whose infinitesimal generator is $\cG^N$ given in \eqref{GN}.

\subsection{Mismatch rate}
\label{sub:mismatch} 
The process $t\mapsto (\eta^{N}(t), \zeta^N(t))$ constructed above is clearly a Markov jump process on the state space $\Sigma^N\times\Omega^N$. The jump at time $\theta^{\prime\prime}_{n+1}$ is called \emph{mismatched} if either the second case in \eqref{merge-choice} or \eqref{split-choice} occurs:
\begin{align*}
\theta^{\prime\prime}_{n+1}=\theta_m, 
\qquad
\eta^{N}(\theta^{\prime\prime}_{n+1})\not=\eta^{N}(\theta^{\prime\prime}_{n}),
\qquad 
\zeta^{N}(\theta^{\prime\prime}_{n+1})=\zeta^{N}(\theta^{\prime\prime}_{n}), 
\end{align*}
or the first or second case in \eqref{compensate-choice} occurs:
\begin{align*}
\theta^{\prime\prime}_{n+1}=\theta^{\prime}_m, 
\qquad
\eta^{N}(\theta^{\prime\prime}_{n+1})=\eta^{N}(\theta^{\prime\prime}_{n}),
\qquad
\zeta^{N}(\theta^{\prime\prime}_{n+1})\not=\zeta^{N}(\theta^{\prime\prime}_{n}).
\end{align*}
Denote by $\tau^N$ the time of first occurrence of a mismatched event: 
\begin{align*}
\tau^N
:=
\inf \{ t>0: 
&
(\eta^N(t^+)\not=\eta^N(t^-) \, \land \, \zeta^N(t^+)=\zeta^N(t^-))
\, \lor \, 
\\
&
(\eta^N(t^+)=\eta^N(t^-) \, \land \, \zeta^N(t^+)\not=\zeta^N(t^-))
\}. 
\end{align*}
A straightforward computation shows that the instantaneous rate of occurrence of $\tau^N$ is 
\begin{align}
\label{mismatch rate}
\varrho^{N}(t)
:=
\sum_{i,j} \abs{X^{N}_{i,j}(t)-U^{N}_{i,j}(t)} + 
\sum_{j,k} \abs{Z^{N}_{j,k}(t)-V^{N}_{j,k}(t)}. 
\end{align}
Recall the mean field rates \eqref{eq:UVNrates} and 
$\eta^N$-dependent flip rates \eqref{XNd-var}-\eqref{ZNd-var},  and  denote 
\begin{align}
\label{XNd-condfluct}
&
{\wh X}^{N}_{i,j}(t)
:= { U^N_{i,j} \lb \xi^N (t )\rb}= 
\frac{2N\one_{\{i<j\}}}{N-1} 
\xi^{N}_{i}(t)\xi^{N}_{j}(t) ,
&&
{\wt X}^{N}_{i,j}(t)
:=
{X}^{N}_{i,j}(t)
-
{\wh X}^{N}_{i,j}(t),
\\
\label{YNd-condfluct}
&
{\wh Z}^{N}_{j,k}(t)
:= { V^N_{j,k} \lb \xi^N (t )\rb} = 
\frac{1}{N-1} \xi^{N}_{j}(t) \one_{\{ 1\leq k< N \xi^{N}_{j}(t) \}}, 
&&
{\wt Z}^{N}_{j,k}(t)
:=
{Z}^{N}_{j,k}(t)
-
{\wh Z}^{N}_{j,k}(t).
\end{align}
As we shall see in Lemma~\ref{lem:cond-rates} below, the above quantities match proper centering
{of $X^N_{i,j}(t)$ and $Z^N_{j,k}(t)$, 
conditional on $\xi^N(t)$,
under the equilibrium uniform 
distribution of $\eta^N(t)$ on $\Sigma^N$.} 

{From \eqref{mismatch rate}, \eqref{XNd-condfluct} and \eqref{YNd-condfluct} we readily obtain the following upper bound on the mismatch rate $\varrho^N(t)$}
\begin{align}
\notag
\varrho^{N}(t)
&
\le 
\sum_{i,j} \abs{\wt X^{N}_{i,j}(t)} 
+ 
\sum_{j,k} \abs{\wt Z^{N}_{j,k}(t)}
+
\sum_{i,j} \abs{\wh X^{N}_{i,j}(t)-U^{N}_{i,j}(t)} 
+ 
\sum_{j,k} \abs{\wh Z^{N}_{j,k}(t)-\wh V^{N}_{j,k}(t)}
\\
\label{mismatch bound 1}
&
\le 
\sum_{i,j} \abs{\wt X^{N}_{i,j}(t)} 
+ 
\sum_{j,k} \abs{\wt Z^{N}_{j,k}(t)}
+
1{3} d(\xi^N(t), \zeta^N(t)). 
\end{align}
In the last step we have used the following straightforward estimates.
\begin{align}
\label{eq:UV-estimates}
\sum_{i< j} \abs{ U^{N}_{i,j}(\xi)-U^{N}_{i,j}(\zeta)} 
\le  
\frac{6N}{N-1} d(\xi, \zeta)
\ \text{and}\ \ 
\sum_{i,k} \abs{ V^{N}_{i,k}(\xi)-V^{N}_{i,k}(\zeta)} 
\le 
	\frac{6N}{N-1} d(\xi, \zeta).
\end{align}
The details of these 
{last}
computations are safely left for the reader. 
\smallskip

Next we bound from above the {$d(\xi^N(t), \zeta^N(t))$}-term on the right hand side of \eqref{mismatch bound 1}. The eventual bound is
	recorded in Corollary~\ref{cor:smalldistance} below. 
It is based on the following lemma, which is used to control
the growth of  $\ell^1$-distance under splits and merges:

\begin{lemma}
\label{lem:d-jumps}
For any $\vx, \vy \in\Omega$, $i,j\in\N$, 
{$i<j$,} 
and $u,v\in(0,1)$ the following hold:
{
\begin{align}
\label{merge-dist}
d(\mathsf{M}_{i,j}\vx,\mathsf{M}_{i,j}\vy)
&\le 
d(\vx,\vy)
\\
\label{split-dist}
d(\mathsf{S}^u_{i}\vx,\mathsf{S}^v_{i}\vy)
&\le 
d(\vx,\vy)
+
2\abs{ux_i-vy_i}
\\ 
\label{merge-mis}
d(\mathsf{M}_{i,j}\vx,\vy)
&\le 
d(\vx,\vy) 
+
x_j + y_j
\\
\label{split-mis} 
d(\mathsf{S}^u_{i}\vx, \vy)
&\le 
d(\vx,\vy)
+
\frac{x_i +y_i}{2} .
\end{align}
}
\end{lemma}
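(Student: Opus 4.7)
The common underlying tool behind all four estimates is the classical rearrangement inequality: if $\va$ and $\vb$ are sequences with decreasing rearrangements $\va^*, \vb^*$, then $\sum_k |a^*_k - b^*_k| \le \sum_k |a_k - b_k|$. Since every element of $\Omega$ is already sorted, this means that to estimate $d(\mathsf{Op}\,\vx, \vy)$ for an operation $\mathsf{Op}$ defined as ``surgery followed by decreasing sort,'' one is free to pick any \emph{pre-sort} arrangement $\va$ of the multiset of entries of $\mathsf{Op}\,\vx$ and bound $d(\mathsf{Op}\,\vx, \vy) \le \sum_k |a_k - y_k|$. The plan is to exhibit such a convenient pre-sort arrangement in each of the four cases.

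For \eqref{merge-dist}, pair the entries of $\mathsf{M}_{ij}\vx$ in the order $(x_1, \ldots, x_{i-1}, x_i + x_j, x_{i+1}, \ldots, x_{j-1}, 0, x_{j+1}, \ldots)$ against the analogous arrangement for $\mathsf{M}_{ij}\vy$. Position $i$ contributes $|(x_i + x_j) - (y_i + y_j)| \le |x_i - y_i| + |x_j - y_j|$, position $j$ contributes $0$, and every other position contributes $|x_k - y_k|$, totalling at most $d(\vx,\vy)$. For \eqref{split-dist}, pair $(x_1, \ldots, x_{i-1}, ux_i, (1-u)x_i, x_{i+1}, \ldots)$ against $(y_1, \ldots, y_{i-1}, vy_i, (1-v)y_i, y_{i+1}, \ldots)$; the identity $(1-u)x_i - (1-v)y_i = (x_i - y_i) - (ux_i - vy_i)$ together with the triangle inequality yields $|ux_i - vy_i| + |(1-u)x_i - (1-v)y_i| \le |x_i - y_i| + 2|ux_i - vy_i|$, and the $|x_i - y_i|$ terms cancel.

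For \eqref{merge-mis}, use the pre-sort pairing $(x_1, \ldots, x_{i-1}, x_i + x_j, x_{i+1}, \ldots, x_{j-1}, 0, x_{j+1}, \ldots)$ against $\vy$ in its natural order. Position $i$ contributes at most $|x_i - y_i| + x_j$, position $j$ contributes $y_j$, and every other position contributes $|x_k - y_k|$, giving at most $d(\vx, \vy) - |x_j - y_j| + x_j + y_j \le d(\vx, \vy) + x_j + y_j$.

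The genuinely delicate item is \eqref{split-mis}: the bound $(x_i + y_i)/2$ is sharper than the bound $2\min(u,1-u)\,x_i$ one can extract from \eqref{split-dist} by taking $v\to 0$, and so a straightforward triangle inequality does not suffice. My plan is to take $u \le 1/2$ without loss of generality (since $\mathsf{S}^u_i = \mathsf{S}^{1-u}_i$ as the two-element split set is the same) and use the \emph{asymmetric} pre-sort arrangement $(x_1, \ldots, x_{i-1}, (1-u)x_i, x_{i+1}, \ldots, x_m, ux_i, 0, \ldots)$: the large piece $(1-u)x_i$ keeps position $i$ facing $y_i$, while the small piece $ux_i$ is parked at a trailing index where $\vy$ has a zero. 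The resulting pre-sort cost simplifies to $d(\vx, \vy) - |x_i - y_i| + |(1-u)x_i - y_i| + ux_i$, and a short case analysis on the signs of $x_i - y_i$ and $(1-u)x_i - y_i$ shows that the excess over $d(\vx, \vy)$ is either $0$, $2(y_i - (1-u)x_i)$, or $2ux_i$; using $u \le 1/2$ (so $(1-u)x_i \ge x_i/2$ and $ux_i \le x_i/2$) each of these three values is readily checked to be bounded by $(x_i + y_i)/2$. I expect this case analysis to be the only step requiring attention; the remaining three items are direct applications of the rearrangement inequality with the natural pre-sort pairings.
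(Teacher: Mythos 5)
Your proof is correct and follows the same route as the paper's: use the bijection form of $d$ from Lemma~\ref{lem:ellone-alt} and, for each operation, exhibit a convenient pre-sort pairing. For \eqref{merge-dist} and \eqref{split-dist} your pairings coincide with the paper's, and for \eqref{split-mis} your case analysis on the signs of $x_i-y_i$ and $(1-u)x_i-y_i$ is a valid substitute for the paper's one-line estimate $\abs{\max\{u,1-u\}x_i-y_i}\le\max\{u,1-u\}\abs{x_i-y_i}+\min\{u,1-u\}y_i$. The noteworthy difference is \eqref{merge-mis}: your assignment, with the merged entry $x_i+x_j$ facing $y_i$ and the freed zero facing $y_j$, yields exactly the stated bound $d(\vx,\vy)+x_j+y_j$, whereas the paper's printed proof uses the opposite assignment (zero facing $y_i$, merged entry facing $y_j$) and so only reaches the weaker $d(\vx,\vy)+x_i+y_i$; since it is the $j$-indexed form that is invoked in the Gr\"onwall sketch of Subsection~\ref{sub:Gronwall}, your version is the correct one and quietly repairs an indexing slip. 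One minor phrasing caveat: parking $ux_i$ ``at a trailing index where $\vy$ has a zero'' tacitly assumes finitely many nonzero parts; for general $\vy\in\Omega$ this should be read, as the paper implicitly does in its $\abs{\min\{u,1-u\}x_i-0}$ term, as matching against a formally appended zero, which Lemma~\ref{lem:ellone-alt} licenses.
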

{
In the proof of Theorem \ref{thm:coupl} we will use only the bounds \eqref{merge-dist} and \eqref{split-dist}. The bounds \eqref{merge-mis} and \eqref{split-mis} will be used in the alternative sketch-proof of section \ref{sub:Gronwall}.}
 
In proving these bounds we rely on the alternative, equivalent expression of the $\ell^1$-distance on $\Omega$: 

\begin{lemma}
\label{lem:ellone-alt}
\begin{align}
\label{ellone-alt}
d(\vx,\vy) 
=
\inf_{\pi}\sum_{i}\abs{x_i-y_{\pi(i)}},
\end{align}
where the infimum is taken over all bijections of $\N$. 
{In view of \eqref{ellone} the infimum in \eqref{ellone-alt} is actually a minimum which is attained at the trivial bijection $\pi (i)\equiv i$.} 
\end{lemma}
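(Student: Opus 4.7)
The plan is to prove $\sum_i \abs{x_i - y_i} \leq \sum_i \abs{x_i - y_{\pi(i)}}$ for every bijection $\pi: \N \to \N$, which combined with the trivial upper bound achieved at $\pi = \mathrm{id}$ will yield both \eqref{ellone-alt} and the attainment of the infimum at the identity. I would recast this as a Hardy--Littlewood-type rearrangement statement for decreasing step functions on the half-line.

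Encode the sequences as step functions $f_x, f_y : [0, \infty) \to [0, 1]$ by setting $f_x(t) := x_i$ and $f_y(t) := y_i$ for $t \in [i-1, i)$, $i \in \N$. Since $\vx, \vy \in \Omega$ are nonincreasing with total mass $1$, both $f_x$ and $f_y$ are decreasing and integrable with $\int f_x = \int f_y = 1$. For any bijection $\pi$, the step function $g_\pi(t) := y_{\pi(i)}$ on $[i-1, i)$ is Lebesgue-equidistributed with $f_y$: each value $y_k$ is taken on the unique interval $[\pi^{-1}(k)-1, \pi^{-1}(k))$ of length $1$, so for every $s > 0$ one has $\mu\{g_\pi > s\} = \#\{i : y_{\pi(i)} > s\} = \#\{k : y_k > s\} = \mu\{f_y > s\}$ by bijectivity of $\pi$. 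By construction,
\[
\sum_i \abs{x_i - y_i} = \int_0^\infty \abs{f_x - f_y}\,dt, \qquad \sum_i \abs{x_i - y_{\pi(i)}} = \int_0^\infty \abs{f_x - g_\pi}\,dt.
\]

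Using the identity $\abs{f_x - g} = f_x + g - 2\min(f_x, g)$ together with equidistribution of $g_\pi$ and $f_y$ (which gives $\int g_\pi = \int f_y$), the desired inequality reduces to the maximization $\int \min(f_x, f_y) \geq \int \min(f_x, g_\pi)$. Applying the layer-cake identity
\[
\min(u, v) = \int_0^\infty \one_{\{u > s\}}\one_{\{v > s\}}\,ds
\]
and Fubini, this in turn becomes a pointwise-in-$s$ comparison: the Lebesgue measure of $\{f_x > s\} \cap \{g_\pi > s\}$ is bounded above by $\min(\mu\{f_x > s\}, \mu\{g_\pi > s\}) = \min(\mu\{f_x > s\}, \mu\{f_y > s\})$; and for the decreasing functions $f_x, f_y$ the level sets $\{f_x > s\}$ and $\{f_y > s\}$ are initial intervals of $[0, \infty)$, whose intersection realizes this upper bound with equality.

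The only real technical subtlety is the equidistribution step, which is however immediate from bijectivity of $\pi$ and the unit length of each block $[i-1, i)$. Integrability is automatic throughout, since every integrand is nonnegative and dominated by $f_x + g_\pi$, whose integral equals $2$. Assembling these observations yields the rearrangement inequality, and $\pi = \mathrm{id}$ is manifestly among the minimizers, so the infimum in \eqref{ellone-alt} is in fact a minimum.
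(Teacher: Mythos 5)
Your proof is correct. Both you and the paper ultimately rest on a layer-cake decomposition, but the organization differs. The paper decomposes $\sum_j\abs{x_j - y_{\pi(j)}}$ directly into level-set counting functions,
\[
\sum_j\abs{x_j - y_{\pi(j)}} = \int_0^\infty\bigl(\#\{j: x_j\le t<y_{\pi(j)}\}+\#\{j: y_{\pi(j)}\le t<x_j\}\bigr)\,dt,
\]
then bounds each count from below by the positive and negative parts of the deficit $\#\{j:x_j>t\}-\#\{j:y_j>t\}$, arriving at $\int_0^\infty\abs{\#\{j:x_j>t\}-\#\{j:y_j>t\}}\,dt$, which is recognized as the symmetric difference of Young-diagram areas, i.e.\ $d(\vx,\vy)$. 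Your route instead passes through the identity $\abs{a-b}=a+b-2\min(a,b)$: after cancelling $\int g_\pi=\int f_y$ by equidistribution, the task becomes showing that $\int\min(f_x,g_\pi)\le\int\min(f_x,f_y)$, which is a Hardy--Littlewood rearrangement statement for $\min$, proved pointwise in the level $s$ by observing that $\mu(\{f_x>s\}\cap\{g_\pi>s\})\le\min(\mu\{f_x>s\},\mu\{f_y>s\})$ with equality when both level sets are nested initial intervals. So you trade the paper's direct counting bounds for a reduction to the $\min$-rearrangement inequality. What your version buys is conceptual modularity: it slots the lemma into the standard rearrangement framework, and the key step (intersection of nested level sets) is transparent. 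What the paper's version buys is self-containment and a slightly shorter path to the explicit ``excluded area'' identity; it also avoids introducing the equidistribution bookkeeping. Both are sound, and both establish that the identity permutation attains the minimum, as required.
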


\begin{proof}
[Proof of Lemma \ref{lem:ellone-alt}]
For any bijection $\pi:\N\to\N$, we have 
\begin{align*}
\sum_{j} \abs{x_j-y_{\pi(j)}}
= &{ \sum_j 
	 \int_0^\infty \lb \one_{x_j \leq t < y_{\pi (j )} }
		+  \one_{y_{\pi (j )} \leq t < x_j}\rb dt  }
	\\
& = \int_0^\infty 
\left(
{
\#\{j: x_j\leq t < y_{\pi(j)}\}
+
\#\{j:  y_{\pi(j)}\leq t < x_j\}
}
\right)
dt.
\end{align*}
However,
\begin{align*}
&
\#\{j: y_{\pi(j)}\leq t < x_j\}
\geq 
\left(
\#\{j: x_j>t\}
-
\#\{j: y_j>t\}
\right)_+
\\
&
\#\{j: x_j\leq t < y_{\pi(j)}\}
\geq 
\left(
\#\{j: x_j>t\}
-
\#\{j: y_j>t\}
\right)_-.
\end{align*}
Therefore, for any bijection $\pi:\N\to\N$, 
\begin{align*}
\sum_{j}\abs{x_j-y_{\pi(j)}}
\geq
\int_0^\infty 
\abs{
\#\{j: x_j>t\}
-
\#\{j: y_j>t\}}
dt
=
\sum_{j}\abs{x_j-y_j}, 
\end{align*}
where the last equality is just an expression for the excluded area  between two scaled  Young diagrams, 
and this completes the proof of the equality of the right hand sides of \eqref{ellone-alt} and \eqref{ellone}.
\end{proof}

\begin{proof}
[Proof of Lemma \ref{lem:d-jumps}] 
{
We will prove in turn the inequalities \eqref{merge-dist}, \eqref{split-dist}, \eqref{merge-mis} and \eqref{split-mis}. The proofs rely on Lemma~\ref{lem:ellone-alt} and elementary triangle inequalities.} 
\begin{align*}
\notag
d(\mathsf{M}_{i,j}\vx,\mathsf{M}_{i,j}\vy)
&
\leq 
\sum_{k:k\not=i,j}\abs{x_k-y_k} + \abs{x_i+x_j-y_i-y_j}
\\
\notag
&
\leq
\sum_{k:k\not=i,j}\abs{x_k-y_k} + \abs{x_i-y_i} + \abs{x_j-y_j} 
\\
&
=
d(\vx,\vy).
\end{align*}
\begin{align*}
\notag
d(\mathsf{S}^u_{i}\vx,\mathsf{S}^v_{i}\vy)
&
\leq 
\sum_{k:k\not=i}\abs{x_k-y_k} + \abs{ux_i-vy_i} + \abs{(1-u)x_i-(1-v)y_i}
\\
\notag
&
\leq
\sum_{k}\abs{x_k-y_k} + 2\abs{ux_i-vy_i} 
\\
&
=
d(\vx,\vy)+ 2\abs{ux_i-vy_i}.
\end{align*}
{
\begin{align*}
\notag
d(\mathsf{M}_{i,j}\vx,\vy)
&
\leq 
\sum_{k:k\not=i,j}\abs{x_k-y_k} + \abs{0-y_i} + \abs{x_i+x_j-y_j}
\\
\notag
&
\leq
\sum_{k:k\not=i}\abs{x_k-y_k} + x_i + y_i
\\
&
\leq
d(\vx,\vy) + x_i + y_i.
\end{align*}
\begin{align*}
\notag
d(\mathsf{S}^u_{i}\vx,\vy)
&
\leq 
\sum_{k:k\not=i}\abs{x_k-y_k} + \abs{\max\{u, 1-u\}x_i-y_i} + \abs{\min\{u,1-u\}x_i-0}
\\
\notag
&
\leq 
\sum_{k:k\not=i}\abs{x_k-y_k} + \max\{u, 1-u\}\abs{x_i-y_i} + \min\{u,1-u\}(x_i + y_i)
\\
&
\le
d(\vx,\vy)+ \frac{x_i + y_i}{2}.
\end{align*}
}
\end{proof}

\begin{corollary}
\label{cor:smalldistance}
As long as $t<\tau^N$, we have 
\begin{align}
\label{smalldistance}
d(\xi^N(t), \zeta^N(t))
\le 
\frac{2M}{N} \nu(t). 
\end{align}
\end{corollary}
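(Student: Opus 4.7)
The plan is to track the $\ell^1$-distance $D(t):=d(\xi^N(t),\zeta^N(t))$ as a pure jump functional of the coupled process, starting from $D(0)=0$, and bound the worst-case increment at each jump. The key observation is that, by the very definition of $\tau^N$, no mismatch event occurs on $[0,\tau^N)$; equivalently, for $t<\tau^N$ and every $\theta_m^{\prime\prime}\le t$ one has $\eta^N(\theta_m^{\prime\prime +})\ne\eta^N(\theta_m^{\prime\prime -})$ iff $\zeta^N(\theta_m^{\prime\prime +})\ne\zeta^N(\theta_m^{\prime\prime -})$. Since $\eta^N$ only jumps at the times $\theta_m$ of the Poisson process $\nu(t)$, this means that on $[0,\tau^N)$ both processes jump simultaneously and only at the $\theta_m$'s, and each jump of $\eta^N$ (which is a transposition along $\beta_m$) is paired with the ``first alternative'' in either \eqref{merge-choice} or \eqref{split-choice} for $\zeta^N$.

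For the two types of paired jumps I would argue separately. A merging jump corresponds to $\xi^N\mapsto \mathsf{M}_{ij}\xi^N$ and $\zeta^N\mapsto \mathsf{M}_{ij}\zeta^N$ with the same indices $(i,j)$, so by the contraction bound \eqref{merge-dist} of Lemma~\ref{lem:d-jumps}, the distance $D$ does not increase. A splitting jump corresponds to $\xi^N\mapsto \mathsf{S}_i^{k/m}\xi^N$, where $m=N\xi^N_i$ and $k\in\{1,\dots,m-1\}$ is the ``true'' cycle-split position dictated by the bond $\beta_m$, and to $\zeta^N\mapsto \mathsf{S}_i^{l/(N\zeta^N_i)}\zeta^N$ where $l$ is drawn with probability proportional to $w_m(k,l)+w_m(m-k,l)$. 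Applying \eqref{split-dist} with $u=k/m$, $v=l/(N\zeta^N_i)$ (so that $ux_i=k/N$ and $vy_i=l/N$) gives an increment bounded by $2|k-l|/N$; applying it instead after using the symmetry $\mathsf{S}_i^{k/m}=\mathsf{S}_i^{(m-k)/m}$ gives an increment bounded by $2|m-k-l|/N$.

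The crucial input is now the support of the mixing kernel $w_m(\cdot,\cdot)$ defined in \eqref{w-def}: inspection of both branches shows that $w_m(k,l)>0$ forces $|k-l|\le M$, and analogously $w_m(m-k,l)>0$ forces $|m-k-l|\le M$. Hence for every $l$ appearing in the first branch of \eqref{split-choice} with positive probability, at least one of $|k-l|$, $|m-k-l|$ is $\le M$, and consequently each splitting jump increases $D$ by at most $2M/N$. Summing these per-jump bounds, and noting that the total number of jumps of $\eta^N$ up to time $t$ equals $\nu(t)$ (with mergers contributing $0$ and splits at most $2M/N$), yields
\[
D(t)\ \le\ \frac{2M}{N}\,\nu(t)\qquad\text{for all }t<\tau^N,
\]
which is the desired inequality.

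I do not expect a real obstacle here: the proof is a direct bookkeeping exercise once the four ingredients---definition of $\tau^N$, the index-matched pairing of jumps in \eqref{merge-choice}--\eqref{split-choice}, the Lipschitz-type bounds \eqref{merge-dist}--\eqref{split-dist}, and the bandwidth-$M$ support of $w_m$---are lined up. The only point requiring a small amount of care is the use of the symmetry $\mathsf{S}_i^u=\mathsf{S}_i^{1-u}$ to cover the case where $l$ is close to $m-k$ rather than to $k$; this is precisely why the weights are symmetrized as $w_m(k,l)+w_m(m-k,l)$ in \eqref{split-choice}.
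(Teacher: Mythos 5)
Your proposal is correct and follows essentially the same route as the paper: restrict to times before $\tau^N$, note that both processes then jump simultaneously and only at the $\theta_m$'s, apply \eqref{merge-dist} for merges (no increment) and \eqref{split-dist} for splits (increment $\le 2M/N$), and sum over the $\nu(t)$ jumps.

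One detail you handle with more care than the paper does. The paper's own proof simply writes $2|k-l|/N \le 2M/N$, but this requires $|k-l|\le M$, which is not guaranteed when $l$ is sampled according to $w_m(k,\cdot)+w_m(m-k,\cdot)$: the support of the second summand only forces $|m-k-l|\le M$. Your observation that one should apply \eqref{split-dist} after identifying $\mathsf{S}_i^{k/m}=\mathsf{S}_i^{(m-k)/m}$, so that the increment is bounded by $\min\{2|k-l|,\,2|m-k-l|\}/N \le 2M/N$, is precisely the point that closes this small gap, and it explains why the split rates were symmetrized in the first place in \eqref{split-choice}. You also (implicitly) need the case $m<M+2$, where $w_m$ has full support; there $|k-l|\le m-2<M$ automatically, so the bound still holds. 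In short, the argument is the paper's, but spelled out a bit more rigorously at the one place where the paper is terse.
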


\begin{proof}
[Proof of Corollary \ref{cor:smalldistance}]
Indeed, up to the first mismatch time $\tau^N$ the distance  $d(\xi^N(t), \zeta^N(t))$ will change only at the jump times $\theta_m$, when the first alternative in \eqref{merge-choice} or \eqref{split-choice} occurs. When a merge-event, \eqref{merge-choice} occurs, according to \eqref{merge-dist} the distance  $d(\xi^N, \zeta^N)$ does not increase.  On the other hand, when a split-event, \eqref{split-choice} occurs, then, according to \eqref{split-dist} the distance  $d(\xi^N, \zeta^N)$ increases by at most 
\begin{align*}
2 \abs{\frac{k}{N\xi^N_i}\xi^N_i - \frac{l}{N\zeta^N_i}\zeta^N_i}
=
\frac{2\abs{k-l}}{N}
\leq
\frac{2\abs{M}}{N}.
\end{align*} 
\end{proof}
From  \eqref{smalldistance} and  \eqref{mismatch bound 1} we get
\begin{align*}
\varrho^{N}(t)\one_{ \lbr t < \tau^N\rbr}
\le 
\sum_{i,j} \abs{\wt X^{N}_{i,j}(t)} 
+ 
\sum_{j,k} \abs{\wt Z^{N}_{j,k}(t)}
+
\frac{{26}M}{N} \nu(t), 
\end{align*} 
and hence
\begin{align*}
\notag
\condprobab{\tau^{N}<T}{\left(\eta^{N}(t)\right)_{t\ge0}}
&
=
1-
\exp\left\{-\int_0^T 
\left(
\sum_{i,j}
\abs{{\wt X}^{N}_{i,j}(t)}
+
\sum_{j,k}
\abs{{\wt Z}^{N}_{j,k}(t)} 
+
\frac{2{6}M}{N} \nu(t)
\right)dt 
\right\}
\\
&
\le 
\int_0^T 
\left(
\sum_{i,j}
\abs{{\wt X}^{N}_{i,j}(t)}
+
\sum_{j,k}
\abs{{\wt Z}^{N}_{j,k}(t)} 
+
\frac{{26}M}{N} \nu(t)
\right)dt. 
\end{align*}
Hence, exploiting stationarity of the process $t\mapsto \eta^{N}(t)$ we obtain 
\begin{align*}
\notag
\probab{\tau^{N}<T}
\le 
&
\phantom{ + }
T
%\expect{
{\mathbf E}\!
	\left(
\sum_{i,j}
\abs{{\wt X}^{N}_{i,j}}
\one_{\{\xi^{N}_j<\varepsilon\}}
+
\sum_{j,k}
\abs{{\wt Z}^{N}_{j,k}} 
\one_{\{\xi^{N}_j<\varepsilon\}}
\right)
%}
\\
\notag
&
+ 
T
%\expect{
{\mathbf E}\!
\left(	
	%\left(
\sum_{i,j}
\abs{{\wt X}^{N}_{i,j}}
\one_{\{\xi^{N}_j\geq\varepsilon\}}
+
\sum_{j,k}
\abs{{\wt Z}^{N}_{j,k}} 
\one_{\{\xi^{N}_j\geq\varepsilon\}}
\right)
%}
\\
&
+ 
1{3} T^2\frac{M}{N},
\end{align*}
where $\varepsilon>0$ is fixed for the moment and will be { sent} to $0$ at the end of the argument.

Next, using the straightforward upper bound
\begin{align*}
\sum_{i}
\abs{{\wt X}^{N}_{i,j}}
+
\sum_{k}
\abs{{\wt Z}^{N}_{j,k}} 
\le
{\frac{6 N}{N-1} } \xi^{N}_{j}, 
\end{align*}
which is direct consequence of the definitions of the variables 
$X^{N}_{i,j}$, $Z^{N}_{j,k}$, 
{
$\wh X^{N}_{i,j}$, $\wh Z^{N}_{j,k}$, $\wt X^{N}_{i,j}$, $\wt Z^{N}_{j,k}$ in  \eqref{XNd-var}, \eqref{ZNd-var}\eqref{XNd-condfluct} and, {respectively},    \eqref{YNd-condfluct}
}, we get
\begin{align}
\notag
\probab{\tau^{N}<T}
\le 
&
\phantom{ + }
1{3} T^2\frac{M}{N}
+
{7} T
\sum_{j}
\expect{
\xi^N_j
\one_{\{\xi^{N}_j<\varepsilon\}}
}
\\
\label{mismatch bound 4}
&
+ 
T
\sum_{i,j}
\expect{
\condexpect{\abs{{\wt X}^{N}_{i,j}}}{\xi^{N}}
\one_{\{\xi^{N}_j\geq\varepsilon\}}
}
+
\sum_{j,k}
\expect{
\condexpect{\abs{{\wt Z}^{N}_{j,k}}}{\xi^{N}} 
\one_{\{\xi^{N}_j\geq\varepsilon\}}
}.
\end{align}

\subsection{Variance estimates.}
\label{sub:comp} 

In order to simplify the formulas below we use generic notation $\mathbf{P}$ and $\mathbf{E}$  for the probability and expectation with respect to the  uniform measure $\mu^N$ on $\Sigma^N$.
This section is devoted to the proof of the following Lemma. 

\begin{lemma}
\label{lem:condexpect XY bounds}
\begin{align}
\label{condexpect X bound}
\condexpect{\abs{{\wt X}^{N}_{i,j}}}{\xi^{N}}
&
\leq
C N^{-1/2}
\one_{\{i<j\}} 
\sqrt{\xi^{N}_{i}\xi^{N}_{j}}
\\
\label{condexpect Z bound}
\sum_k
\condexpect{\abs{{\wt Z}^{N}_{j,k}}}{\xi^{N}}
&
\leq
{
C N^{-1/4}
{\xi^{N}_{j}}
}
\end{align}
\end{lemma}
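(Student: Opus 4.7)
My overall plan is to reduce the two $L^1$-bounds to conditional variance estimates via Jensen's inequality,
\[
\condexpect{\abs{W}}{\xi^N}\leq \sqrt{\condvar{W}{\xi^N}},
\]
and then to compute the variances combinatorially. The crucial structural input is that, under $\mu^N$, conditional on $\xi^N$ the permutation $\eta^N$ is uniform on the set of permutations with the prescribed cycle-length profile $\vl(\xi^N)$; equivalently, one samples a uniformly random bijection between $\{1,\dots,N\}$ and the vertices of $\T^N$ and then cuts it into cycles of the prescribed lengths with uniformly random cyclic orderings. Consequently, for any fixed pattern of distinct sites on $\T^N$, joint probabilities of the events ``these sites carry labels in these specified cycles'' reduce to explicit hypergeometric sampling-without-replacement formulas. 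Lemma~\ref{lem:cond-rates} (alluded to in the excerpt) already identifies $\wh X^N_{i,j}$ and $\wh Z^N_{j,k}$ with the corresponding conditional means, so I only need bounds on conditional second moments.

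For \eqref{condexpect X bound} I would write
\[
\condvar{X^N_{i,j}}{\xi^N}
=\frac{1}{(dN)^2}\sum_{\mathsf b,\mathsf b'\in\B^N}\condcov{\varphi^N_{i,j,\mathsf b}}{\varphi^N_{i,j,\mathsf b'}}{\xi^N}
\]
and split the double sum according to whether $\mathsf b=\mathsf b'$, the bonds are incident (share exactly one vertex), or the bonds are disjoint. With $l_i=N\xi^N_i$, the diagonal contribution is at most $(dN)^{-1}\condexpect{\varphi^N_{i,j,\mathsf b}}{\xi^N}$, of order $\xi^N_i\xi^N_j/N$. There are $O(N)$ incident pairs, and each contributes a joint probability bounded by $C\xi^N_i\xi^N_j$, giving $O(\xi^N_i\xi^N_j/N)$ after normalization. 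For disjoint bonds, exchangeability yields
\[
\condexpect{\varphi^N_{i,j,\mathsf b}\varphi^N_{i,j,\mathsf b'}}{\xi^N}
= \frac{4\,l_i(l_i-1)l_j(l_j-1)}{N(N-1)(N-2)(N-3)},
\]
whose leading $4l_i^2l_j^2/N^4$ piece cancels the square of the mean $2l_il_j/(N(N-1))$, leaving a residual covariance bounded by $C\xi^N_i\xi^N_j/N^3$; the $O(N^2)$ disjoint pairs then again produce $O(\xi^N_i\xi^N_j/N)$. Summing the three pieces gives $\condvar{X^N_{i,j}}{\xi^N}\leq C\xi^N_i\xi^N_j/N$, and Jensen yields \eqref{condexpect X bound}.

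For \eqref{condexpect Z bound} the smoothening weights $w_{N\xi^N_j}$ are essential. Since $\sum_l w_{N\xi^N_j}(k,l)\wh Y^N_{j,l}=\wh Z^N_{j,k}$ (because $\wh Y^N_{j,l}$ does not depend on $l$ inside the relevant window) we have $\wt Z^N_{j,k}=\sum_l w_{N\xi^N_j}(k,l)\wt Y^N_{j,l}$. Using that the weights are supported on a window of $2M+1$ values with amplitudes bounded by $1/(2M+1)$, Cauchy--Schwarz gives
\[
\condvar{Z^N_{j,k}}{\xi^N}
\leq \frac{1}{2M+1}\max_l\condvar{Y^N_{j,l}}{\xi^N}+\max_{l\neq l'}\bigl|\condcov{Y^N_{j,l}}{Y^N_{j,l'}}{\xi^N}\bigr|.
\]
A diagonal/incident/disjoint decomposition for $\psi^N_{j,l,\mathsf b}$ analogous to the one above gives $\condvar{Y^N_{j,l}}{\xi^N}\leq C\xi^N_j/N^2$ and $\abs{\condcov{Y^N_{j,l}}{Y^N_{j,l'}}{\xi^N}}\leq C(\xi^N_j)^2/N^3$, hence $\condvar{Z^N_{j,k}}{\xi^N}\leq C\xi^N_j/(MN^2)$. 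Applying Jensen and summing over the at most $N\xi^N_j$ admissible values of $k$,
\[
\sum_k \condexpect{\abs{\wt Z^N_{j,k}}}{\xi^N}
\leq N\xi^N_j\sqrt{C\xi^N_j/(MN^2)}
= C(\xi^N_j)^{3/2}/\sqrt{M},
\]
and the claimed $CN^{-1/4}\xi^N_j$ bound follows once $M$ is fixed of order $N^{1/2}$ (to be chosen in the application).

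The main obstacle is the exact cancellation of the leading $l_i^2l_j^2/N^4$ term in the pairwise merger probability against the square of the conditional mean: a crude bound on $\condcov{\varphi^N_{i,j,\mathsf b}}{\varphi^N_{i,j,\mathsf b'}}{\xi^N}$ would produce a variance of order $\xi_i^2\xi_j^2$ rather than $\xi_i\xi_j/N$, which would be useless in \eqref{mismatch bound 4}. So one has to perform the expansion of falling factorials cleanly and verify that the subleading terms are genuinely of size $l_il_j(l_i+l_j)/N^5$ before summing. The role of the smoothening $w$ in the splitting rates is precisely to make a similar pointwise cancellation unnecessary: after averaging over a window of size $2M+1$, the crude diagonal bound on $\condvar{Y^N_{j,l}}{\xi^N}$ combined with the $1/(2M+1)$ damping is already enough.
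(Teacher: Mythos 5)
Your treatment of \eqref{condexpect X bound} follows the paper's route: Jensen reduces the claim to a conditional variance, which is decomposed over pairs of bonds by $\abs{\mathsf b\cap\mathsf c}$ and controlled via the multinomial/hypergeometric structure; the resulting bound $\condvar{X^N_{i,j}}{\xi^N}\leq C\xi^N_i\xi^N_j/N$ is exactly what is needed. (Your stated disjoint-bond covariance order $C\xi^N_i\xi^N_j/N^3$ is off --- the correct residual after cancellation is $C\xi^N_i\xi^N_j(\xi^N_i+\xi^N_j)/N$ as in \eqref{eq:var-phi}, which also explains why your subsequent claim that $O(N^2)$ disjoint pairs yield $O(\xi^N_i\xi^N_j/N)$ after the $(dN)^{-2}$ normalization is internally inconsistent with your covariance bound --- but the diagonal contribution already forces $C\xi^N_i\xi^N_j/N$, so the conclusion stands.)

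For \eqref{condexpect Z bound} there is a genuine gap. Your argument rests on the uniform-in-$k$ bound $\condvar{Z^N_{j,k}}{\xi^N}\leq C\xi^N_j/(MN^2)$, which you derive from the claim that all weights $w_m(k,l)$ are $\leq 1/(2M+1)$. That claim is false near the boundary: from \eqref{w-def}, for $k$ within $M$ of an endpoint of $\lbr 1,\dots,m-1\rbr$ the diagonal weight is larger, e.g.\ $w_m(1,1)=(M+1)/(2M+1)\approx 1/2$. For these $O(M)$ values of $k$ one only gets $\condvar{Z^N_{j,k}}{\xi^N}=O(\xi^N_j/N^2)$, a factor $M$ worse than your uniform bound, and this is precisely why \eqref{eq:var-bound} in the paper retains the indicator $\one_{\{\min\{k,N\xi^N_i-k\}\leq M\}}$ rather than a uniform-in-$k$ constant. (A further contribution of the same size arises near $k\approx N\xi^N_j/2$ from the $l\sim l'$ part of \eqref{eq:var-psi}, which your bound $\max_{l\neq l'}\abs{\condcov{Y^N_{j,l}}{Y^N_{j,l'}}{\xi^N}}\leq C(\xi^N_j)^2/N^3$ also misses --- the true max is $O(\xi^N_j/N^2)$, attained at $l'=N\xi^N_j-l$.) Because you then sum $\sqrt{\condvar{Z^N_{j,k}}{\xi^N}}$ term by term over $k$, the corrected boundary contribution is $O(M\sqrt{\xi^N_j}/N)=O(\sqrt{\xi^N_j}\,N^{-1/2})$ with $M=\sqrt N$, which is \emph{not} dominated by $C\xi^N_j N^{-1/4}$ once $\xi^N_j\ll N^{-1/2}$ (take $\xi^N_j\sim N^{-1}$: you get $N^{-1}$ versus the required $N^{-5/4}$). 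The paper avoids this by applying Cauchy--Schwarz over $k$, namely $\sum_k\sqrt{\condvar{Z^N_{j,k}}{\xi^N}}\leq\sqrt{N\xi^N_j}\cdot\sqrt{\sum_k\condvar{Z^N_{j,k}}{\xi^N}}$, which converts the $O(M)$ boundary contribution into $\xi^N_j\sqrt{M/N}=\xi^N_j N^{-1/4}$. You need both fixes: replace the uniform variance estimate with one that isolates the boundary $k$'s, and aggregate over $k$ via Cauchy--Schwarz rather than termwise.
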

Recall the variables $\varphi^{N}_{i,j,\mathsf{b}}$ and $\psi^{N}_{i,k,\mathsf{b}}$ from \eqref{phi} and \eqref{psi}. As we have already indicated,  variables 
${\wt X}^{N}_{i,j}$ and ${\wt Z}^{N}_{j,k}$ are centered under conditional expectation $\condexpect{\cdot}{\xi^{N}}
$. Here is the precise claim:

\begin{lemma} 
	\label{lem:condexp-phipsi} 
	\begin{align*}
	\condexpect{\varphi^{N}_{i,j,\mathsf{b}}}{\xi^N}
	&= { U_{i,j}^N (\xi^N)= }
	\frac{2N}{N-1} \one_{\{i<j\}}\xi^N_i \xi^N_j, 
	\\
	\condexpect{\psi^{N}_{i,l,\mathsf{b}}}{\xi^N}
	&= { V_{i,k} (\xi^N)= }
	\frac{1}{N-1} 
	\one_{\{ 1\leq l < N\xi^{N}_i\}}
	\xi^N_i .
	\end{align*}
	\end{lemma}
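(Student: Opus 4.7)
My plan is to evaluate both conditional expectations by direct enumeration under the measure $\condprobab{\cdot}{\xi^N}$, which is the uniform distribution on permutations in $\Sigma^N$ with the prescribed cycle-length profile. Under this measure, $\eta^N$ is equivalently described by (a) a uniform assignment of the $N$ vertex labels to cycle-blocks of the given sizes, and (b) a uniform cyclic orientation within each block. Two symmetries will be exploited: the $S_N$-action on vertex labels, and exchangeability among blocks of equal size.

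For the first identity, fix $\mathsf{b}=\{u,v\}$ and $i<j$, and set $l_i=N\xi^N_i,\ l_j=N\xi^N_j$. Using $S_N$-symmetry together with the standard Ewens count
\begin{equation*}
\#\{\sigma\in\Sigma^N : \vp(\sigma)=\xi^N\} = N!\prod_{k}\frac{1}{k^{a_k}\,a_k!},
\end{equation*}
I would first evaluate the number of $\sigma$ for which $u$ and $v$ lie in two different cycles of sizes $l_i$ and $l_j$. Averaging over the distribution of the tie-broken labels $\cC^N_i,\,\cC^N_j$ among the equal-sized cycles then reduces the count to
\begin{equation*}
\condexpect{\varphi^N_{i,j,\mathsf{b}}}{\xi^N} = \frac{2l_il_j}{N(N-1)} = \frac{2N}{N-1}\one_{\{i<j\}}\xi^N_i\xi^N_j = U^N_{i,j}(\xi^N).
\end{equation*}

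For the second identity I would condition further on $\{u,v\}\subset\cC^N_i$; by the same averaging argument this event has probability $l_i(l_i-1)/(N(N-1))$. Given it, layer (b) of the conditional law makes the cyclic orientation of $\cC^N_i$ uniform, so (pinning $u$ at position $0$) the forward cyclic distance to $v$ is uniform on $\{1,\ldots,m-1\}$ with $m=l_i$. Since the event $\{\cC^N_i\stackrel{\mathsf{b},l}{\longleftrightarrow}\cC^N_i\}$ identifies forward distances $l$ and $m-l$, its conditional probability is $2/(m-1)$ when $l\neq m/2$ and $1/(m-1)$ when $l=m/2$. The pre-factor $\tfrac12\one_{\{l\neq m/2\}}+\one_{\{l=m/2\}}$ built into $\psi^N_{i,l,\mathsf{b}}$ is precisely what compensates this dichotomy, yielding in both cases the common value
\begin{equation*}
\condexpect{\psi^N_{i,l,\mathsf{b}}}{\xi^N} = \frac{l_i(l_i-1)}{N(N-1)}\cdot\frac{1}{m-1} = \frac{\xi^N_i}{N-1}\one_{\{1\leq l< N\xi^N_i\}} = V^N_{i,l}(\xi^N).
\end{equation*}

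The main subtlety is the lexicographic tie-breaking in the ordering $\cC^N_1,\cC^N_2,\ldots$: when the profile contains repeated cycle sizes, the assignment of vertices to a specifically-labeled cycle is not $S_N$-invariant on its own, and a careless application of vertex-symmetry alone produces the wrong per-vertex marginal. The fix, to be applied throughout, is to work with joint events involving the unordered pair of cycles $\{C(u),C(v)\}$ rather than marginal statements like $\condprobab{u\in\cC^N_i}{\xi^N}$, and then to divide by the number of tie-equivalent orderings; only the combination of both symmetries — vertex $S_N$-symmetry and exchangeability among equal-sized cycles — closes the computation.
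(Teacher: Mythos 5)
Your overall framework --- the conditional measure given $\xi^N$ is uniform on permutations with the prescribed cycle type, and vertex-label symmetry reduces the computation to counting --- is exactly the unspelled ``straightforward combinatorics'' the paper alludes to, and your treatment of the second identity, including the role of the prefactor $\tfrac12\one_{\{l\neq m/2\}}+\one_{\{l=m/2\}}$ compensating the $l\leftrightarrow m-l$ identification built into $\psi^{N}_{i,l,\mathsf{b}}$, is correct. However, the tie-breaking subtlety you rightly flag is not actually closed by the fix you propose, and in fact the per-bond statement of the lemma is \emph{false} in the presence of ties. Under the paper's tie-break by largest element, equal-sized cycles are \emph{not} exchangeable conditionally on the unordered partition: which block carries the label $\cC_i$ among the equal-sized ones is determined by the vertex labels inside it, hence is correlated with whether a fixed vertex $u$ lands there. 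Concretely, take $N=4$, profile $(l_1,l_2,l_3)=(2,1,1)$, indices $(i,j)=(1,2)$, and let $\mathsf{b}$ be the bond joining the two smallest vertex labels. Whenever exactly one endpoint of $\mathsf{b}$ is a fixed point it is automatically the smaller of the two fixed points, hence sits in $\cC_3$; and when both are fixed points the bond connects $\cC_2$ with $\cC_3$. Therefore $\condexpect{\varphi^{N}_{1,2,\mathsf{b}}}{\xi^N}=0$, while the lemma asserts $\tfrac13$. ``Dividing by the number of tie-equivalent orderings'' cannot repair this, because those orderings are simply not equiprobable given the partition.

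This imprecision is inherited from the paper itself: the ``multinomial'' description in the Appendix is valid for the unordered partition but not after the deterministic tie-break, and the paper offers no proof of the lemma. What the variance estimates of Subsection~\ref{sub:comp} actually need downstream is only the bond-summed centering $\condexpect{X^{N}_{i,j}}{\xi^N}=U^{N}_{i,j}(\xi^N)$ (and likewise for $Z^{N}_{j,k}$), and this is the statement you should really target. One can check in examples that the per-bond errors cancel when summed over the regular edge-set of $\T^N$, so the centering does hold, but it requires an argument of its own --- not the per-bond claim followed by a naive average over tie-equivalent orderings.
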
 
\begin{proof}
[{Proof of Lemma~\ref{lem:condexp-phipsi}}]
The proof is a straightforward combinatorics for uniform
 distribution on $\Sigma^N$. 
\end{proof}

Consequently, the expressions on the left hand side of 
\eqref{condexpect X bound} and 
\eqref{condexpect Z bound} are bounded above as, 
\begin{align}
\label{condexpect X bound-var}
\condexpect{\abs{{\wt X}^{N}_{i,j}}}{\xi^{N}} 
&
\leq
\sqrt{\condvar{\frac{1}{dN}\sum_{\mathsf{b} \in \B^{N}}
		\varphi^{N}_{i,j,\mathsf{b}}}{\xi^{N}}}
	= \frac{1}{dN} \sqrt{\sum_{\mathsf{b,c} \in \B^{N}}
	\condcov{\varphi^{N}_{i,j,\mathsf{b}}}{\varphi^{N}_{i,j,\mathsf{c}}}{\xi^N}
}
%C N^{-1/2}
%\one_{\{i<j\}} 
%\sqrt{\xi^{N}_{i}\xi^{N}_{j}}
\\
\notag
%\sum_k
\condexpect{\abs{{\wt Z}^{N}_{j,k}}}{\xi^{N}}
&
\leq 
\sqrt{
	\condvar{\frac{1}{dN}\sum_{\mathsf{b} \in \B^{N}}
\sum_{l}
w_{N\xi^N_j}(k,l)
\psi^{N}_{j,l,\mathsf{b}}		}{\xi^{N}}}
\\ 
\label{condexpect Z bound-var}
&= \frac{1}{dN} 
\sqrt{ 
\sum_{\mathsf{b ,c} \in \B^{N}}	
\sum_{l,l^\prime} 
w_{N\xi^N_i}(k,l)w_{N\xi^N_i}(k,l^\prime)
\condcov{\psi^N_{i,l,\mathsf{b}}}{\psi^N_{i,l^\prime,\mathsf{c}}}{\xi^N}
}
%C N^{-1/3}
%\sqrt{\xi^{N}_{i}}
\end{align}

In the following lemma we summarize the computational details on which the proof of Lemma~\ref{lem:condexpect XY bounds} relies. 
\begin{lemma}
	\label{lem:cond-bounds} 
	There exists a constant $C <\infty$ such that 
the following upper bounds hold uniformly in 
$N= n^d; n\in \mathbb{N}$: 
\begin{align} 
\label{eq:var-phi}
\abs{\condcov{\varphi^{N}_{i,j,\mathsf{b}}}{\varphi^{N}_{i,j,\mathsf{c}}}{\xi^N} } &\leq C\xi_i^N\xi_j^N\lb 
%\one_{\mathsf{b}=\mathsf{c}} 
%+ \frac{1}{N} 
\one_{\abs{\mathsf{b}\cap\mathsf{c}} \leq 1} 
+ \frac{\xi^N_i +\xi_j^N}{N} \one_{\mathsf{b}\cap\mathsf{c}=\emptyset} 
\rb .\\
\label{eq:var-psi}
\abs{
\condcov{\psi^N_{i,l,\mathsf{b}}}{\psi^N_{i,l^\prime,\mathsf{c}}}{\xi^N}
}
&\leq 
C\frac{\xi^N_i}{N}\lb 
\one_{\mathsf{b}=\mathsf{c}}  \one_{l\sim l^\prime}
+ \frac{1}{N} \one_{\abs{\mathsf{b}\cap\mathsf{c}}=1} 
+ \frac{\xi^N_i}{N^2} \one_{\mathsf{b}\cap\mathsf{c}=\emptyset} 
\rb .
\end{align}
Above $l\sim l^\prime$ means that either $l= l^\prime$ or 
$l = N\xi^N_i -l^\prime$. 
\end{lemma}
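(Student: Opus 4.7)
The plan is to reduce both bounds to direct combinatorial computations under the uniform measure on a conjugacy class. Conditional on $\xi^N$, the permutation $\eta^N$ is uniform on all permutations with the prescribed cycle-length profile, and this conditional measure factors into (i) a uniformly random assignment of vertices to cycle-labels with the prescribed set-sizes $N\xi^N_k$, and (ii), independently, a uniformly random cyclic ordering within each cycle. The indicator $\varphi^N_{i,j,\mathsf{b}}$ depends only on (i), while $\psi^N_{i,l,\mathsf{b}}$ depends on both. For each of the two bounds I split the analysis by the overlap $|\mathsf{b}\cap\mathsf{c}|\in\{0,1,2\}$.

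For \eqref{eq:var-phi}: in the case $\mathsf{b}=\mathsf{c}$ the covariance is bounded by the expectation $\condexpect{\varphi^N_{i,j,\mathsf{b}}}{\xi^N} = O(\xi^N_i\xi^N_j)$ from Lemma~\ref{lem:condexp-phipsi}. For $|\mathsf{b}\cap\mathsf{c}|=1$ the three distinct vertices of $\mathsf{b}\cup\mathsf{c}$ must occupy $\cC_i\cup\cC_j$ with the shared vertex on one cycle and the other two on the other; a direct count gives a joint probability of order $\xi^N_i\xi^N_j(\xi^N_i+\xi^N_j)$, whence covariance $O(\xi^N_i\xi^N_j)$. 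For $\mathsf{b}\cap\mathsf{c}=\emptyset$, enumeration over the four ways the four endpoints can split between $\cC_i$ and $\cC_j$ gives
\[
\condexpect{\varphi^N_{i,j,\mathsf{b}}\varphi^N_{i,j,\mathsf{c}}}{\xi^N} = 4\xi^N_i\xi^N_j\cdot\frac{(N\xi^N_i-1)(N\xi^N_j-1)}{(N-2)(N-3)},
\]
which, after expansion to order $1/N^2$ and subtraction of $\bigl(\frac{2N}{N-1}\xi^N_i\xi^N_j\bigr)^2$, yields a residual of order $\xi^N_i\xi^N_j(\xi^N_i+\xi^N_j)/N$, as claimed.

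For \eqref{eq:var-psi} the same three-case split applies, with ingredient (ii) now in play. If $\mathsf{b}=\mathsf{c}$, then $\psi^N_{i,l,\mathsf{b}}\psi^N_{i,l^\prime,\mathsf{b}}$ vanishes unless $l\sim l^\prime$, in which case the conditional variance is at most $\condexpect{\psi^N_{i,l,\mathsf{b}}}{\xi^N} = O(\xi^N_i/N)$. For $|\mathsf{b}\cap\mathsf{c}|=1$ the three distinct endpoints of $\mathsf{b}\cup\mathsf{c}$ must lie in $\cC_i$ with prescribed cyclic distances from the shared vertex; since cyclic distance is unsigned, only $O(1)$ cycle-position configurations are admissible, and a count yields a joint probability of order $\xi^N_i/N^2$, which already meets the claimed bound. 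For $\mathsf{b}\cap\mathsf{c}=\emptyset$ the four endpoints lie in $\cC_i$ with two prescribed cyclic distances, producing a joint probability of order $(\xi^N_i)^2/N^2$; here the leading term agrees with the product of marginals $(\xi^N_i/(N-1))^2$ and the stated bound $(\xi^N_i)^2/N^3$ appears as a subleading correction.

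The principal technical obstacle is the cancellation in the disjoint-bond subcase of both \eqref{eq:var-phi} and \eqref{eq:var-psi}: in each, the joint probability and the product of marginals agree to leading order in $1/N$, and the claimed bound emerges only after Taylor-expanding the relevant ratios of falling factorials to order $1/N^2$ and honestly matching the subleading terms. For $\psi$ there is some additional bookkeeping due to the $\one_{l=N\xi^N_i/2}$ correction in \eqref{psi} and the identification $l\sim N\xi^N_i-l$, but no conceptual novelty beyond the $\varphi$ computation.
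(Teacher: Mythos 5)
Your strategy --- compute the conditional covariances directly from the fact that, given the cycle-length profile $\xi^N$, the uniform measure on $\Sigma^N$ decomposes into a uniformly random assignment of vertices to blocks of sizes $N\xi^N_j$ times independent uniform cyclic orderings inside each block, and then split by the overlap $|\mathsf{b}\cap\mathsf{c}|\in\{0,1,2\}$ --- is exactly the route the paper takes. The paper just packages it differently: it records the exact closed forms $\condexpect{\varphi^{N}_{i,j,\mathsf{b}}\varphi^{N}_{i,j,\mathsf{c}}}{\xi^N}$ and $\condexpect{\psi^{N}_{i,l,\mathsf{b}}\psi^{N}_{i,l^{\prime},\mathsf{c}}}{\xi^N}$ in Lemma~\ref{lem:cond-rates} (Appendix~\ref{app:A}) and reads off Lemma~\ref{lem:cond-bounds} from those. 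There is no genuine difference in method, and your order-of-magnitude estimates for \eqref{eq:var-phi} are correct (the missing factor of $N/(N-1)$ in your expression for the disjoint-bond joint expectation is a typo with no effect).

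Two places in your treatment of \eqref{eq:var-psi} would not survive being made precise, however. First, in the case $\mathsf{b}=\mathsf{c}$ with $l\not\sim l^{\prime}$ you correctly note that $\psi^{N}_{i,l,\mathsf{b}}\psi^{N}_{i,l^{\prime},\mathsf{b}}\equiv0$, but this kills only the joint expectation, not the covariance: the product of marginals $\bigl(\xi^N_i/(N-1)\bigr)^2$ still subtracts off, so the covariance is of order $(\xi^N_i)^2/N^2$ and \emph{not} zero, whereas the claimed bound in \eqref{eq:var-psi} evaluates to $0$ there. Second, in the $\mathsf{b}\cap\mathsf{c}=\emptyset$ case, carrying out the Taylor expansion of the falling-factorial ratios honestly gives a covariance of order $\xi^N_i(1-\xi^N_i)/N^3$, not $(\xi^N_i)^2/N^3$; the extra factor of $\xi^N_i$ you assert does not materialize when $\xi^N_i$ is small. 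Both discrepancies are inherited from the way Lemma~\ref{lem:cond-bounds} is phrased (the offending covariances are negative, so they can be dropped when summing to bound the nonnegative variance $\condvar{Z^{N}_{i,k}}{\xi^N}$ from above, and the downstream estimate \eqref{eq:var-bound} is unaffected), but a complete proof needs to surface and handle them rather than asserting the one-sided bounds as absolute-value bounds. As written, your argument for those two sub-cases claims something that the actual computation contradicts.
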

\begin{proof}
[Proof of Lemma \ref{lem:cond-bounds}]
{The bounds \eqref{eq:var-phi} and \eqref{eq:var-psi}  follow directly from the exact formulas \eqref{cond-phi-squared} and \eqref{cond-psi-squared} stated in Lemma~\ref{lem:cond-rates} of the Appendix. }
\end{proof}

\begin{proof}
	[Proof of Lemma \ref{lem:condexpect XY bounds}]
	From
	\eqref{condexpect X bound-var} and 
	\eqref{eq:var-phi} 
	it follows that 
	\begin{align}
	\label{eq:tX-var}
	\condexpect{\abs{\wt  X^{N}_{i,j}}}{\xi^N}^2
	\leq  	\condvar{ X^{N}_{i,j}}{\xi^N} =
	\phantom{ + }
	\frac{1}{(Nd)^2}
	%\sum_{\stackrel {\mathsf{b},
	 \sum_{\mathsf{b},\mathsf{c}\in\B^{N}}
	 %{\abs{\mathsf{b}\cap\mathsf{c}}=2,1}}
	\condcov{\varphi^N_{i,j,\mathsf{b}}}{\varphi^N_{i,j,\mathsf{c}}}{\xi^N}
	\leq
	\frac{C^\prime}{N}\xi^N_i\xi^N_j.
	\end{align}
In the last step we use \eqref{eq:var-phi}  in a straightforward way.  \eqref{condexpect X bound} follows. 
	
	%We start as before: 
	Turning to \eqref{condexpect Z bound}, point-wise covariance estimates
	\eqref{eq:var-psi} imply that 
	\begin{align}
	\notag
	\condvar{ Z^{N}_{i,k}}{\xi^N}
	\leq
	&
	\phantom{ + }
	\frac{1}{(Nd)^2}
	\sum_{
		%\stackrel {
		\mathsf{b}, \mathsf{c}\in\B^{N}}
	%{\abs{\mathsf{b}\cap\mathsf{c}}=2,1}}
	\sum_{l,l^\prime 
		%\in  U_{N\xi^N_i}
	}
	w_{N\xi^N_i}(k,l)w_{N\xi^N_i}(k,l^\prime)
	\condcov{\psi^N_{i,l,\mathsf{b}}}{\psi^N_{i,l^\prime,\mathsf{c}}}{\xi^N}
	\\
	\notag
	&
	\leq 
		\frac{1}{(Nd)^2}
	\sum_{\mathsf{b}\in\B^{N}}\frac{C\xi^N_i}{N}
	\sum_l \lb w_{N\xi^N_i}(k,l)^2 + 
	w_{N\xi^N_i}(k,l)w_{N\xi^N_i}(k,N\xi^N_i -l )\rb \\
	\notag 
	&+
	\frac{1}{(Nd)^2}
	\sum_{\stackrel {\mathsf{b}, \mathsf{c}\in\B^{N}}{\abs{\mathsf{b}\cap\mathsf{c}}=1}}
	\frac{C\xi^N_i}{N^2}
	\sum_{l,l^\prime
		%\in  U_{N\xi^N_i}
	}
	w_{N\xi^N_i}(k,l)w_{N\xi^N_i}(k,l^\prime )
	 \\ 
	&+
	\frac{1}{(Nd)^2}
	\sum_{\stackrel {\mathsf{b}, \mathsf{c}\in\B^{N}}{\mathsf{b}\cap \mathsf{c} =\emptyset}} 
	\frac{C (\xi^N_i )^2}{N^3}
	\sum_{l,l^\prime
		%\in  U_{N\xi^N_i}
	}
	w_{N\xi^N_i}(k,l)w_{N\xi^N_i}(k,l^\prime )
	\label{expect-ZND-squared}
	\end{align}
	The last two terms on the right hand side above are of order $1/N^3$. 
	From the definition \eqref{w-def} of the weights $w_m(k,l)$ it follows in  a straightforward way that 
	\begin{align*}
	\sum_{l} 
	\lb w_m(k,l)^2 + w_m (k , l )w_m (k, m-l)\rb 
	\leq
	\one_{\{\min\{k, m-k\}\leq M\}\}}
	+
	\frac{1}{2M+1}. 
	\end{align*}
	Plugging this into \eqref{expect-ZND-squared}, finally we get 
	%\begin{align*}
	\be{eq:var-bound}
	\condvar{ Z^{N}_{i,k}}{\xi^N}
	\leq
	C^\prime \xi^N_i 
	\left(
	\frac{1}{N^2}
	\one_{ \{ \min\{ k,  N\xi^N_i -k\} \leq M\} }
	+
	\frac{1}{N^2 M}
	+
	\frac{1}{N^3}
	\right)
	\one_{\{1\leq k< N\xi^N_i\}}
	%\end{align*}
	\ee
	and hence, via Schwarz
	\begin{align*}
	\sum_{k=1}^{N\xi^N_i-1}
	\condexpect{\abs{\wt  Z^{N}_{i,k}}}{\xi^N}
	&\leq \sqrt{N\xi^N_i-1}\cdot \sqrt{\sum_k \condvar{ Z^{N}_{i,k}}{\xi^N}} \\
	&\leq 
	C^{\prime\prime} {\xi^N_i} 
	\sqrt{
	\frac{M}{N}
	+
	\frac{\xi^N_i}{{M}}
	+
	\frac{\xi^N_i}{{N}}
}.
	\end{align*}
	Finally, choosing $M=N^{1/2}$ we arrive at \eqref{condexpect Z bound}.
\end{proof}

\subsection{Proof of Theorem \ref{thm:coupl} -- concluded}
\label{sub:concluded}
Plugging ${M=N^{1/2}}$, \eqref{condexpect X bound} and \eqref{condexpect Z bound} into \eqref{mismatch bound 4} we obtain:
\begin{align}
\label{mismatch bound 5}
\probab{\tau^{N}<T}
\le 
&
C N^{-{1/2} }T^2
+
C  T
\sum_{j}
\expect{
	\xi^N_j
	\one_{\{\xi^{N}_j<\varepsilon\}}
}
+
\\
\notag
&
C N^{-1/2} T
\sum_{i<j}
\expect{\sqrt{\xi^{N}_i\xi^{N}_j}
	\one_{\{\xi^{N}_j\geq\varepsilon\}}
}
+
{
C N^{-1/4} T
\sum_{j}
\expect{{\xi^{N}_j}
	\one_{\{\xi^{N}_j\geq\varepsilon\}}
}.
}
\end{align}

\begin{lemma}
\label{lem:eps-to-zero-N-to-infty}
\begin{align}
\label{sum-xi-small-eps}
&
\lim_{\varepsilon\to0}
\lim_{N\to\infty}
\expect{
\sum_{j}
\xi^N_j
\one_{\{\xi^{N}_j<\varepsilon\}}
}
=
\lim_{\varepsilon\to0}
\expect{
\sum_{j}
\xi_j
\one_{\{\xi_j<\varepsilon\}}
}
=0,
\\
\label{sum-sqrt-xi-large-eps}
&
\lim_{\varepsilon\to0}
\lim_{N\to\infty}
\expect{
\sum_{j}
\sqrt{\xi^N_j}
\one_{\{\xi^{N}_j\geq\varepsilon\}}
}
=
\expect{
\sum_{j}
\sqrt{\xi_j}
}
<
\infty,
\\
\label{sum-sqrt-xi-squared-large-eps}
&
\lim_{\varepsilon\to0}
\lim_{N\to\infty}
\expect{
\sum_{i, j}
\sqrt{\xi^N_i\xi^N_j}
\one_{\{\max\{\xi^{N}_j,\xi^{N}_i\}\geq\varepsilon\}}
}
=
\expect{
\big(\sum_{j}
\sqrt{\xi_j}\big)^2
}
<
\infty.
\end{align}
\end{lemma}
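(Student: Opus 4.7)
The plan is to avoid passing to the limit in $\xi^N \Rightarrow \xi$ directly --- the functionals in \eqref{sum-xi-small-eps}--\eqref{sum-sqrt-xi-squared-large-eps} are unbounded near the origin and no standard weak-convergence tightness gives the needed uniform integrability --- and instead evaluate every expectation on both sides exactly and match. The finite-$N$ inputs are the three classical Ewens factorial moments
\begin{equation*}
\expect{a_k} = \frac{1}{k}\one_{\{k\leq N\}},\quad
\expect{a_k(a_k-1)} = \frac{1}{k^2}\one_{\{2k\leq N\}},\quad
\expect{a_k a_l} = \frac{1}{kl}\one_{\{k+l\leq N\}}\ (k\neq l),
\end{equation*}
all obtained directly from \eqref{Ewens}. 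The limiting inputs are the one- and two-point Poisson--Dirichlet correlation densities
\begin{equation*}
\expect{\sum_j f(\xi_j)} = \int_0^1 \frac{f(x)}{x}\,dx,
\qquad
\expect{\sum_{i\neq j} g(\xi_i,\xi_j)} = \iint_{x+y\leq 1}\frac{g(x,y)}{xy}\,dx\,dy,
\end{equation*}
valid at $\theta=1$ for nonnegative measurable $f,g$ vanishing at $0$; both follow from the Poisson process representation \eqref{xi-representation}.

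The identities \eqref{sum-xi-small-eps} and \eqref{sum-sqrt-xi-large-eps} are immediate from the first Ewens moment. For \eqref{sum-xi-small-eps}, $\expect{\sum_j\xi^N_j\one_{\{\xi^N_j<\varepsilon\}}} = \lfloor N\varepsilon\rfloor/N \to \varepsilon \to 0$, and the $\mathsf{PD}(1)$ formula with $f(x)=x\one_{\{x<\varepsilon\}}$ returns the same value $\varepsilon$. For \eqref{sum-sqrt-xi-large-eps}, $\expect{\sum_j\sqrt{\xi^N_j}\,\one_{\{\xi^N_j\geq\varepsilon\}}} = N^{-1/2}\sum_{k=\lceil N\varepsilon\rceil}^N k^{-1/2}$ is a Riemann sum for $\int_\varepsilon^1 x^{-1/2}\,dx = 2(1-\sqrt{\varepsilon})$, whose $\varepsilon\to 0$ limit is $2 = \expect{\sum_j\sqrt{\xi_j}}$.

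The substantive case is \eqref{sum-sqrt-xi-squared-large-eps}. I would split the sum over $(i,j)$ into its diagonal ($i=j$) and off-diagonal ($i\neq j$) parts. The diagonal piece is $\sum_j\xi^N_j\one_{\{\xi^N_j\geq\varepsilon\}}$, with expectation $1-\lfloor N\varepsilon\rfloor/N\to 1$. The off-diagonal piece, expanded via the factorial-moment identities above, becomes a sum over cycle-length pairs $(k,l)$ with weight $\sqrt{kl}/N$. A key bookkeeping observation is that the diagonal term $k=l$ contributes $\expect{a_k(a_k-1)}\cdot k/N = 1/(Nk)$, which is precisely the value $1/(N\sqrt{kl})$ extrapolated to $k=l$, so the off-diagonal expectation collapses to the clean single sum
\begin{equation*}
\sum_{\substack{k,l\geq 1,\ k+l\leq N\\ \max(k,l)\geq N\varepsilon}}\frac{1}{N\sqrt{kl}}.
\end{equation*}
This is a Riemann approximation of $\iint_{x+y\leq 1,\ \max\geq\varepsilon}(xy)^{-1/2}\,dx\,dy$. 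The integrand is integrable over the full triangle since $\iint_{x+y\leq 1}(xy)^{-1/2}\,dx\,dy = 2B(1/2,3/2) = \pi$, so for each fixed $\varepsilon>0$ the Riemann sum converges to the integral, and monotone convergence as $\varepsilon\to 0$ produces $\pi$. The total limit is therefore $1+\pi$, matching $\expect{(\sum_j\sqrt{\xi_j})^2} = \expect{\sum_j\xi_j} + \iint_{x+y\leq 1}(xy)^{-1/2}\,dx\,dy = 1+\pi$ via the two $\mathsf{PD}(1)$ correlation formulas. The only genuinely delicate point is the uniform-in-$\varepsilon$ control of the singular integrand, provided by $\iint_{\min(x,y)<\delta,\,x+y\leq 1}(xy)^{-1/2}\,dx\,dy = O(\sqrt{\delta})$ together with its discrete counterpart, which justifies exchanging the Riemann-sum and integral limits.
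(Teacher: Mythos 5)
Your proof is correct and takes a genuinely different route from the paper's. The paper argues softly: for the $N\to\infty$ limits it invokes weak convergence $\pi^N\Rightarrow\pi$ together with the uniform-in-$N$ bounds $\sum_j\xi^N_j=1$ and $\sum_j\sqrt{\xi^N_j}\,\one_{\{\xi^N_j\geq\varepsilon\}}\leq\varepsilon^{-1/2}$ (and dominated convergence), for the $\varepsilon\to0$ limits it uses monotone convergence, and for the finiteness of $\expect{(\sum_j\sqrt{\xi_j})^2}$ it bounds this quantity by $\expect{(\sum_k\sqrt{\zeta_k/\zeta_1})^2}$ for the unnormalized Poisson point process $(\zeta_k)$ and then shows that $\sum_k\sqrt{\zeta_k/\zeta_1}$ has a finite moment generating function on all of $\R$ by explicit computation. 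You bypass weak convergence entirely and compute all expectations exactly on both sides via Ewens factorial moments and the $\mathsf{PD}(1)$ correlation densities, establishing the finite-$N$ quantities as Riemann sums for the corresponding singular integrals. What your route buys is the exact constants ($\varepsilon$, $2(1-\sqrt\varepsilon)$, and $1+\pi$) and it sidesteps a small point that the paper glosses over: the functionals $\vp\mapsto\sum_j\sqrt{p_j}\,\one_{\{p_j\geq\varepsilon\}}$ are discontinuous on $\Omega$, so the weak-convergence step quietly needs that the discontinuity set has $\pi$-measure zero. What the paper's route buys is that the PPP moment-generating-function trick generalizes painlessly to other test functions and to $\mathsf{PD}(\theta)$ without new enumerative computations. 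One cosmetic remark on your write-up: since the lemma's iterated limit sends $N\to\infty$ first at \emph{fixed} $\varepsilon>0$ and only then $\varepsilon\to0$, you do not actually need the uniform-in-$\varepsilon$ $O(\sqrt\delta)$ tail bound to exchange limits; for each fixed $\varepsilon$ the Riemann sum converges (the axes contribute $O(N^{-1/2})$), and $\varepsilon\to0$ is then monotone convergence of the integral. Your bound is still the right observation to record, since it is what one would use to upgrade the conclusion to a joint limit.
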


\begin{proof}
[Proof of Lemma \ref{lem:eps-to-zero-N-to-infty}]
The $N\to\infty$ limits follow from uniform-in-$N$ boundedness: 
\begin{align*}
\sum_{j}
\xi^N_j
= 
1, 
\qquad
\sum_{j}
\sqrt{\xi^N_j}
\one_{\{\xi^{N}_j\geq\varepsilon\}}
\leq
\varepsilon^{-1/2}, 
\end{align*}
and dominated convergence. The $\vareps\to0$ limits follow from monotone convergence. 

It remains to prove the upper bound in \eqref{sum-sqrt-xi-squared-large-eps}. We will use the representation \eqref{xi-representation} of the joint distribution of the random variables $(\xi_i)_{i\geq1}$. Let $(\zeta_k)_{k\geq1}$ be the decreasingly ordered points of a Poisson point process on $\R_+$ with intensity $m(dt)=t^{-1}e^{-t}dt$. Then
\begin{align*}
\expect{\big(\sum_{k\geq1}\sqrt{\xi_k}\big)^2}
\leq
\expect{\big(\sum_{k\geq1}\sqrt{\zeta_k/\zeta_1}\big)^2}.
\end{align*}
However, the moment generating function of the random variable $\sum_{k\geq1}\sqrt{\zeta_k/\zeta_1}$ is explicitly computable, and finite for any $u\in\R$: 
\begin{align*}
\notag
\expect{\exp\{u \sum_{k\geq1}\sqrt{\zeta_k/\zeta_1} \}}
&
=
\int_0^\infty 
\condexpect{\exp\{u \sum_{k\geq1}\sqrt{\zeta_k/z}\} }{\zeta_1=z}
e^{-m([z,\infty))} dm(z).
\\
\notag
&
=
e^u
\int_0^\infty 
\exp\{\int_0^z (e^{u\sqrt{x/z}}-1) dm(x)\}
e^{-m([z,\infty))} dm(z)
\\
\notag
&
=
e^u
\int_0^\infty 
\exp\{\int_0^1 \frac{e^{u\sqrt{y}}-1}{y} e^{-zy}dy\}
e^{-m([z,\infty))} dm(z)
\\
&
\leq
e^u
\exp\{\int_0^1 \frac{e^{u\sqrt{y}}-1}{y}dy\}
<\infty. 
\end{align*}
\end{proof}

From \eqref{mismatch bound 5} and  \eqref{sum-xi-small-eps}, \eqref{sum-sqrt-xi-large-eps}, \eqref{sum-sqrt-xi-squared-large-eps} we conclude:

{
\begin{corollary}
\label{cor:mismatch time}
There exists a sequence $N\mapsto T^{*}(N)$ such that $\lim_{N\to\infty}T^{*}(N)=\infty$,  $\lim_{N\to\infty}N^{-1/4}T^{*}(N)=0$, and 
\begin{align*}
\lim_{N\to\infty}
\probab{\tau^N < T^{*}(N)}
=0.
\end{align*}
\end{corollary}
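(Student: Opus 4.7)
The plan is to read off Corollary~\ref{cor:mismatch time} directly from the upper bound \eqref{mismatch bound 5} by balancing its four summands. The first and fourth terms, $C N^{-1/2} T^2$ and $C N^{-1/4} T$, already tell us that $T = T^{*}(N)$ must satisfy $T^{*}(N) = o(N^{1/4})$, which is exactly the normalization we want anyway. The third summand, $C N^{-1/2} T \sum_{i<j} \expect{\sqrt{\xi^{N}_i \xi^{N}_j}\,\one_{\{\xi^{N}_j\geq \varepsilon\}}}$, is automatically harmless because by \eqref{sum-sqrt-xi-squared-large-eps} its $\varepsilon$-dependent factor is bounded uniformly in $N$ and $\varepsilon$ by a finite constant $K$. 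The one delicate contribution is the second summand $C T\, \expect{\sum_j \xi^{N}_j \,\one_{\{\xi^{N}_j < \varepsilon\}}}$: for any fixed $\varepsilon > 0$ this prefactor stays of order one as $N \to \infty$, so one cannot simply let $T\to \infty$ without simultaneously driving $\varepsilon$ to zero.

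To handle this, I abbreviate $g_N(\varepsilon) := \expect{\sum_j \xi^{N}_j \,\one_{\{\xi^{N}_j < \varepsilon\}}}$ and invoke \eqref{sum-xi-small-eps} in the form $\lim_{\varepsilon\to 0} \lim_{N\to \infty} g_N(\varepsilon) = 0$. A standard diagonal extraction then produces a sequence $\varepsilon_N \downarrow 0$ with $h(N) := g_N(\varepsilon_N) \to 0$: for each $n \in \N$ pick $\tilde\varepsilon_n$ with $\lim_N g_N(\tilde\varepsilon_n) \leq 1/n$, choose $N_n \uparrow \infty$ such that $g_N(\tilde\varepsilon_n) \leq 2/n$ for all $N\geq N_n$, and set $\varepsilon_N := \tilde\varepsilon_n$ on $[N_n, N_{n+1})$. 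This is the only genuinely non-mechanical step in the argument.

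With $\varepsilon_N$ in hand I set
\begin{align*}
T^{*}(N) := \min\bigl\{ N^{1/8},\, h(N)^{-1/2}\bigr\},
\end{align*}
which satisfies $T^{*}(N)\to \infty$ and $N^{-1/4} T^{*}(N) \leq N^{-1/8}\to 0$. Plugging $\varepsilon = \varepsilon_N$ and $T = T^{*}(N)$ into \eqref{mismatch bound 5} bounds the four summands by $C N^{-1/4}$, $C\sqrt{h(N)}$ (using $T^{*}(N)\,h(N) \leq h(N)^{1/2}$), $C K N^{-3/8}$, and $C N^{-1/8}$ respectively, all of which vanish as $N\to \infty$. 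I do not anticipate any hidden obstacle: the heavy lifting --- the mismatch-rate identity \eqref{mismatch rate}, the variance bounds of Lemma~\ref{lem:condexpect XY bounds}, and the Poisson--Dirichlet moment estimates of Lemma~\ref{lem:eps-to-zero-N-to-infty} --- has already been carried out in the preceding subsections, so what is left here is only the diagonal extraction of $\varepsilon_N$ and the routine balancing of powers of $N$.
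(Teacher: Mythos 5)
Your proof supplies exactly the details that the paper leaves implicit (the paper only states ``From \eqref{mismatch bound 5} and \eqref{sum-xi-small-eps}, \eqref{sum-sqrt-xi-large-eps}, \eqref{sum-sqrt-xi-squared-large-eps} we conclude''), and the structure --- the observation that $\varepsilon$ must be sent to $0$ jointly with $N\to\infty$, the diagonal extraction of $\varepsilon_N$, and the balancing $T^{*}(N)=\min\{N^{1/8},h(N)^{-1/2}\}$ --- is the right one. The one assertion that is not literally given by Lemma~\ref{lem:eps-to-zero-N-to-infty} is your claim that the factor $\sum_{i<j}\expect{\sqrt{\xi^N_i\xi^N_j}\one_{\{\xi^N_j\geq\varepsilon\}}}$ is bounded ``uniformly in $N$ and $\varepsilon$''. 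What \eqref{sum-sqrt-xi-squared-large-eps} actually states is a finite \emph{iterated} limit $\lim_{\varepsilon\to0}\lim_{N\to\infty}$; for each fixed $\varepsilon$ the $N$-limit is bounded (and so is the full sequence, but with a bound that a priori may depend on $\varepsilon$), while the monotone $\varepsilon\to0$ limit of the $N$-limits is finite. A joint $\sup_{N,\varepsilon}$ does not formally follow from the iterated-limit statement, and the lemma's proof only invokes the $\varepsilon$-dependent deterministic bound $\sum_j\sqrt{\xi^N_j}\one_{\{\xi^N_j\geq\varepsilon\}}\leq\varepsilon^{-1/2}$. This is easily patched, and two routes are natural: (i) run the same diagonal extraction you already use for $g_N$ simultaneously for this quantity, choosing $N_n$ large enough that $\expect{\sum_{i<j}\sqrt{\xi^N_i\xi^N_j}\one_{\{\xi^N_j\geq\tilde\varepsilon_n\}}}\leq\expect{\big(\sum_j\sqrt{\xi_j}\big)^2}+1$ for all $N\geq N_n$, which then gives a genuine uniform constant along the diagonal sequence $\varepsilon_N$; or (ii) use the deterministic bound $\sum_{i<j}\sqrt{\xi^N_i\xi^N_j}\one_{\{\xi^N_j\geq\varepsilon\}}\leq\varepsilon^{-1}$ and arrange the diagonal so that $\varepsilon_N^{-1}N^{-1/2}T^{*}(N)\to0$, which is harmless since $\varepsilon_N$ may decay as slowly as desired. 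With either fix the balancing goes through exactly as you computed, and the corollary follows.
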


Finally, Theorem \ref{thm:coupl} follows from Corollaries \ref{cor:smalldistance} and \ref{cor:mismatch time} with $T^{*}(N)$ as in Corollary~\ref{cor:mismatch time}. 
}
	\subsection{A sketch of a direct approach using 
	Gr\"onwall's 
	inequality}  
	\label{sub:Gronwall} 
{	
	We continue to employ the simultaneous coupling 
	construction of the processes $\lb \eta^N (t ) , 
	 \zeta^N (t )\rb$ as introduced 
	in Subsection~\ref{sub:coupling}, and consider 
	$\xi^N (t ) = {\bf p}\lb \eta^N (t )\rb $.} 
	In particular, 
	$\eta^{N}(t)$ is stationary and reversible with respect to the uniform measure $\mu^{N}$ on $\Sigma^{N}$, and $\zeta^N(t)$ is stationary and reversible with respect to the
	Ewens's measure $\pi^N$ in \eqref{Ewens}. Furthermore, 
	at time zero  $\zeta^N (0) = { \xi}^N (0)$. In the sequel, 
	$\P$ and $\E$ denote the distribution and  the expectation of the process 
	$\lb \eta^N (\cdot  ) , \zeta^N (\cdot )\rb$, and 
	$\cF_t$ is the $\sigma$-algebra generated by 
	$\lbr \lb \eta^N (s   ) , \zeta^N (s )\rb\rbr_{s\in [0,t]}$. 
	
	Let us introduce the following notation: 
	\begin{align*} 
	\delta^N (t ) &=  \max_{s\leq t} d\lb \xi^N (s) , 
	\zeta^N (s )\rb\\ 
	{ \wt\delta^N} (t ) &= 
	\sum_{0\leq s\leq t} \Delta_+  
	d\lb \xi^N (s) , 
	\zeta^N (s )\rb , 
	\end{align*}
	where, for a piecewise constant cadlag function 
	$f$ we set  
\begin{equation*}	
\Delta_+ f (s ) = \max \lbr f (s ) - f (s^-) , 0\rbr .
\end{equation*}
	 Clearly , 
	\be{eq:delta-tdelta} 
	\delta^N (t ) \leq 
	{ \widetilde\delta^N} (t ) .
	\ee
	We claim that 
	\begin{proposition} 
		\label{prop:Gronwall} 
		There exists $C<\infty$ such that 
		\be{eq:Gronwall} 
		\E\lb  
		{ \widetilde\delta^N} (t )\rb \leq 
		C\lb \int_0^t \E \lb   
		{\widetilde\delta^N} (s )\rb
		{\rm d} s +\frac{t}{N^{1/4}}\rb 
		\ee 
		for all $N$ and $t$. 
		\end{proposition}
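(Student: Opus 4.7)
The plan is to derive a differential inequality for $t \mapsto \E\bigl(\wt\delta^N(t)\bigr)$ by bounding the instantaneous expected rate of upward jumps in $d(\xi^N(t),\zeta^N(t))$, and then to integrate. Under the coupling of Subsection~\ref{sub:coupling}, each jump of the pair $(\eta^N,\zeta^N)$ falls into one of three categories: matched merges, matched splits, and mismatched events (the second alternatives of \eqref{merge-choice} and \eqref{split-choice}, or the first two alternatives of \eqref{compensate-choice}). In each category I would bound the positive part of the jump in $d(\xi^N,\zeta^N)$ using Lemma~\ref{lem:d-jumps}, and then take expectations using the stationarity of $\eta^N$ and the conditional-expectation bounds of Lemma~\ref{lem:condexpect XY bounds}.

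For matched merges, the contraction \eqref{merge-dist} shows that $d$ cannot increase at all. For matched splits, \eqref{split-dist} gives an increase of at most $2|k-l|/N$, where $k$ and $l$ are the split positions in $\xi^N$ and $\zeta^N$; the definition \eqref{w-def} of the weights forces $|k-l|\le M$ once the cycle size is at least $M+2$, so with the choice $M=N^{1/2}$ each matched split contributes at most $2\,N^{-1/2}$, and because the total $\eta^N$-split rate is bounded by $1$, the matched-split contribution to $\tfrac{d}{dt}\E(\wt\delta^N(t))$ is $O(N^{-1/2})$. For mismatched events, the bounds \eqref{merge-mis}--\eqref{split-mis} give a per-event increase of $d$ of the form $\xi^N_j+\zeta^N_j$ or $(\xi^N_i+\zeta^N_i)/2$, hence at most the universal constant $2$; the total mismatch rate is exactly $\varrho^N(t)$ from \eqref{mismatch rate}, so the mismatch contribution to $\tfrac{d}{dt}\E(\wt\delta^N(t))$ is at most $C\,\E(\varrho^N(t))$.

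To close the loop I would appeal to \eqref{mismatch bound 1}, which bounds $\varrho^N(t)$ by $\sum_{i,j}|\wt X^N_{i,j}(t)|+\sum_{j,k}|\wt Z^N_{j,k}(t)|+13\,d(\xi^N(t),\zeta^N(t))$. By the stationarity of $\eta^N$, Lemma~\ref{lem:condexpect XY bounds}, and the uniform moment bound \eqref{sum-sqrt-xi-squared-large-eps} of Lemma~\ref{lem:eps-to-zero-N-to-infty}, one has $\E\bigl(\sum_{i,j}|\wt X^N_{i,j}|\bigr)=O(N^{-1/2})$ and $\E\bigl(\sum_{j,k}|\wt Z^N_{j,k}|\bigr)=O(N^{-1/4})$; moreover, since $d(\xi^N(0),\zeta^N(0))=0$ and downward jumps only subtract, $d(\xi^N(t),\zeta^N(t))\le \wt\delta^N(t)$. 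Putting these inputs together yields
\begin{equation*}
\frac{d}{dt}\,\E\bigl(\wt\delta^N(t)\bigr) \;\le\; C\,N^{-1/4}+C\,\E\bigl(\wt\delta^N(t)\bigr),
\end{equation*}
and integrating from $0$ to $t$ with $\wt\delta^N(0)=0$ gives \eqref{eq:Gronwall}.

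The main obstacle I anticipate is the careful bookkeeping of the four subcases of mismatched events (merges versus splits, generated by $\eta^N$ versus by the compensating Poisson flow $\nu^\prime$), together with the degenerate behaviour of the smoothing weights $w_m(k,l)$ when the cycle size $m$ is below $M+2$: there the constraint $|k-l|\le M$ is vacuous, but the total rate of splits of such short cycles is itself small, and I would absorb that contribution into the $O(N^{-1/4})$ error using the bound $\sum_j \xi^N_j\,\one_{\{\xi^N_j<M/N\}}\le M/N\cdot\#\{\text{cycles}\}$ and a control on the number of macroscopic cycles coming from Ewens's formula.
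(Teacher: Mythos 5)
Your Gr\"onwall framework and your treatment of matched merges and matched splits coincide with the paper's: \eqref{merge-dist} gives contraction for matched merges, \eqref{split-dist} plus the weight structure \eqref{w-def} gives a per-event increment of at most $2M/N$ for matched splits, and with $M=\sqrt{N}$ and rate $O(1)$ that contributes $O(N^{-1/2})$. Where you genuinely diverge is in the mismatch term, and there lies a gap.

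You bound the per-event increase of $d(\xi^N,\zeta^N)$ at a mismatched event by the universal constant $2$ and then multiply by the total mismatch rate $\varrho^N(t)$ via \eqref{mismatch bound 1}. This forces you to establish $\E\bigl(\sum_{i<j}|\wt X^N_{i,j}|\bigr)=O(N^{-1/2})$, which after Lemma~\ref{lem:condexpect XY bounds} reduces to showing $\E\bigl(\sum_{i<j}\sqrt{\xi^N_i\xi^N_j}\bigr)=O(1)$ uniformly in $N$. You cite \eqref{sum-sqrt-xi-squared-large-eps} as a ``uniform moment bound'', but that display is only an iterated-limit statement ($N\to\infty$ then $\varepsilon\to 0$) whose proof uses the cutoff $\one_{\{\xi^N_j\ge\varepsilon\}}$ and the bound $\sum_j\sqrt{\xi^N_j}\,\one_{\{\xi^N_j\ge\varepsilon\}}\le\varepsilon^{-1/2}$; it gives neither a cutoff-free bound nor uniformity in $N$. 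The quantity you need \emph{is} bounded uniformly in $N$ (this follows from the exact Ewens second-moment formulas $\E(a_k a_{k'})=\one_{\{k+k'\le N\}}/(kk')$ etc.), but that requires a separate calculation not present in the paper. As stated, your proof invokes a lemma for something it does not assert.

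The paper's own Gr\"onwall proof sidesteps this entirely by \emph{not} discarding the size factor in the jump: by \eqref{merge-mis} a merge mismatch increases $d$ by at most $\xi^N_j+\zeta^N_j$ and by \eqref{split-mis} a split mismatch by at most $(\xi^N_i+\zeta^N_i)/2$. Retaining this factor, and using $\xi^N_j\le\sqrt{\xi^N_i\xi^N_j}$ for $i<j$, the relevant expectation becomes $\E\bigl(\sum_{i<j}|\wt X^N_{i,j}|\sqrt{\xi^N_i\xi^N_j}\bigr)$, which is controlled by Cauchy--Schwarz in $(i,j)$ against the \emph{deterministic} bound $\sum_{i<j}\xi^N_i\xi^N_j\le\tfrac12$ and the conditional variance estimate \eqref{eq:tX-var}. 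That yields $O(N^{-1/2})$ with no dependence on any cycle-count moment. So: either keep the $\xi^N_j$-weight as in the paper (this is the cleaner and intended route and uses \eqref{merge-mis}--\eqref{split-mis}, which you never invoke), or supply the missing uniform second-moment estimate for $\sum_j\sqrt{\xi^N_j}$ under Ewens directly. Your final concern about short cycles in the matched-split case is, incidentally, unfounded: for $m<M+2$ the weights $w_m(k,l)$ are supported on $l\in[1,m-1]$, so $|k-l|\le m-2<M$ automatically and the per-event bound $2M/N$ still holds without any extra control on the number of short cycles.
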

	By 
	Gr\"onwall's 
	inequality and \eqref{eq:delta-tdelta} we conclude: 
	\begin{corollary} 
		There exists $C<\infty$ such that 
		\be{eq:Bound}
		\E\lb \max_{s\leq t} d\lb \xi^N (s) , 
		\zeta^N (s )\rb\rb  \leq 
		\E\lb  
		\widetilde\delta^N (t )\rb \leq  
		\frac{C^\prime t}{N^{1/4}} , 
		\ee
		for all $N$ and $t$. 
		\end{corollary}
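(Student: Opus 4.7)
The plan is to interpret $\widetilde\delta^N$ as a nondecreasing pure-jump process and compute its compensator along the coupled dynamics of Subsection~\ref{sub:coupling}. Since the only jump times of $(\eta^N,\zeta^N)$ are those of the Poisson clock $\nu''$, by the standard Dynkin/compensator formula,
\begin{align*}
\E\bigl[\widetilde\delta^N(t)\bigr] = \int_0^t \E\bigl[\mathcal R^N(s)\bigr]\,ds,
\qquad
\mathcal R^N(s) := \sum_j r_j(s)\bigl(d(\xi_j^{N,+},\zeta_j^{N,+})-d(\xi^N(s),\zeta^N(s))\bigr)_+,
\end{align*}
where the sum runs over admissible jump types $j$ of the coupling \eqref{merge-choice}--\eqref{compensate-choice}, with instantaneous rate $r_j(s)$ and post-jump state $(\xi_j^{N,+},\zeta_j^{N,+})$. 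Since $d(\xi^N(0),\zeta^N(0))=0$ and the pointwise distance is at most the accumulated positive increment, $d(\xi^N(s),\zeta^N(s))\le\widetilde\delta^N(s)$. Hence \eqref{eq:Gronwall} reduces to the pointwise estimate
\begin{align}
\label{R-bound}
\E\bigl[\mathcal R^N(s)\bigr]\;\le\;C\,\E\bigl[d(\xi^N(s),\zeta^N(s))\bigr]+CN^{-1/4}.
\end{align}

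I would establish \eqref{R-bound} by enumerating jumps by type and invoking Lemma~\ref{lem:d-jumps}. Matched merges contribute nothing by \eqref{merge-dist}. Matched splits, by \eqref{split-dist}, produce $\Delta_+ d\le 2|k-l|/N\le 2M/N$ since the weights $w_m(k,l)$ are supported on $|k-l|\le M$; with total matched-split rate $\le 1$ and the choice $M=N^{1/2}$ this contributes $O(N^{-1/2})$ per unit time. For mismatched merges of cycles $i<j$, \eqref{merge-mis} gives $\Delta_+ d\le\xi^N_j+\zeta^N_j$ with total rate $|X^N_{i,j}-U^N_{i,j}|$; for mismatched splits of cycle $i$, \eqref{split-mis} gives $\Delta_+ d\le(\xi^N_i+\zeta^N_i)/2$ with total rate $|Z^N_{i,l}-V^N_{i,l}|$.

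The mismatch contributions are then treated by the decomposition $X^N=\wh X^N+\wt X^N$ and $Z^N=\wh Z^N+\wt Z^N$ of \eqref{XNd-condfluct}--\eqref{YNd-condfluct}, with bias pieces $\wh X^N_{i,j}=U^N_{i,j}(\xi^N)$ and $\wh Z^N_{j,k}=V^N_{j,k}(\xi^N)$. The bias differences $|U^N_{i,j}(\xi^N)-U^N_{i,j}(\zeta^N)|$ and $|V^N_{j,k}(\xi^N)-V^N_{j,k}(\zeta^N)|$ are controlled by \eqref{eq:UV-estimates}, and together with the crude weight bound $\xi^N+\zeta^N\le 2$ contribute at most $C\,d(\xi^N(s),\zeta^N(s))$. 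For the fluctuation pieces one conditions on $\xi^N(s)$---stationarity of $\eta^N$ making $\eta^N(s)$ uniform on permutations with cycle structure $\xi^N(s)$---and applies Lemma~\ref{lem:condexpect XY bounds}, yielding $\condexpect{|\wt X^N_{i,j}|}{\xi^N}\le CN^{-1/2}\sqrt{\xi^N_i\xi^N_j}$ and $\sum_k\condexpect{|\wt Z^N_{j,k}|}{\xi^N}\le CN^{-1/4}\xi^N_j$. Summing against the crude weight $\le 2$, using $\sum_j\xi^N_j=1$ and the uniform-in-$N$ moment bound $\sup_N\E\bigl[(\sum_i\sqrt{\xi^N_i})^2\bigr]<\infty$ (a direct second-moment Ewens-measure computation from $\E[a_k]=1/k$), the fluctuation contributions are $O(N^{-1/2})$ from merges and $O(N^{-1/4})$ from splits, establishing \eqref{R-bound}.

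The principal technical obstacle is the careful absolute-value bookkeeping of the signed mismatches in \eqref{merge-choice}, \eqref{split-choice} and \eqref{compensate-choice}, to produce clean rate factors $|X^N-U^N|$ and $|Z^N-V^N|$ correctly paired with the cycle-size weights from Lemma~\ref{lem:d-jumps}. A further subtlety is that $\zeta^N(s)$ is \emph{not} conditionally independent of the $\eta^N$-fluctuations given $\xi^N(s)$, so $\zeta^N_j$ cannot be freely taken outside $\condexpect{\cdot}{\xi^N}$; the crude bound $\zeta^N_j\le 1$ is however sufficient to preserve the $N^{-1/4}$ rate asserted in \eqref{eq:Gronwall}, and any attempt to sharpen the exponent would require a more refined coupling-tightness argument.
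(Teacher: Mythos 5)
Your proposal follows essentially the same route as the paper: the compensator formula for the nondecreasing jump process $\widetilde\delta^N$, a case-by-case jump analysis via Lemma~\ref{lem:d-jumps} (mirroring the paper's Lemma~\ref{lem:d-growth} and its \case{0}--\case{4}), the bias-vs-fluctuation split using \eqref{eq:UV-estimates} together with Lemma~\ref{lem:condexpect XY bounds}, and the uniform-in-$N$ moment bound on $\bigl(\sum_i\sqrt{\xi^N_i}\bigr)^2$. What you actually verify is Proposition~\ref{prop:Gronwall}, i.e.\ the integral inequality \eqref{eq:Gronwall}; to obtain the Corollary one must still invoke Gr\"onwall's inequality together with $d(\xi^N,\zeta^N)\le\widetilde\delta^N$, and it is worth noting that Gr\"onwall in fact delivers $\E\bigl(\widetilde\delta^N(t)\bigr)\le \tfrac{1}{N^{1/4}}\bigl(e^{Ct}-1\bigr)$, so the linear-in-$t$ display $\tfrac{C't}{N^{1/4}}$ is only honest on a bounded time horizon with $e^{CT}$ absorbed into $C'$. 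One place where your write-up is slightly cleaner than the paper's: you explicitly acknowledge that $\zeta^N_j(t)$ is not $\sigma(\xi^N(t))$-measurable and resolve it via the crude bound $\zeta^N_j\le 1$, whereas the paper handles the same point implicitly through the decomposition $\xi^N_j+\zeta^N_j\le 2\xi^N_j+\abs{\xi^N_j-\zeta^N_j}$; both yield the same $O(N^{-1/2})$ and $O(N^{-1/4})$ contributions.
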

	Evidently, \eqref{eq:Bound} implies a somewhat quantitative version of Theorem~\ref{thm:coupl}. For the rest of this section we shall focus on sketching how 
	\eqref{eq:Gronwall} follows from the techniques and ideas
	developed in Subsections~\ref{sub:mismatch}-\ref{sub:comp}. 
	{We will, however, not spell out all details of the proof.}

	Recall our construction of coupling in Subsection~\ref{sub:coupling}. In particular recall that in the notation introduced therein jumps of either $\xi^N$ or $\zeta^N$ can occur only at arrival times 
	$\lb \theta_n^{\prime\prime}\rb$  of  Poisson proccess $\nu^{\prime\prime}$.   Let $t\in \lb \theta_n^{\prime\prime}\rb$, and 
	let us rely on Lemma~\ref{lem:d-jumps} for pinning down  possible expressions for 
	$\Delta_+ \widetilde{\delta}^N (t ) = \Delta_+ d \lb \xi^N (t ), \zeta^N (t )\rb$, and we shall use notation introduced in Subsection~\ref{sub:mismatch} for writing down expressions for instantaneous rates of occurence of the corresponding jumps. There are several
	cases to be recorded: \\
	\case{0} Neither $\eta^N$  nor $\zeta^N$  jumps. Then, 
	$\Delta_+ \widetilde{\delta}^N (t ) = 0$. \\ 
	\case{1} Matched merging of $\mathsf{M}_{ij}$ type. 
	In this case, 
	due to \eqref{merge-dist},
	$\Delta_+ \widetilde{\delta}^N (t ) = 0$. 
	\\
	\case{2}  
	Matched splittings of $\lb \mathsf{S}_i^u , \mathsf{S}_i^v\rb$
	type. In this case, 
	due to \eqref{split-dist}, 
 $\Delta_+ \widetilde{\delta}^N (t )\leq  2M/N$, which, due to our choice $M = \sqrt{N}$ below, 
	is just $2N^{-1/2}$.  The instantaneous rate of matched 
	 splittings of $\lb \mathsf{S}_i^u , \mathsf{S}_i^v\rb$
	type is at most $2$.  
\\ 
\case{3}  Mismatches of $\mathsf{M}_{ij}$ type. In view of \eqref{merge-mis}, in this case 
$\Delta_+ \widetilde{\delta}^N (t )\leq \xi^N_j (t) + \zeta^N_j (t )$. 
By construction the instantaneous rate of the $\mathsf{M}_{ij}$
mismatch is bounded above by 
\begin{equation*} 
 \abs{X_{i,j}^N(t ) - U_{i,j}(\zeta^N (t))} \leq  
 \abs{X_{i,j}^N(t ) - 
 	\condexpect{X_{i,j}^N (t )}{\xi^N (t )}} 
 + \abs{U_{i,j}^N(\xi^N (t))
 	- U_{i,j}^N(\zeta^N (t))}  .
\end{equation*}
\case{4} Mismatches of $\mathsf{S}_i^u$ type. By \eqref{split-mis}, in this case $\Delta_+ \widetilde{\delta}^N (t )\leq \lb  \xi^N_i (t) + \zeta^N_i (t )\rb /2$. For $u \in 
\lbr k/( N\xi^N_i (t )) , k/( N\zeta^N_i (t ))\rbr $ such mismatches 
occur at  instantaneous rates  
\begin{equation*} 
\abs{Z_{j,k}^N (t ) -V^{N}_{j,k}\lb \zeta^N (t)\rb} \leq 
\abs{Z_{j,k}(t ) - 
	\condexpect{Z_{j ,k}^N(t )}{\xi^N (t )}} 
+ \abs{Z_{j, k}^N(\xi^N (t))
	- Z_{j ,k}^N(\zeta^N (t))}  .
\end{equation*}
We conclude: 
\begin{lemma} 
\label{lem:d-growth} 
The following upper bound on instantaneous growth 
of $\widetilde{\delta}^N (t )$ holds: 
\begin{align} 
\notag
&\limsup_{h\to 0} \frac{1}{h}\E\lb 
\widetilde{\delta}^N (t + h ) - \widetilde{\delta}^N (t )~\big| ~ \cF_t\rb 
 \leq \frac{4M}{N} \\ 
 \label{Case3}
&\quad + 
\sum_{i <j} \lb \abs{X_{i,j}^N(t ) - 
	\condexpect{X_{i,j}^N (t )}{\xi^N (t )}} 
+ \abs{U_{i,j}^N(\xi^N (t))
	- U_{i,j}^N(\zeta^N (t))} \rb
\lb \xi^N_j (t ) + \zeta^N_j (t )\rb\\
\label{Case4}
&\quad 
+ 
\sum_{j , k} \lb \abs{Z_{j,k}^N (t ) - 
	\condexpect{Z_{j , k}^N(t )}{\xi^N (t )}} 
+ \abs{Z_{j , k}(\xi^N (t))
	- Z_{j, k}(\zeta^N (t))} \rb
\frac{\xi^N_j (t ) +\zeta_j^N (t )}{2} .
\end{align} 
\end{lemma}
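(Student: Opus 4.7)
The plan is to apply the case analysis set up in the paragraphs preceding Lemma~\ref{lem:d-growth} and to read off the instantaneous growth rate of $\widetilde\delta^N$ by multiplying, for each event type at a $\nu''$-arrival, the corresponding rate by the worst-case positive increment of $d(\xi^N,\zeta^N)$. Since $\widetilde\delta^N$ is a non-decreasing pure-jump process whose jumps can occur only at arrival times of the rate-$2$ Poisson process $\nu''$, the standard intensity formula
\begin{equation*}
\limsup_{h\to 0}\frac{1}{h}\E\lb\widetilde\delta^N(t+h)-\widetilde\delta^N(t)\,\big|\,\cF_t\rb
=\sum_{\alpha}r_\alpha(t)\,g_\alpha(t)
\end{equation*}
holds, where $\alpha$ ranges over Cases~0--4 and $r_\alpha$, $g_\alpha$ denote respectively the instantaneous rate and the positive increment of an $\alpha$-type jump.

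Cases~0 and~1 contribute nothing: in Case~0 neither process moves, and in Case~1 the bound \eqref{merge-dist} forces $g_\alpha=0$. For Case~2 the essential point is that the support of the smoothening weights in \eqref{w-def} forces the labels $k$ and $l$ of the matched pair to satisfy $|k-l|\le M$; consequently \eqref{split-dist} bounds the increment by $2|k-l|/N\le 2M/N$, and since the total jump rate of $\nu''$ is at most~$2$, Case~2 contributes at most $4M/N$, matching the first term in the claim.

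For Case~3, combining the two $\mathsf{M}_{ij}$-mismatch subcases---the $\eta^N$-merge-only branch of \eqref{merge-choice}, firing at instantaneous rate $(X_{i,j}^N(t)-U_{i,j}^N(\zeta^N(t)))_+$, and the $\zeta^N$-merge-only branch of \eqref{compensate-choice}, firing at rate $(U_{i,j}^N(\zeta^N(t))-X_{i,j}^N(t))_+$---yields a combined rate of exactly $|X_{i,j}^N(t)-U_{i,j}^N(\zeta^N(t))|$, while \eqref{merge-mis} bounds each such increment by $\xi_j^N(t)+\zeta_j^N(t)$. Inserting $\pm U_{i,j}^N(\xi^N(t))=\pm\condexpect{X_{i,j}^N(t)}{\xi^N(t)}$ (Lemma~\ref{lem:condexp-phipsi}) and applying the triangle inequality then produces~\eqref{Case3}. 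Case~4 is handled identically, using \eqref{split-mis} for the increment, combining the two mismatched-split branches of \eqref{split-choice} and \eqref{compensate-choice} to obtain the combined rate $|Z_{j,k}^N(t)-V_{j,k}^N(\zeta^N(t))|$, and invoking the identity $\condexpect{Z_{j,k}^N(t)}{\xi^N(t)}=V_{j,k}^N(\xi^N(t))$ which follows from Lemma~\ref{lem:condexp-phipsi} and the normalization $\sum_l w_m(k,l)=1$.

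The argument is really just careful bookkeeping of the rates and increments already spelled out in the coupling of Subsection~\ref{sub:coupling}; the only conceptually delicate point---already the very reason the weights $w_m(k,l)$ were introduced in \eqref{w-def}---is that the cutoff $|k-l|\le M$ on the split label keeps the Case~2 increment uniformly of order $M/N$ rather than $O(1)$, so that after pairing with the rate bound~$2$ the contribution remains controllably small. No further ingredient beyond Lemmas~\ref{lem:d-jumps} and~\ref{lem:condexp-phipsi} is required.
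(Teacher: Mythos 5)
Your proposal is correct and follows essentially the same route as the paper: the paper's ``proof'' of Lemma~\ref{lem:d-growth} is exactly the Case~0--4 bookkeeping you reproduce (with the rate of each mismatched branch read off from \eqref{merge-choice}, \eqref{split-choice}, \eqref{compensate-choice}, the increments from Lemma~\ref{lem:d-jumps}, the triangle-inequality insertion of $U^N_{i,j}(\xi^N)=\condexpect{X^N_{i,j}}{\xi^N}$ and $V^N_{j,k}(\xi^N)=\condexpect{Z^N_{j,k}}{\xi^N}$ via Lemma~\ref{lem:condexp-phipsi}, and the $4M/N$ term from pairing the $2M/N$ increment bound with the crude total jump rate $\leq 2$). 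One small imprecision worth noting: the matched-split label $l$ in \eqref{split-choice} actually satisfies $\min\lbr\abs{k-l},\abs{N\xi^N_i-k-l}\rbr\le M$ rather than $\abs{k-l}\le M$ outright, but since $\mathsf{S}_i^u=\mathsf{S}_i^{1-u}$ one may replace $k$ by $N\xi^N_i-k$ in \eqref{split-dist}, so the $2M/N$ bound and hence the $4M/N$ term are unaffected.
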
 
Indeed, the three terms  on the right hand side above  correspond to \case{2}-\case{4} just discussed. 
 Let us derive upper bounds on the $\E$-expectations of the sums \eqref{Case3} and \eqref{Case4}. 

\noindent
{\bf Upper bound on the $\E$-expectation of \eqref{Case3}.} 
Let us start with the second term in \eqref{Case3}. By the first of \eqref{eq:UV-estimates}
\begin{equation*} 
\sum_{i <j} 
\abs{U_{i,j}^N(\xi )
	- U_{i,j}^N(\zeta ) }
\lb \xi_j  + \zeta_j \rb \leq 
2\sum_{i <j} 
\abs{U_{i,j}^N(\xi )
	- U_{i,j}^N(\zeta ) } \leq 
\frac{12N}{N-1} d (\xi , \zeta ). 
\end{equation*}
Next, as far as the expectation of the first summand in \eqref{Case3} is concerned, note that 
\begin{align*} 
\notag
\sum_{i <j} &\abs{X_{i,j}^N(\eta^N ) - 
	\condexpect{X_{i,j}^N (\eta^N )}{\xi^N}}
\lb \xi^N_j  + \zeta^N_j \rb 
\\
\notag
&\leq 
\sum_{i <j} \abs{X_{i,j}^N - 
		\condexpect{X_{i,j}^N }{\xi^N}}
	\lb 2\xi^N_j  + \abs{\xi^N_j - \zeta^N_j} \rb 
	\\
	&
	\leq 2\sum_{i <j}  \abs{X_{i,j}^N - 
			\condexpect{X_{i,j}^N }{\xi^N}} 
		\sqrt{\xi^N_i \xi^N_j} + 2 d (\xi^N , \zeta^N).
\end{align*} 
Hence,  recalling that $\sum_i \xi^N_i =1$, we infer by Cauchy-Schwarz and \eqref{eq:tX-var} that   
\begin{equation*}
\E \lb 
\sum_{i <j}  \abs{X_{i,j}^N (t ) - 
	\condexpect{X_{i,j}^N (t )}{\xi^N (t )}} 
\sqrt{\xi^N_i (t ) \xi^N_j (t )}\rb \leq \sqrt{\frac{C^\prime}{N}} .
\end{equation*}
Putting these bounds together we conclude that 
the $\E$-expectation of the expression in \eqref{Case3} is bounded above as 
\begin{align} 
\label{eq:Case3-bound} 
&\E\lb 
\sum_{i <j} \lb \abs{X_{i,j}^N(t ) - 
	\condexpect{X_{i,j}^N (t )}{\xi^N (t )}} 
+ \abs{U_{i,j}^N(\xi^N (t))
	- U_{i,j}^N(\zeta^N (t))} \rb
\lb \xi^N_j (t ) + \zeta^N_j (t )\rb
\rb \\
\notag 
&\quad \leq 
\frac{2\sqrt{C^\prime}}{\sqrt{N}} + 15 \E 
\lb  d (\xi^N (t ) , \zeta^N (t ))\rb  \leq 
\frac{2\sqrt{C^\prime}}{\sqrt{N}} + 15 \E \lb \widetilde{\delta}^N (t )\rb .
\end{align}
{\bf Upper bound on the $\E$-expectation of \eqref{Case4}.} 
By the second of \eqref{eq:UV-estimates}, 
\begin{align*}
\sum_{j , k}\abs{Z_{j , k}(\xi^N (t))
	- Z_{j, k}(\zeta^N (t))} 
\frac{\xi^N_j (t ) + \zeta^N (t )}{2} &\leq 
\sum_{j , k} \abs{Z_{j , k}(\xi^N (t))
	- Z_{j, k}(\zeta^N (t))} \\ 
\notag 
& \leq 7 d (\xi^N (t ) , \zeta^N (t )) 
\end{align*}
On the other hand in view of 
\eqref{condexpect Z bound}, 
\begin{align*} 
&\E \lb \sum_{j , k} \abs{Z_{j,k}^N (t ) - 
	\condexpect{Z_{j , k}^N(t )}{\xi^N (t )}} 
\frac{\xi^N_j (t ) + \zeta^N_j (t )}{2}
 \rb
\\ 
&\leq 
\notag 
\E\lb \frac{C}{N^{1/4}} \sum_j \frac{\xi^N_j (t ) \lb 
	\xi^N_j (t ) + \zeta^N_j (t )\rb}{2}\rb 
\leq \frac{C}{N^{1/4}}. 
\end{align*}
Putting these bounds together we conclude that 
the $\E$-expectation of the expression in \eqref{Case4} is bounded above as 
\begin{align} 
\label{eq:Case4-bound} 
&\E\lb 
\sum_{j , k} \lb \abs{Z_{j,k}^N (t ) - 
	\condexpect{Z_{j , k}^N(t )}{\xi^N (t )}} 
+ \abs{Z_{j , k}(\xi^N (t))
	- Z_{j, k}(\zeta^N (t))} \rb
\frac{\xi^N_j (t ) +\zeta_j^N (t )}{2} \rb \\ 
\notag
&
\qquad \qquad 
\leq
\frac{C}{N^{1/4}} + 
15 \E \lb \widetilde{\delta}^N (t )\rb .
\end{align} 
{\bf Proof of Proposition~\ref{prop:Gronwall}.} 
Readily follows from Lemma~\ref{lem:d-growth} and upper bounds
\eqref{eq:Case3-bound}, \eqref{eq:Case4-bound}.\qed

\appendix
\section{Exact formulas for conditional covariances}
\label{app:A}
The computations behind the formulas listed below are based 
on the fact that under the uniform measure $\mu^N$ on $\Sigma^N$ the conditional on cycle structure $\xi^N$  distribution of vertices into particular cycles is 
multinomial 
$\mathsf{Multi}\lb N ; \xi^N_1 , \xi^N_2 , \dots\rb$.
We will use the following Union Jack partition of the set of pairs of integers $U_m:=\{1,\dots,m-1\}\times\{1,\dots,m-1\}$
\begin{align*}
&
C_m:=\{(k,l)\in U_m : k=l=m/2\}, 
\\
&
A_m:=\{(k,l)\in U_m: k=l\not=m/2\}\cup\{(k,l)\in U_m: k=m-l\not=m/2\}, 
\\
&
G_m:=\{(k,l)\in U_m: k\not=l=m/2\}\cup\{(k,l)\in U_m: l\not=k=m/2\},
\\
&
R_m:=U_m\setminus(C_m\cup A_m \cup G_m).
\end{align*}
The symbols $U$, $C$, $A$, $G$ and $R$ denote in turn \emph{Union Jack}, \emph{Centre}, \emph{St Andrew's Cross}, \emph{St George's Cross},  and \emph{The Rest}. 

\begin{lemma}
	\label{lem:cond-rates}
	\begin{align}
	\label{cond-phi-squared}
	&
	\condexpect{\varphi^{N}_{i,j,\mathsf{b}}\varphi^{N}_{i,j,\mathsf{c}}}{\xi^N}
	=
	\\
	\notag
	&
	\qquad
	=
	\begin{cases} 
	\begin{aligned}
	\frac{2N}{N-1} 
	\xi^N_i \xi^N_j
	\end{aligned}
	&\text{if } 
	\abs{\mathsf{b}\cap\mathsf{c}}=2,
	\\[10pt]
	\begin{aligned}
	\frac{N^2}{(N-1)(N-2)}
	\xi^N_i \xi^N_j\left(\xi^N_i +\xi^N_j -\frac{2}{N}\right)  
	\end{aligned}
	&\text{if } 
	\abs{\mathsf{b}\cap\mathsf{c}}=1,
	\\[10pt]
	\begin{aligned}
	\frac{4N^3}{(N-1)(N-2) (N-3)}
	\xi^N_i \xi^N_j\left(\xi^N_i-\frac{1}{N}\right)\left(\xi^N_j-\frac{1}{N}\right)
	\end{aligned}
	&\text{if } 
	\abs{\mathsf{b}\cap\mathsf{c}}=0.
	\end{cases}
	\end{align}
	\begin{align}
	\label{cond-psi-squared}
	&
	\condexpect{\psi^{N}_{i,l,\mathsf{b}}\psi^{N}_{i,l^{\prime},\mathsf{c}}}{\xi^N}
	=
	\one_{\{ N\xi^{N}_i\geq \max\{l,l^{\prime}\}+1\}}
	\times
	\\
	\notag
	&
	\qquad
	\times
	\begin{cases} 
	\begin{aligned}
	&
	\frac{\xi^N_i}{N-1} 
	\big(
	\one_{\{(l,l^\prime)\in C_{N\xi^{N}_i}\}}
	+
	\frac12
	\one_{\{(l,l^\prime)\in A_{N\xi^{N}_i}\}}
	\big)
	\end{aligned}
	&\text{if } 
	\abs{\mathsf{b}\cap\mathsf{c}}=2,
	\\[20pt]
	\begin{aligned}
	&
	\frac{\xi^N_i }{(N-1)(N-2)}
	\big(
	\frac12
	\one_{\{(l,l^\prime)\in A_{N\xi^{N}_i}\}}
	+
	\one_{\{(l,l^\prime)\in G_{N\xi^{N}_i}\}}
	+
	\one_{\{(l,l^\prime)\in R_{N\xi^{N}_i}\}}
	\big)
	\end{aligned}
	&\text{if }
	\abs{\mathsf{b}\cap\mathsf{c}}=1, 
	\\[20pt]
	\begin{aligned}
	&
	\frac{N(\xi^N_i)^2}{(N-1)(N-2) (N-3)}
	-
	\frac{\xi^N_i}{(N-1)(N-2) (N-3)}\times
	\\[10pt]
	&
	\big(
	2
	\one_{\{(l,l^\prime)\in C_{N\xi^{N}_i}\}}
	+
	3
	\one_{\{(l,l^\prime)\in A_{N\xi^{N}_i}\}}
	+
	4
	\one_{\{(l,l^\prime)\in G_{N\xi^{N}_i}\}}
	+
	4
	\one_{\{(l,l^\prime)\in R_{N\xi^{N}_i}\}}
	\big)
	\end{aligned}
	&\text{if } 
	\abs{\mathsf{b}\cap\mathsf{c}}=0. 
	\end{cases}
	\end{align}
\end{lemma}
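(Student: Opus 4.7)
The plan is to exploit the description of the uniform measure $\mu^N$ conditional on the ordered cycle structure $\vl(\sigma)=\vl$. Such a $\sigma$ can be sampled by a two-step procedure: first place the $N$ labels into cycles, i.e.\ choose an ordered partition into labeled blocks of sizes $l_1,l_2,\dots$ uniformly at random; then, independently in each cycle, draw a uniformly random cyclic order. Under this description, the probability that specified distinct vertices $v_1,\dots,v_r$ fall into specified cycles $\cC_{j_1},\dots,\cC_{j_r}$ is a ratio of falling factorials $l_{j_1}^{(m_1)}\cdots/N^{(r)}$ (where $N^{(r)}=N(N-1)\cdots(N-r+1)$), and conditional on that placement the cyclic positions of the vertices inside each cycle are uniform.

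For \eqref{cond-phi-squared}, $\varphi^N_{i,j,\mathsf{b}}$ depends only on cycle membership, not on cyclic order. I would case-split on $\abs{\mathsf{b}\cap\mathsf{c}}\in\{0,1,2\}$ and enumerate the compatible assignments of the (at most four) endpoints to $\cC_i,\cC_j$. The $\abs{\mathsf{b}\cap\mathsf{c}}=2$ case reduces $\varphi^2$ to $\varphi$ and gives the formula already recorded in Lemma~\ref{lem:condexp-phipsi}. For $\abs{\mathsf{b}\cap\mathsf{c}}=1$ with shared vertex $v_1$ and private vertices $v_2,v_3$, only the two configurations $(v_1\in\cC_i,\ v_2,v_3\in\cC_j)$ and $(v_1\in\cC_j,\ v_2,v_3\in\cC_i)$ contribute, yielding $[l_i l_j(l_j-1)+l_j l_i(l_i-1)]/N^{(3)}$, which is the claimed expression after $l_k=N\xi^N_k$. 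For $\abs{\mathsf{b}\cap\mathsf{c}}=0$ the four independent sign-choices on the two edges give a factor of $4$ multiplying $l_i(l_i-1)l_j(l_j-1)/N^{(4)}$.

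For \eqref{cond-psi-squared} the computation is more delicate because $\psi^N_{i,l,\mathsf{b}}$ also records the cyclic distance between the two endpoints inside $\cC_i$. The strategy is the same: condition first on which cycle each of the (at most four) endpoints lies in, then use that, given $r$ specified distinct vertices placed in the same cycle of length $l_i$, their relative cyclic positions are uniform over the $(l_i-1)(l_i-2)\cdots(l_i-r+1)$ arrangements. In the $\abs{\mathsf{b}\cap\mathsf{c}}=2$ case the two endpoints of $\mathsf{b}=\mathsf{c}$ must lie in $\cC_i$ at the appropriate distance; since ``distance $l$'' and ``distance $l_i-l$'' coincide as unordered-pair events, only $(l,l')\in C_{l_i}\cup A_{l_i}$ contributes, and the prefactor $\tfrac12\one_{l\neq N\xi^N_i/2}+\one_{l=N\xi^N_i/2}$ on each $\psi$-factor collapses cleanly to the stated piecewise expression. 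In the $\abs{\mathsf{b}\cap\mathsf{c}}=1$ case the shared vertex must lie in $\cC_i$ and the two private vertices must also lie in $\cC_i$ at specified offsets $l$ and $l'$ from it; the Union-Jack partition $C_{l_i}/A_{l_i}/G_{l_i}/R_{l_i}$ is exactly what distinguishes whether the two chord-distances coincide at one endpoint, at both, or not at all, which is what determines how the prefactors combine. In the $\abs{\mathsf{b}\cap\mathsf{c}}=0$ case all four endpoints lie in $\cC_i$ and, after placing them uniformly on the cycle, an inclusion-exclusion over the possible coincidences of the distance-$l$ and distance-$l'$ chords produces the integer corrections $2,3,4,4$ in the bracket.

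The main obstacle will be the bookkeeping of the $\tfrac12$-prefactors against the Union-Jack partition in the $\psi\psi$ formulas, especially the $\abs{\mathsf{b}\cap\mathsf{c}}\in\{0,1\}$ cases. The cleanest route I see is to first compute each conditional expectation after replacing $\psi^N_{i,l,\mathsf{b}}$ by the raw indicator $\one_{\{\cC_i\stackrel{\mathsf{b},l}{\longleftrightarrow}\cC_i\}}$; observe in each case whether the second-edge event at distance $l'$ coincides with the first-edge event at distance $l$, at distance $l_i-l$, at both, or at neither (which is precisely the $C_{l_i}/A_{l_i}/G_{l_i}/R_{l_i}$ dichotomy); and only then multiply by the normalizing prefactors of each of the two $\psi$-factors. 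This reduces the lemma to a finite, mechanical tally which can be verified case by case, with no genuine probabilistic subtlety beyond the uniform placement statement recalled above.
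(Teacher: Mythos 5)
Your proposal is correct and matches the approach the paper itself indicates but does not carry out: the paper's entire proof of Lemma~\ref{lem:cond-rates} is the one-line remark that it is ``elementary, though tedious, enumerative combinatorics'' resting on the conditional-on-$\xi^N$ placement of vertices into cycles, and your two-step description (uniform placement of labels into blocks of prescribed sizes --- giving the falling-factorial ratios, which makes precise the paper's somewhat loose ``multinomial'' phrasing --- followed by a uniform cyclic order within each block) together with the case enumeration over $\abs{\mathsf{b}\cap\mathsf{c}}\in\{0,1,2\}$ and the Union-Jack partition is exactly the right way to fill in the omitted details. I spot-checked several entries (the three $\varphi\varphi$ cases, the $\abs{\mathsf{b}\cap\mathsf{c}}\in\{1,2\}$ $\psi\psi$ cases across the $C/A/G/R$ partition, and the $\abs{\mathsf{b}\cap\mathsf{c}}=0$ $\psi\psi$ case for $(l,l')\in R$) and your bookkeeping reproduces the stated formulas.
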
 
{
\begin{proof}
	[Proof of Lemma \ref{lem:cond-rates}]
	The proof of these identities is elementary - though, tedious - enumerative combinatorics. We omit the details. 
\end{proof}
}

\bigskip

\subsection*{Acknowledgements:}
This work was supported by the Israeli Science Foundation grant 765/18 (for DI), respectively, by EPSRC (UK) Fellowship EP/P003656/1 and NKFI (HU) K-129170 (for BT).

\end{document}